\documentclass[11pt, reqno]{amsart}
\usepackage[margin=2.75cm]{geometry}

\usepackage{import}
\usepackage{common_package,random_graph,Gibbs}
\usepackage{soul}

\def\stackrel#1#2{\mathrel{\mathop{#2}\limits^{#1}}}

\let\oldref\ref
\renewcommand{\ref}[1]{(\oldref{#1})}

\newcommand{\sbrac}[1]{\left[#1\right]}

\newcommand{\optset}{{\mathcal M}}
\newcommand{\openset}{{\mathcal O}}

\newcommand{\RRR}{{R}}
\newcommand{\BB}{\mathcal{B}}

\newcommand{\logb}[2]{\log\brac{\frac{#1}{#2}}}

\numberwithin{equation}{section}

\DeclareFontFamily{U}{mathx}{\hyphenchar\font45}
\DeclareFontShape{U}{mathx}{m}{n}{
      <5> <6> <7> <8> <9> <10>
      <10.95> <12> <14.4> <17.28> <20.74> <24.88>
      mathx10
      }{}
\DeclareSymbolFont{mathx}{U}{mathx}{m}{n}
\DeclareFontSubstitution{U}{mathx}{m}{n}
\DeclareMathAccent{\widecheck}{0}{mathx}{"71}
\DeclareMathAccent{\wideparen}{0}{mathx}{"75}

\setlist{nosep}
\newcommand{\brac}[1]{\left(#1\right)}

\newcommand{\E}{\mathbb{E}}

\usepackage{newunicodechar}
\newunicodechar{ℓ}{$\ell$}
\newunicodechar{ᵖ}{$p$}

\newtheoremstyle{break}
  {\topsep}{\topsep}%
  {\itshape}{}%
  {\bfseries}{}%
  {\newline}{}%
\theoremstyle{break}


\newcounter{parentnumber}

\title[Atypical consensus and splitting Gibbs measures on random regular graphs]{Gibbs conditioning, atypical consensus and splitting Gibbs measures on random regular graphs}\thanks{IHC, KR and SY were supported by the Office of Naval Research under the Vannevar Bush Faculty Fellowship
N0014-21-1-2887.  IHC and KR  also acknowledge support from the National Science Foundation under grant DMS-2246838, IHC acknowledges support from a GSSA scholarship from the Ministry of Education, Taiwan, IL acknowledges support from a SPRINT UTRA award from Brown University, and  SY acknowledges support from the Prime Minister Early Career Research Grant ANRF/ECRG/2024/005529/PMS}

\author{I-Hsun Chen, Ivan Lee, Kavita Ramanan, Sarath Yasodharan}

\begin{document}

\begin{abstract}
Given $n$ independent Bernoulli($p$) random variables $X_i, i = 1, \ldots, n$, representing the opinions of individuals connected by  an underlying random $\kappa$-regular graph $G_n$ with edge set $E(G_n)$, we show that when conditioned on an atypical empirical consensus $\frac{1}{n} \sum_{i = 1}^n \sum_{j:ij \in E(G_n)} X_i X_j$,   the joint distribution of the random variables converges,  as $n \rightarrow \infty$, to  an Ising measure on the infinite $\kappa$-regular tree $T^\k$ with a specific external field that depends only on the bias parameter  $p$, and  a  temperature that depends on both $p$ and the atypical consensus value.    In particular, conditional on the empirical consensus being smaller than typical, the limit is a  translation-invariant splitting ($\TIS$) antiferromagnetic Ising measure on $T^k$. When the empirical consensus is larger than typical, the conditional limit is a ferromagnetic  Ising measure on $T^\k$  with plus or minus  boundary condition if the bias is positive or negative, respectively, whereas if the bias is zero, there is a phase transition in the nature of the limit depending on whether or not  the consensus  exceeds $\k/(\k-1)$. More generally, when the random variables are independent and identically distributed (i.i.d.)  on a finite space $\X$, we show that when conditioned on  an atypical  generalized consensus $\frac{1}{n} \sum_{i = 1}^n \sum_{j:ij \in E(G_n)} h(X_i, X_j)$,  for any symmetric edge potential  $h: \X \times \X \to \R$,   the limiting joint distribution lies in the set of translation invariant splitting Gibbs measures on $T^\k$, whose vertex marginals are supported  on  a (possibly strict) subset of $\X.$ The proofs leverage a tractable form of the large deviation rate function for component empirical measures of random regular graphs with i.i.d.\! marks and Gibbs conditioning principles, and  entail careful analyses of associated non-convex constrained optimization problems. As a by-product of our results, we obtain an (asymptotic) analog  of the  maximum entropy principle for  Gibbs measures on random regular graphs. \\

    \noindent \textbf{Keywords:} large deviations; random regular graphs; Gibbs conditioning principle;  consensus; conditional limits;  neighborhood and component empirical measures;  Ising measure; ferromagnetic; antiferromagnetic; splitting Gibbs measure; Sanov's theorem; maximum entropy principle; \\
    \noindent \textbf{MSC 2020 subject classifications:} Primary 60F10, 82B20; Secondary 60G60, 82B31, 05C80  
\end{abstract}

\maketitle

\section{Introduction}

\subsection{Background and discussion of results}

In many areas of probability, one is often interested in understanding the most probable behavior of a random object conditioned on a certain rare event of interest. 
One approach to answering 
this question, at least in an asymptotic sense, is via large deviations theory.  
The classical setting is as follows: given a sequence $\{X_1, \ldots, X_n\}$ of independent and identically distributed (i.i.d.) random variables on a finite set $\X$ with common law $\nu$, one wants to understand how  this  sequence behaves given that its empirical 
 average $S_n = \frac{1}{n} \sum_{i=1}^n X_i$ deviates   from its  
typical value, $\text{mean}(\nu): = \int_{\R} x \nu (dx)$.  
  In this case, one first appeals to  Sanov's theorem \cite{San57}, which shows that the sequence of empirical measures $\nu_n = \frac{1}{n} \sum_{i=1}^n \delta_{X_i},$ $n \in \N$, where $\delta_x$ is the Dirac delta measure at $x$,  satisfies a large deviation principle (LDP) that characterizes the (logarithmic) asymptotics of probabilities of the form 
$\mathbb{P}( \nu_n \in C) \approx e^{-n \inf_{\nu' \in C} H(\nu'\|\nu)},$ for sufficiently regular subsets $C$ of probability measures on $\X$, 
with the ``rate function"   given by the usual relative entropy functional 
$H(\cdot\|\nu)$.  Then the Gibbs conditioning principle \cite{Vas80,Csi84} (see also \cite[Section 7.3]{Dembo_Zeitouni_2010}) can be applied to show that conditioned on the event $S_n  \geq c$, the  empirical measure $\nu_n$ converges,  as $n \rightarrow \infty$, to the minimizer $\nu^*$ of a  constrained   optimization problem: \[ \nu^*  = \arg\min_{\nu': \text{mean}(\nu') \geq c} H(\nu'\|\nu). \]
Due to the convexity of relative entropy and linearity of the constraints, this optimization is particularly tractable and $\nu^*$ can be identified explicitly.  
An additional change of measure argument is then required
(e.g., as in the proof of the large deviations lower bound in 
Cram\'{e}r's theorem 
\cite{Cra38},\cite[Section 2.2]{ Dembo_Zeitouni_2010}), 
 to argue that  
when conditioned on the event $S_n \geq c$,  the random variables $\{X_i, i = 1, \ldots, n\}$ 
behave asymptotically like an i.i.d. sequence with common law $\nu^*$. 
There have been numerous generalizations of this Gibbs conditioning principle,  including early extensions to empirical measures for Markov chains in  \cite{CsiCovCho87,MedNey99}.

In this paper, we consider an analog of this classical question on random regular graphs. Fix an  integer $\kappa \geq 2$ and let $G_n$ be the random $\kappa$-regular graph with vertex set $V(G_n) = \{1, \ldots, n\}$, that is,  $G_n$ is uniformly distributed on the set of $\k$-regular graphs on $n$ vertices.  
 Once again, let $\{X_i\}_{i \in \N}$  be a sequence of i.i.d.\,random variables taking values in a finite set $\X$ with law $\nu$. 
 Also assume that for every $n$, the vector 
 $X^n = (X_1, \ldots, X_n)$,  representing  vertex marks, is independent of the edge structure $E(G_n)$ of the graph $G_n.$
 Given any symmetric {\em edge potential} function $h : \X \times \X \to \R,$  we study the following question for $c \in \R:$
\begin{quote}
conditioned on the event that the empirical $h$-consensus is large, that is,  
\begin{align*}
 \frac{1}{n} \sum_{u \in V(G_n)} \sum_{v \sim u} h(X_u, X_v)  =  \frac{1}{n} \sum_{u,v = 1}^n 1_{\{uv \in E(G_n)\}} h(X_u, X_v) \geq \cc,
\end{align*}
what is the typical structure of the marked graph $(G_n, X^n)$, 
as $n \rightarrow \infty$?  
\end{quote}
 Of course, this question is most interesting  when the event  we are conditioning on is rare, that is, $\cc$ is larger than $\sref$, the limit  of the empirical $h$-consensus, $\frac{1}{n} \sum_{v \in G_n}\sum_{u \sim v} h(X_u^n, X_v^n)$, as $n \rightarrow \infty.$ 
In particular, we would like to understand whether the typical marked graph $(G_n, X^n),$ conditioned on this rare event, exhibits heterogeneity in the  marginal distributions of vertex marks,  dependency among adjacent vertex marks, long-range correlations, etc.

Our first main result (Theorem \ref{thm:Ising-phase-transition} of  Section \ref{subs:Isingresult}) answers the above question completely in the case of two-spin systems with the {\em consensus function,} that is, when  $\X = \{-1, 1\}$, $\nu$ is a Bernoulli distribution with 
parameter $p = \nu(\{1\}) \in (0,1),$ 
and $h(x,y) = m(x,y) := xy$ for $x, y \in \X.$  
In this case the typical value of the empirical consensus is  
  $\smref := c_{m,\text{ref}} = \k(2p-1)^2$, and {\em a priori} an  atypical 
  consensus could be achieved 
in various ways, for instance, by just modifying the mark distribution $\nu$, 
or by introducing some  dependence  between  spins,  or a mixture of both.
  We show that the typical structure of $(G_n, X^n),$ conditioned on an atypical empirical consensus, looks like an Ising measure on the infinite $\k$-regular tree $T^\k$ with a specific  inverse temperature and external field. Specifically, if $p\neq 1/2,$ then $(G_n, X^n),$ converges  (locally weakly) in probability to a ferromagnetic (resp. antiferromagnetic) Ising measure on $T^\k$ when conditioned on the empirical consensus being greater than $c > \smref$ (resp. less than $\cc < \smref$).    
  On the other hand, if   $p = 1/2$ (in which case $\smref = 0$), 
  and one conditions on the consensus exceeding a  positive value $\cc$, 
    there is a phase transition in the behavior of $(G_n, X^n)$. For small  $\cc$, namely when  $0 < \cc \leq c_*:= \k/(\k-1)$, the conditional limit is the unique Ising measure  on $T^\k$ with inverse temperature $\tanh^{-1}(\cc/\k)$ and no external field, whereas when $\cc$ is larger than $c_*,$  the conditional limit points are ferromagenetic  Ising measures on $T^\k$ with no external field and either a plus or minus boundary condition.  Note that $\tanh^{-1}(c_*/\k) = \tanh^{-1}(1/(\k-1))$ corresponds to the uniqueness threshold, that is, the inverse temperature below which there exists a unique Ising measure on  $T^\k$. 
 Furthermore, if $p = 1/2$ and the consensus is less than
 $\cc < 0$, then the  
  conditional limit is always an    antiferromagnetic Gibbs measure 
  with no external field.  
  It is  interesting to note  that despite the  symmetry of $\nu$  when $p = 1/2$, 
    unlike in the case of a positive consensus, 
  when the 
  consensus is negative there is no phase transition and the conditional distribution of  $(G_n, X^n)$ always  converges locally to a uniquely defined limit, even when the corresponding inverse temperature lies in the  non-uniqueness regime, that is, where  multiple antiferromagnetic Ising measures on $T^\k$ co-exist 
    (for  insight into why this is the case, see the discussion after Theorem \ref{thm:Ising-phase-transition}). 

In all regimes above, 
the value of the external field of the conditional limit is always given by $B(p) = \log(p/(1-p)),$ 
irrespective of the value of $\cc,$ 
and the inverse temperature is  such that the consensus under the corresponding Ising measure is exactly $\cc$.
As elaborated in 
Remark \ref{rmk:external-field-p}, the emergence of the Ising measure and the form of the external field show  that 
the most likely way to achieve atypical consensus is not by adding a bias to  the mark distribution, but rather by introducing correlations between  neighboring spins. 
Although it may seem natural that the most likely way to achieve a lower consensus is by increasing the probability of disagreements between neighboring spins (antiferromagnetic interaction), this is not completely obvious since there  also do exist ferromagnetic Ising measures on $T^\k$ 
that induce a consensus that is lower than typical (see Example \ref{example:ferro-lower-consensus}).   
 In Section \ref{subs:connecting-back-to-finite-graphs}, we also discuss when one can deduce the conditional structure of the  marked graph sequence $(G_n, X^n)$ from knowledge of its (conditional) local limit.

Our next result concerns the case of general spins (i.e., where $\nu$ is a distribution on an arbitrary finite set $\X$) and general edge potential $h.$ Our main result here (Theorem \ref{thm:Gibbs-deg-and-nondeg} of  Section \ref{subs:Isingresult}) shows that all conditional limit points lie in the set of translation-invariant splitting $(\TIS)$ Gibbs measures on the $\k$-regular tree (see Definition \ref{def:TISGM}). Although in general these limiting Gibbs measures could be degenerate, in the sense that their vertex marginals could be supported on a strict subset of $\X$ (as shown in  Example \ref{example:boundary-global-minimizer}),  for two-spin systems  with $\nu$ being the  uniform distribution,  we  prove that the limit points are always non-degenerate $\TIS$ Gibbs measures,  that is, the vertex marginals of the limit Gibbs measures have full support on $\X$ (see Theorem \ref{thm:Gibbs-component-two-spin-uniform}). This leaves open the interesting question of precisely identifying the conditions on  $\nu$ and $h$ under which one is guaranteed to get  non-degenerate conditional limit points.  We conjecture this is true whenever $\nu$ is the uniform distribution on any finite set $\X$ (see Conjecture \ref{conj:Gibbs-component}). 

The proofs of our main results leverage the large deviation principle (LDP) for the sequence $\{U_n\}$ of component empirical measures of random regular graphs with i.i.d.\ vertex marks obtained in \cite{RamYas23,BalPerRei26}; see Section \ref{sec:empirical-measures} for a precise definition of $U_n$. 
Then, by invoking corresponding Gibbs conditioning principles \cite{Leonard2010EntropicProjections}, the conditional limit points can be characterized as minimizers of a constrained optimization problem on measures on rooted trees (representing component empirical measures) with the objective function being the rate function, and the constraints dictated by the rare event of interest. In our analysis, by using the specific form of the rate function (stated in Theorem \ref{thm:ldp-Un}), we first reduce this  optimization problem to a constrained  optimization problem over measures on rooted trees of depth-1 (corresponding to neighborhood empirical measures). Then, by partially solving the latter problem,  we obtain a further 
reduction (in Proposition \ref{prop:optsimp}) to a constrained minimization problem over edge marginals, that is, (symmetric) probability measures on $\X \times \X.$ Throughout our analysis, we crucially use the succinct form of the rate function for the component empirical measure, in terms of sums of relative entropies, obtained in \cite[Theorem 4.13]{RamYas23}.

The remaining technical work is then devoted to the analysis of this  constrained edge minimization problem. Any minimizer $\pi_*$  that has full support on $\X \times \X$  (which we refer to as an interior minimizer) must be a critical point of the Lagrangian of the optimization problem.  This Lagrangian (perhaps
unsurprisingly) coincides with the negative of the Bethe rate function, defined in  \cite{DemMonSlySun14} for all factor models, and used therein to justify the Bethe  prediction for the limiting free energy density 
of ferromagnetic Potts models on graph sequences converging locally to $T^\k$ when $\k$ is even. 
In \cite[Proposition 1.7]{DemMonSlySun14},  the critical points of the Bethe rate function (equivalently, of the Lagrangian) 
are expressed explicitly in terms of the fixed points $\ell$ of an associated cavity map (or belief propagation recursion).  The well known fact that these fixed points act as boundary laws for corresponding 
$\TIS$  Gibbs measures (see \cite[Theorem 4.1]{Zac83})  
then shows that any interior minimizer must be 
the edge marginal of a $\TIS$  Gibbs measure (see Lemma \ref{lemma:TISGM-to-cavity}). 
Combining this with a lifting lemma (see Lemma \ref{lemma:unimodular-extension-of-splitting-Gibbs-measure}) 
that expresses $\TIS$ Gibbs measures in terms of their edge marginals via unimodular extensions, and  a parallel relation  between minimizers of the edge and component minimization problems 
(established in Proposition \ref{prop:optsimp} by exploiting the form of the rate function for the  component empirical measure  LDP obtained in \cite{RamYas23}),  
we conclude that any minimizer of the component empirical measure optimization problem 
whose edge marginal has full support must be a $\TIS$ Gibbs measure. 

However, (rather surprisingly) this is far from the end the story since {\em boundary minimizers} of the edge optimization problem  (i.e., minimizers whose support is a strict subset of $\X\times \X$) also  play a substantial role. As shown in Example \ref{example:boundary-global-minimizer}, even when $\X=\{1,-1\}$, there exist edge potentials  $h$, mark distributions $\nu$ and constraint values  $\cc$ such that the unique minimizer of the corresponding edge constrained minimization problem is a {\em boundary minimizer}. Fortunately,   under mild conditions on the constraint value $\cc$, we are able to show that  every {\em boundary minimizer} $\pi_*$ must be degenerate in the sense that the support of $\pi_*$ takes the form $\X^\downarrow\times \X^\downarrow$ for some $\X^\downarrow\subsetneq \X$ (see Lemma \ref{lemma:pos_marginal_cannot_be_locmin}). Thus, by further restricting the minimization problem to (symmetric) probability measures on $\X^\downarrow\times \X^\downarrow$, we turn $\pi_*$ into an interior minimizer of a modified edge constrained minimization problem with $\X$ replaced by $\X^\downarrow$, and invoke the above argument for interior minimizers in this modified setting to conclude that conditional limits  must be (possibly degenerate) $\TIS$ Gibbs measures.   

Unlike in the classical setting of Sanov's theorem mentioned above, the optimization problems we deal with are  non-convex, and therefore the analysis is substantially more involved. Nevertheless, in the special case of two-spin systems with the consensus function,  we completely  solve this problem in all parameter regimes, and thereby obtain the full picture  stated in Theorem \ref{thm:Ising-phase-transition}. 
It is clear from our proof that more can be said about conditional limits in the general setting under specific assumptions on $\nu$ and $h$,   by combining partial results that we obtain for multi-spin systems with further analysis of the 
corresponding optimization problem.  
 But we do not pursue this direction in this paper.

 Note that the empirical $h$-consensus can also be written as a functional of just the simpler 
 neighborhood empirical measure, 
$L_n = \frac{1}{n} \sum_{u=1}^n \delta_{(x_u, x_v, v \sim u)},$ which is a random probability measure on $\X^{1+\k}$ (or even the edge empirical measure), rather than of the component empirical measure $U_n$.    However, consideration of the LDP and associated Gibbs conditioning principle for only the sequence $\{L_n\}$ (or of edge empirical measures) would only yield information on the most likely conditional behavior of the neighborhood (or edge) marginals 
 (e.g., see \cite[Proposition 3.5]{RamYas23} for the special case when 
$h(x,y) = g(y),$ for some $g : \X \to \R$).
Here we consider the more complicated Gibbs conditioning principle  for the sequence $\{U_n\}$ of component empirical measures  so as to be able  to shed  light on the conditional behavior of the full joint distribution of  $\{X_i\}_{i \in \N}$.  The framework  developed here could also be useful for understanding large deviation properties of interacting stochastic processes on sparse graphs, which  was posed as an open problem in \cite[Section 5.2]{Ram23}.  

\subsection{A new maximum ``entropy'' principle}
Our results can also be viewed as identifying a new {\em maximum entropy principle} for Gibbs measures on random regular graphs, at least in an asymptotic sense. It is well known that
a Gibbs measure on $n$-particle configurations with spin values in a finite set ${\X}$, of the form 
\[ \mu(\sigma)=\frac{1}{Z_n}\exp \pr{\beta \calH_n(\sigma)},\quad \sigma=(\sigma_i)_{i\in [n]}\in \X^n \]
arises as the unique probability measure  on the finite set ${\mathcal Z}=\X^n$ that maximizes the 
Shannon entropy 
\begin{equation}
    \label{defn:entropy}
H(\mu) = 
- \sum_{z \in {\mathcal Z} }\mu(z) \log \mu(z), 
\end{equation}
subject to the  ``energy'' constraint $\sum_{z\in \mathcal{Z}}\mu(z)\calH_n(z)=\cc$. 
While the appearance of Shannon entropy in the 
classical maximum entropy principle is formally justified by viewing it as a measure of information, and then viewing the maximizing Gibbs measure  as the ``least biased representation of our knowledge'', 
a more rigorous justification for it being the right choice  (though only in limited cases and only in an asymptotic sense), is provided by large deviations theory. 
Then invoking Sanov's theorem   
and the associated Gibbs conditioning principle, it follows that under the energy constraint (rare event), the most likely measure is the one that minimizes this rate function, thereby maximizing the entropy, which is the Gibbs measure.  For a comprehensive treatment of this correspondence, we refer the readers to \cite{Tou09} and the references therein, as well as the pioneering works by Ellis \cite{Ell95,Ell99,Ell12}, Oono \cite{Oon89} and Lanford \cite{Lan73} (reprinted version \cite{Lan07}). 

In the present setting we analogously leverage large deviations theory to show that ($\TIS$) Gibbs measures on infinite regular trees arise as (asymptotic)  maximizers of a different entropy-type  functional, namely the negative of the large deviation rate function for the component empirical measure (see the functional $I^\k$ defined in Section \ref{sec:ldp-graph}), 
which  takes the place of the usual {\em entropy}, subject to an energy constraint expressed in terms of a consensus functional.  In contrast to the classical case, where one obtains a unique Gibbs measure as the maximizer by virtue of the  concavity of the entropy functional, in the present setting,  the non-convexity of the rate function $I^\k$ yields multiple maximizers for the new  entropy functional. 
In particular, this manifests as a phase transition in Theorem \ref{thm:Ising-phase-transition}.

\subsection{Outline. }
The rest of this paper is organized as follows. In Section \ref{sec:notation} below, we gather some common notation used in this paper. In Section \ref{sec:main-result}, we describe the mathematical model and present our main results.  
The proofs  are presented in Section \ref{sec:cond-to-opt}--Section \ref{sec:Ising-phase-transition-proof}. 
In Section \ref{sec:cond-to-opt}, we reduce the problem of identifying conditional limits to the analysis of a certain constrained optimization problem on the space of probability measures on marked trees. In Section \ref{sec:characterization_of_the_optimizers}, we analyze this optimization problem and prove our conditional limit theorem for general spin systems. Section \ref{sec:sufficient-condition-for-nondegeneracy} and Section \ref{sec:Ising-phase-transition-proof} contain proofs or results on   two-spin systems.   Appendices \ref{sec:pf-of-TISGM-consistency}-\ref{sec:c=c'} contain  proofs  of various technical lemmas, while 
Appendix \ref{sec:boundaryminimizers} and   
Appendix \ref{sec:example-ferro-ising-lower-consensus} 
  contains illustrative examples.

\subsection{Common notation and terminology}
\label{sec:notation}
We start by collecting some standard notation and terminology used throughout the paper.  Given $x \in \R,$ we let ${\rm sgn}(x)$ denote the sign function, that is, we define ${\rm sgn}(x)$ as $-1,$ $0$ or $1$ based on whether $x < 0$, $x=0$, or $x > 0,$ respectively. Given a set $\Z$, for any function $f:\Z^2\rightarrow \R$, define its {\em range} by
\begin{equation}
    \mathsf{Range}(f)\coloneqq \pr{f(z,z'):z,z'\in \Z}.\label{eqn:range}
\end{equation}
Fix a topological space $\calS$ and a subset $A\subset \calS$. We say that a set $\mathcal{U}_A$ is {\em relatively open} in $A$ if there exists an open set $\mathcal{U}$ in $\calS$ such that $\mathcal{U}_A=\mathcal{U}\cap A.$

\noindent 
\subsubsection{Measure-theoretic notation}\label{subsec:measure-notation}
For fixed $p\in [0,1]$, let $\Ber(p)$ denote the Bernoulli distribution on $\{1,-1\}$ with the probability of $1$ equal to  $p$. Given a Polish space $\mathcal{Z}$ equipped with its Borel $\sigma$-algebra, let $\P(\mathcal{Z})$ denote the space of probability measures on $\mathcal{Z}$ endowed with the topology of weak convergence. Given $\mu\in \P(\mathcal{\mathcal{Z}}),$ let $\E_\mu$ denote expectation with respect to $\mu$. For any bounded measurable function $f:\mathcal{Z}\to \R$, we write 
\begin{align*}
    \langle f,\mu\rangle\coloneqq \int_{\mathcal{Z}}f(z)d\mu(z).
\end{align*}
Given $\mu,\nu\in\P(\mathcal{Z}),$ we use   $\mu\ll \nu$ to indicate that $\mu$  is absolutely continuous  with respect to $\nu.$ Also, let $H(\mu\|\nu)$ denote the relative entropy of $\mu$ with respect to $\nu$, defined by
\begin{align}\label{eqn:relative-entropy}
    H(\mu\|\nu)\coloneqq \begin{dcases}
        \E_\mu \left[\log\left(\frac{d\mu}{d\nu}\right)\right] & \text{ if } \mu \ll \nu,\\
        \infty & \text{ otherwise}.
    \end{dcases}  
\end{align}

When  $\Z$ is finite,  let $\mathsf{Unif}(\Z)$ denote the uniform distribution on $\X.$ We will often consider random variables $(X_o, X_1)$ with some law $\pi \in \P (\Z^2).$ In this case we denote the marginal law of $X_o$ by $\pi^o$ and the conditional law of $X_1$ given $X_o$ by $\pi^{1\mid o}.$

\noindent 
\subsubsection{Graph notation:}\label{subsubsec:topology} Given a graph $G = (V,E)$, with vertex set $V = V(G)$ and edge set $E = E(G)$, we will often abuse notation and use $G$ to also denote the vertex set; the meaning should be clear from the context. Let $\calG$ denote the set of graphs. For any subset $\Lambda\subseteq V$, define its boundary as
\begin{equation}\label{eqn:graph-boundary}
    \partial \Lambda\coloneqq \pr{v\in V\setminus \Lambda: \exists u\in \Lambda \text{ such that }\pr{u,v}\in E}.
\end{equation}
For any graph $G\in \calG$, the graph distance $d_G$ is defined as follows: for any two vertices $u,v\in G$, define
\begin{equation}\label{eqn:graph-distance}
    d_G(u,v)\coloneqq \inf\pr{k\in \NZ: \exists v_0=v,v_1,\ldots,v_k=u \in V\text{ such that }\pr{v_{i-1},v_i}\in E,\forall i\in [k]}.
\end{equation}

Let $\Gstar$ denote the set of (unlabeled) rooted graphs, that is, graphs with a distinguished vertex called the root. An element in $\Gstar$ is often denoted by $(G,o)$ or $G$ when the root is clear from context.
An acyclic rooted graph $(T,o)$ is called a tree.  The set of all (unlabeled) rooted trees is denoted by $\Tstar$.
Fix $r\in \N$. For any rooted tree $(T,o)\in \Tstar$ and any vertex $v\in T$, we say that $v$ is a depth-$r$ vertex (of $T$) if $d_T(v,o)=r$. We say that a rooted tree $(T,o)\in \Tstar$ has  depth $r$ if $\sup\pr{d_T(v,o):v\in T}=r$. Given $(T,o)\in \Tstar$,  its depth-$r$ subtree $T_r=(T,o)_r$ is defined to be  the subgraph of $(T,o)$ containing all vertices with depth at most $r$ rooted at $o$. The set of (unlabeled) rooted trees with depth less (or equal) than $r$ is denoted by $\calT_{*,r}$.  Elements in $\Tstarone$ will be referred to as  ``stars''. 
 Given a rooted tree $(T,o)\in \Tstar$ and any $v\in T \setminus \{o\}$,  define its ancestor $a(v)$ as the vertex $u\in N_v(T)$ such that $d_T(u,o)=d_T(v,o)-1$. Note that $a(v)$ is uniquely  defined since $(T,o)$ is a rooted tree.

For any positive integer $\kappa$, let $T^\k$ denote the infinite unlabeled rooted $\k$-regular tree, that is an acyclic graph that has a distinguished vertex, called the root, which has $\kappa$ neighbors, and where  all other vertices have $\kappa$ neighbors. Note that $T^\k\in \Tstar$. For consistency, we shall 
denote the sets $\{T^\k\}$ and $\{T^\k_r\}$ by just $\Tstar^\k$ and $\calT_{*,r}^\k$, respectively. 

\subsubsection{Marked Graph notation}\label{subs:marked-graph-notation}

Next, given a nonempty finite set  $\X,$ let $\Gstar[\X]$ denote the set of marked rooted graphs with $\X$-valued vertex marks, that is, the set of rooted graphs with each vertex carrying an $\X$-valued mark. A typical element of $\Gstar[\X]$ is written as  $(G,X),$ where $G\in \Gstar$ and $X = (X_v)_{v \in G}$  denotes the vertex marks. We equip $\Gstar[\X]$ with the local topology, which makes it a Polish space (see \cite{AldLyo07} or \cite[Lemma 3.4]{Bor16}). Moreover, let $\Tstarone[\X]$ denote the set of ``marked stars'' with $\X$-valued vertex marks, that is, the set of unlabeled rooted trees in $\Tstarone$ with each vertex carrying an $\X$-valued mark. A typical element of $\Tstarone[\X]$ is written as $(\tau,X)$, where $\tau\in \Tstarone$ and $X=(X_v)_{v\in \tau}$ denotes the vertex marks. Furthermore, for $\k\in \N$, let $\TkappaX{1}\subset \Tstarone[\X]$ denote the collection of ``marked $\k$-stars'', that is, marked stars whose root degree is $\k$. Note that $\TkappaX{1}$ is a finite set, and we equip it with the discrete topology.

\begin{definition}[Depth-$r$ marginals]\label{def:rho_h}
    For any $r\in \N$ and $\rho\in \P\cpr{\Gstar[\X]}$, define the depth-$r$ marginal $\rho_r\in \P\cpr{\calG_{*,r}[\X]}$ of $\rho$ as the law of $(\pmb\tau,\bfX)_r$, where $\Law\cpr{\pmb\tau,\bfX}=\rho.$ Note that $\rho_1\in \P\cpr{\Tstarone[\X]}$.
\end{definition}

\begin{remark}\label{remark:strong-weak-top}
    Since $\TkappaX{1}$ is a finite set, the weak topology on $\P(\TkappaX{1})$ is equivalent to the strong topology 
    generated by the total variation metric (e.g., see, \cite[Exercise 3.2.11]{Dur19}). 
    
\end{remark}

\begin{remark}\label{rmk: labeling scheme}
Although $\P(\TkappaX{1})$ is the space of probability measures on unlabeled marked $\kappa$-stars, it is sometimes convenient to view it as the collection of probability measures on labeled marked stars where the leaves are assigned labels uniformly at random from $\{1,\ldots,\k\}$.  In the sequel, whenever vertex labels are used for a $\TkappaX{1}$ random element $(\tau,X),$ it is understood that $(\tau,X)$ is viewed as a labeled marked star where the labels are assigned uniformly at random. Note that in this case the leaf marks $(X_1, \ldots X_\k)$ are exchangeable.
\end{remark}

\section{Main results: How atypical consensus occurs}
\label{sec:main-result}
Throughout, we fix a positive integer $\kappa\geq 2$, a finite set $\X$ and a mark distribution  $\nu \in \P(\X)$, which we assume without loss of generality to satisfy  $\nu (x) > 0$ for all $x \in {\mathcal X}$.

\subsection{Random graph model}\label{sec:empirical-measures}

For each $n\in \N$, let $G_n$ be the random $\kappa$-regular graph on $n$ vertices, that is, $G_n$ is  uniformly sampled from the set of $\k$-regular graphs on $n$ vertices.   Let $(G_n,X^n)$ denote  the corresponding marked graph consisting of the graph $G_n$ with its vertices carrying  independent and identically distributed (i.i.d.) marks  $X^n = (X^n_v)_{v \in G_n}$ with common law $\nu$, that 
  are also independent of $G_n$. The joint law of $(G_n,X^n)$ can be viewed as a prior distribution and we are interested in how it changes when conditioned on a rare event.  Let  $U_n$ denote the corresponding component empirical measure: 
\begin{equation*}
    U_n=U_n(G_n,X^n)\coloneqq \frac{1}{n}\sum_{v\in G_n}\delta_{\Conn_v(G_n,X^n)},
\end{equation*}
where $\delta_z$ denotes the Dirac measure at $z$ and $\Conn_v(G_n,X^n)$ denotes the marked rooted graph induced by the connected component of $(G_n,X^n)$ rooted at $v$, viewed as an element of $\Gstar[\X]$. Note that $U_n$ is a $\P(\Gstar[\X])$-valued random element.

Similarly, let $L_n$ denote the  corresponding neighborhood empirical measure: 
\begin{align*}
    L_n = L_n(G_n, X^n)\coloneqq\frac{1}{n}\sum_{v\in G_n}\delta_{cl_v(G_n,X^n)}, 
\end{align*}
where $cl_v(G_n,X^n)$ denotes the marked $\kappa$-star induced by the first neighborhood of $(G_n,X^n)$ rooted at $v$, viewed as an element of $\TkappaX{1}$ (i.e., the unlabeled marked $\kappa$-star that has root  $v$ and the set of leaves equal to the   neighbors of $v$ in $G_n$,  along with their corresponding i.i.d.\,marks). Note that $L_n$ is a $\P(\TkappaX{1})$-valued random element, and  $L_n$ is the depth-$1$ marginal of $U_n$, that is, $L_n=(U_n)_1$, as specified in Definition \ref{def:rho_h}.

\begin{remark}[Typical Asymptotic Behavior and the True Law]
\label{remark:true-law} 
Define  
$\eta \in \P(\TkaXinf)$ 
to be the law of an infinite $\k$-regular tree with independent vertex marks having common law $\nu$. 
It is well known  
(e.g., see \cite[Theorem 3.2]{LacRamWu23} or \cite[Theorem 5.8]{OliReiSto20}) that 
\begin{equation}
    \label{ln-lln}
L_n \stackrel{p}{\rightarrow} \eta_1 \quad \text{ and }\quad U_n\stackrel{p}{\rightarrow}\eta \quad \mbox{ as } n \rightarrow \infty, \end{equation}
where $\stackrel{p}{\rightarrow}$ represents convergence in probability.  We refer to $\eta$ as  the true law of the infinite (marked) tree and $\eta_1$ as the true law of the (marked) root neighborhood. 
\end{remark}
In this article, we are interested in 
Gibbs conditioning principles that describe the asymptotic behavior of the above marked graphs and their component empirical measures, when conditioned on a natural class of rare events.

\subsection{Conditional limits and  phase transitions in two-spin systems} 
Our first main result concerns the asymptotic behavior of $\{U_n\}$ conditioned on a rare event associated with $\{L_n\}$ for two-spin systems.  We fix $\X:=\{1, -1\}$, and let $\prone \in (0,1)$ be such that $\nu = \Ber(\prone)$.  Also, define the {\em consensus function} 
\begin{align}  
\label{def-spech}
\spech (x,y) := xy, \qquad x, y \in \X.  
\end{align}

Then the empirical consensus, defined as $\E_{L_n} \sqpr{\sum_{v=1}^\k\spech(X_o,X_v)}$, measures the amount of agreement between  neighboring vertices  in the graph. By Remark \ref{remark:true-law}  the empirical consensus  converges to a limit:
\begin{equation}
    \E_{L_n} \sqpr{\sum_{v=1}^\k\spech(X_o,X_v)}\stackrel{p}{\rightarrow} \Exp_{\eta_1}\sqpr{\sum_{v=1}^\k\spech(X_o,X_v)} = \k (2\prone -1)^2\eqqcolon \smref. \label{eqn:smref}
\end{equation}
We consider rare events wherein the empirical consensus deviates from its typical value. Specifically, for $c \neq \smref,$ we consider the event
\begin{align*}
    \Conset_n(c)\coloneqq
    \begin{dcases}
       \pr{ \E_{L_n} \sqpr{\sum_{v=1}^\k\spech(X_o,X_v)} \geq c} \quad \text{ if } c > \smref ,\\
       \pr{ \E_{L_n} \sqpr{\sum_{v=1}^\k\spech(X_o,X_v)} \leq c} \quad \text{ if } c < \smref . 
    \end{dcases}
\end{align*}
We refer to such an event as an  {\em atypical consensus}, with $\cc < \smref$ referring to events with low consensus and $\cc > \smref$ referring to events with  high consensus. Our first result, Theorem \ref{thm:Ising-phase-transition} presented in Section \ref{subs:Isingresult}, 
states that when  conditioned on an atypical consensus, $\{U_n\}$ asymptotically always behaves like an Ising measure on $T^\k$ with a suitable inverse temperature and  external field. The nature of the  Ising measure depends both on whether one is conditioning on low or high consensus, and whether the original mark distribution is biased or unbiased.
Additionally, the most likely asymptotic conditional behavior of the   
joint distribution of the marks $\{X_i\}_{i \in [n]}$ conditioned on an atypical consensus as described in  Section \ref{subs:connecting-back-to-finite-graphs}.  
 In order to  state these results, we first define the class of  Ising measures on infinite regular trees in Section \ref{subs-Ising}  and summarize some of their relevant properties in Section \ref{subs-Isingprops}. 

\subsubsection{Ising measure on infinite regular trees}
\label{subs-Ising}

In this section we introduce the  class of Ising measures that appear as conditional limits. We begin with the definition of Ising measures (equivalently, Ising Gibbs measures) on regular trees, which are Markov random fields (MRF) on regular trees with specifications defined below.  Recall that $\partial \Lambda$ denotes the boundary of a subset $\Lambda$, as stated in \eqref{eqn:graph-boundary}.

\begin{definition}[Ising measures on the infinite regular tree]\label{def:ising-DLR}
    An Ising measure on $T^\k$ with interaction parameter $\beta\in \R$ and external field $B\in \R$, abbreviated to Ising $(T^\k,\beta, B)$ measure, is a probability measure $\ising\in \P\cpr{\pr{1,-1}^{T^\k}}$ such that for any finite set $\Lambda \subset T^\k$ and 
    $\omega_{\Lambda^c}\in \pr{1,-1}^{\Lambda^c}$, 
    the  conditional marginal distribution $\ising_\Lambda (\cdot|\omega_{\Lambda^c})$, of $\ising$ on $\Lambda$ given $\omega_{\Lambda^c}$,  is equal to 
    $\gamma_\Lambda (\cdot |\omega_{\partial \Lambda})$, where the family of kernels $\gamma_\Lambda$ are defined as follows: for any $\sigma_\Lambda\in \pr{1,-1}^\Lambda$, 
    \begin{equation} 
    \label{defn:gamma}
\gamma_\Lambda \cpr{\sigma_\Lambda|\omega_{\partial \Lambda}} :=\frac{1}{Z_\Lambda(\beta,B,\omega)}\exp \pr{\beta\sum_{\stackrel{\pr{u,v}\in E(T^\k)}{u,v\in \Lambda}}\sigma_u\sigma_v+B\sum_{u\in \Lambda}\sigma_v+\beta\sum_{\stackrel{\pr{u,v}\in E(T^\k)}{u\in \Lambda,v\in \Lambda^c}}\sigma_u\omega_v},
    \end{equation}
    where $Z_\Lambda(\beta,B,\omega)$ is the normalization constant that makes $\gamma_\Lambda(\cdot|\omega_{\partial \Lambda})$ a probability measure.  
 For fixed  $\omega_{\partial \Lambda} \in \pr{1,-1}^{\partial \Lambda}$, we refer to  the measure $\gamma_\Lambda\cpr{\cdot|\omega_{\partial \Lambda}}$  in $\P\cpr{\pr{1,-1}^\Lambda}$ as the finite volume Ising measure on $\Lambda$ with interaction parameter $\beta$, external field $B$ and boundary condition $\omega_{\partial \Lambda}$. 
\end{definition}

\begin{remark}
The regimes $\beta>0$ and $\beta < 0$, correspond, respectively, to ferromagnetic and antiferromagnetic Ising measures with inverse temperature $|\beta|$.
\end{remark}

\subsubsection{Ising measures on infinite regular trees}
\label{subs-Isingprops}

    It is well known that there exist multiple Ising measures on $T^\k$ for certain subset of parameters $(\beta,B)$ referred to as the co-existence regime.   When characterizing  them,   a special role is played by the following map. 
    
    \begin{definition}[Ising Cavity Map]
        \label{def:cavitymap}
        Given   $(\k,\beta,B) \in \{2,3,  \ldots \} \times \R^2,$ the Ising cavity map 
        \[ \Gamma=\Gamma(\k,\beta,B):\mathbb R\rightarrow \mathbb R
        \]
        is 
        defined by
\begin{equation}\label{eqn:Ising-cavity-map}
    \Gamma(\exth)\coloneqq B+(\k-1)\tanh^{-1}(\tanh(\beta)\tanh(\exth)).
\end{equation}
\end{definition}

\begin{remark}
    \label{rem:cavitymap}
Using \cite[Proposition 12.24]{georgii2011gibbs}, every fixed point $\exth$ of the cavity map $\Gamma$ corresponds to an Ising $(T^\k, \beta, B)$ measure $\ising = \ising_\theta$ on $T^\k$. 
In fact, the Ising measures obtained in this manner are called {\em translation-invariant splitting (TIS) Ising measures} on $T^\k$ (or {\em completely homogeneous Markov chains} in \cite{georgii2011gibbs}). We provide more details on the latter class of Gibbs measures in a more general setting in  Section \ref{subs:TISGM}. 
\end{remark}

The next lemma summarizes well known properties of fixed point(s) of the Ising cavity map $\Gamma$ in different parameter regimes. These are subsequently used to characterize  corresponding Ising measures.

\begin{lemma}[Known Properties of the Ising Cavity Map]
\label{lem:cavitymap}
Fix $\k\geq 2$ and $B\in \R$. In the ferromagnetic regime $\beta>0$, there exists a critical threshold $\betacrit=\betacrit(\k,B)>0$
 such that the map $\Gamma = \Gamma(\k,\beta,B)$ has a unique fixed point $\exth^*$ if and only if $(\beta, B)$ lie in the uniqueness regime 
 \begin{equation}
        \label{regime:unique}
           \pr{(\beta',B'): 0<\beta'<\betacrit(\k,B)\text{ or }(\beta',B')=(\betacrit(\k,0),0)}.
 \end{equation}
  Moreover, the following  properties hold in the ferromagnetic regime: 
\begin{enumerate}
\item 
Suppose $B > 0$. Then $\Gamma$  has a  unique fixed point $\exth^+$ in   the interval  $(0,\infty)$, and   
\begin{enumerate} 
\item 
if $\beta = \betacrit$,  then 
$\Gamma$ has exactly one additional fixed point $\exth^- < 0$; 
\item 
if $\beta > \betacrit$,   then 
$\Gamma$ has exactly two additional fixed points  $\exth^-<\exth^\sharp<0$. 
\end{enumerate}
  \item Suppose  
 $B < 0$. Then $\Gamma$  has a  unique fixed point  $\exth^-$  in the interval  $(-\infty,0)$, and 
 \begin{enumerate}
     \item 
 if $\beta = \betacrit$,  then 
$\Gamma$ has exactly one additional fixed point $\exth^+ > 0$; 
 \item 
 if $\beta > \betacrit$, then $\Gamma$ 
has exactly two additional fixed points  $0 < \exth^\sharp < \exth^+$.
\end{enumerate}
\item 
When  $B = 0$,  $\exth^\sharp = 0$ is always a fixed point of $\Gamma$, 
and $\Gamma$ has exactly three fixed points $\exth^-<\exth^\sharp=0<\exth^+$ if 
 $\beta  >\betacrit = \betacrit(\k,0)$.
\end{enumerate}
  Furthermore, $\betacrit(2,B)=\infty$ for every  $B\in \R$, and for $\k\geq 3$,
\begin{itemize}
        \item $\betacrit(\k,0)=\tanh^{-1}(\frac{1}{\k-1})$;
        \item $\betacrit(\k,B)=\betacrit(\k,-B)$, for every $B\in \R$.
\end{itemize}

In the antiferromagnetic regime $\beta<0$, the map $\Gamma=\Gamma(\k,\beta,B)$ has a unique fixed point $\exth^\sharp$ for all $B\in \R$. In particular, $\sgn(\exth^\sharp)=\sgn(B)$.
\end{lemma}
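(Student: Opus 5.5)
We study the fixed points of $\Gamma(\exth)=B+(\k-1)f(\exth)$, where $f(\exth):=\tanh^{-1}(\tanh\beta\tanh\exth)$, through the auxiliary function $F(\exth):=\exth-(\k-1)f(\exth)$. Fixed points of $\Gamma$ are exactly the solutions of $F(\exth)=B$. We first record the elementary properties of $f$ that drive the whole argument:
\begin{itemize}
\item $f$ is odd, and $f'(\exth)=\tanh\beta\,(1-\tanh^2\exth)/(1-\tanh^2\beta\tanh^2\exth)$;
\item $|f|<|\beta|$, so $f$ is bounded;
\item $f'(0)=\tanh\beta$;
\item for $\beta>0$, $f$ is strictly concave on $(0,\infty)$ and strictly convex on $(-\infty,0)$.
\end{itemize}
The concavity comes from a direct computation: with $t=\tanh\exth$ and $a=\tanh\beta$, $f'$ is a decreasing function of $t^2$. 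Because $f$ is bounded, $F(\exth)\to\pm\infty$ as $\exth\to\pm\infty$, so $\Gamma$ always has at least one fixed point.

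\textbf{Antiferromagnetic case $\beta<0$.} Here $f$ is decreasing, so $F'=1-(\k-1)f'>0$. Hence $F$ is a strictly increasing bijection of $\R$, and $F(\exth)=B$ has exactly one solution $\exth^\sharp$. Since $F$ is odd and increasing, $F(0)=0$, and therefore $\sgn(\exth^\sharp)=\sgn(F(\exth^\sharp))=\sgn(B)$.

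\textbf{Ferromagnetic case $\beta>0$.} By the convexity properties above, $F'=1-(\k-1)f'$ is even and increasing in $|\exth|$, with $F'(0)=1-(\k-1)\tanh\beta$. This gives two subcases.
\begin{itemize}
\item If $(\k-1)\tanh\beta\le1$, then $F$ is strictly increasing (at worst $F'(0)=0$ at an isolated point). So for every $B$ there is a unique fixed point.
\item If $(\k-1)\tanh\beta>1$, then $F'$ has exactly two zeros $\pm\exth_c$, with $\exth_c>0$. So $F$ increases on $(-\infty,-\exth_c]$, decreases on $[-\exth_c,\exth_c]$, and increases on $[\exth_c,\infty)$. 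It has a local maximum $M=F(-\exth_c)>0$ and a local minimum $-M$.
\end{itemize}
In the second subcase, counting the solutions of $F=B$ gives: one solution if $|B|>M$; two solutions if $|B|=M$ (one of them a tangency); three solutions if $|B|<M$.

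\textbf{Defining the threshold.} We define $\betacrit(\k,B)$ as the infimum of those $\beta$ with $(\k-1)\tanh\beta>1$ and $M(\beta)\ge|B|$. For this to be a clean threshold we need $M(\beta)$ to be continuous and strictly increasing in $\beta$. Continuity follows from the implicit function theorem. For monotonicity, $M(\beta)=\sup_{\exth\le0}F(\exth;\beta)$, and $\partial_\beta F(\exth;\beta)=-(\k-1)\partial_\beta f>0$ for $\exth<0$, since $f$ is increasing in $\beta$ when $\exth>0$ and odd in $\exth$. So $M$ is strictly increasing. It starts at $0$ when $(\k-1)\tanh\beta=1$, and it tends to a finite or infinite limit as $\beta\to\infty$; we must check that this limit is $\infty$ when $\k\ge3$. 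As $\beta\to\infty$, $f\to\exth$-sign-like saturation, namely $f(\exth)\approx\min(\exth,\beta)$-type behaviour. Then $F(\exth)\approx\exth-(\k-1)\exth=-(\k-2)\exth$ on $(-\beta,0)$, so $M\gtrsim(\k-2)\beta\to\infty$. When $\k=2$ we have $(\k-1)\tanh\beta=\tanh\beta<1$ for every $\beta$, so there is always uniqueness and $\betacrit(2,B)=\infty$.

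With this definition, uniqueness holds exactly in the regime \eqref{regime:unique}. For $B=0$ at $\beta=\betacrit(\k,0)$ we get $F'(0)=0$, which is still a unique fixed point. The formula $\betacrit(\k,0)=\tanh^{-1}(1/(\k-1))$ follows directly. The symmetry $\betacrit(\k,B)=\betacrit(\k,-B)$ follows from the oddness of $F$.

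\textbf{Locating the fixed points.} For $B>0$ and $|B|\le M$, the shape of $F$ places the solutions as follows:
\begin{itemize}
\item On $[\exth_c,\infty)$, $F$ increases from $-M$ to $\infty$, giving exactly one positive solution $\exth^+>\exth_c$.
\item Since $F\le0$ on $[0,\exth_c]$ and $F<B$ there, no other solution lies in $[0,\infty)$.
\item The remaining one or two solutions lie in $(-\infty,0)$: they are $\exth^-<-\exth_c<\exth^\sharp<0$, and they coincide at $-\exth_c$ when $B=M$, i.e., when $\beta=\betacrit$.
\end{itemize}
The case $B<0$ follows by the symmetry $\exth\mapsto-\exth$. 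For $B=0$, $F(0)=0$ gives $\exth^\sharp=0$, together with $\pm\exth^\pm$ when $\beta>\betacrit$. We should double-check one point: for $B>0$ and $\beta<\betacrit$, the unique fixed point is positive. This holds because $F(0)=0<B$ and $F$ is increasing on $[0,\infty)$ beyond the region where $F\le0$, so the solution lies in $(0,\infty)$. This matches the claim that $\Gamma$ has a unique fixed point in $(0,\infty)$.

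\textbf{Main obstacle.} The step needing the most care is the strict monotonicity of $M(\beta)$ and its divergence as $\beta\to\infty$ for $\k\ge3$. These are what make $\betacrit(\k,B)$ a genuine threshold with uniqueness exactly below it. If that step proves delicate, we would instead cite \cite[Section 12.2]{georgii2011gibbs}, where these facts are classical.
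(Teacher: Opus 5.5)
Your proof is correct, and it takes a genuinely different route from the paper.

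\textbf{The paper's route.} The paper's proof is almost entirely by citation:
\begin{itemize}
\item the uniqueness regime and the fixed-point structure for $B\neq 0$ are read off from \cite[Lemma 12.27]{georgii2011gibbs}, with $\beta^*(\kappa,B)$ defined as the inverse of Georgii's critical field $J\mapsto h(J,\kappa-1)$;
\item the existence and uniqueness of the positive fixed point for $B>0$ is taken from \cite{DemMon10};
\item the value $\tanh^{-1}(1/(\kappa-1))$ comes from Georgii's explicit computations;
\item the case $B<0$ follows by spin-flip symmetry.
\end{itemize}
Only the antiferromagnetic case ($\Gamma$ decreasing, $\Gamma(0)=B$) and the fixed point $0$ at $B=0$ are argued directly. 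There your argument coincides with the paper's.

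\textbf{Your route.} You give a self-contained calculus proof. Strict concavity of $f$ on $(0,\infty)$ makes $F'=1-(\kappa-1)f'$ even and strictly increasing in $|\theta|$. This produces the dichotomy at $(\kappa-1)\tanh\beta=1$ and the three-monotone-piece shape of $F$. Your local maximum $M(\beta)=\sup_{\theta\le 0}F(\theta;\beta)$ is exactly Georgii's critical field. Hence your threshold $\beta^*(\kappa,B)=M^{-1}(|B|)$ agrees with the paper's.

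\textbf{What each buys.} The paper's route is short and defers to the literature. Yours is more transparent: one sees directly
\begin{itemize}
\item why $\beta^*(\kappa,B)\ge\tanh^{-1}(1/(\kappa-1))$, with equality only at $B=0$;
\item why the uniqueness regime contains $(\beta^*(\kappa,0),0)$, where $F'(0)=0$ but $F$ is still strictly increasing, yet not $(\beta^*(\kappa,B),B)$ for $B\neq 0$, where there is a tangency at $-\theta_c$;
\item the ordering $\theta^-<-\theta_c<\theta^\sharp<0<\theta_c<\theta^+$ for $B>0$ and $\beta>\beta^*$.
\end{itemize}
The cost is that you must establish continuity, strict monotonicity and divergence of $M(\beta)$. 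Your monotonicity argument via $\partial_\beta F>0$ on $\{\theta<0\}$ is clean.

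\textbf{Two places to tighten.}
\begin{itemize}
\item For $\kappa\ge 3$, drop the $\min(\theta,\beta)$ heuristic and argue instead: for each fixed $\theta<0$, $F(\theta;\beta)\to(\kappa-2)|\theta|$ as $\beta\to\infty$. Hence $\liminf_{\beta\to\infty}M(\beta)\ge(\kappa-2)|\theta|$ for every $\theta<0$.
\item Continuity of $M$ at $\beta=\tanh^{-1}(1/(\kappa-1))$ needs $\theta_c\to 0$. This follows from the explicit formula $\tanh^2\theta_c=\frac{(\kappa-1)a-1}{a(\kappa-1-a)}$ with $a=\tanh\beta$.
\end{itemize}
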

\begin{proof}
We begin with the ferromagnetic regime $\beta>0$. First, \eqref{regime:unique} follows from \cite[Lemma 12.27(i)]{georgii2011gibbs}, on noting that $\beta^*(\k, B)$ is the 
inverse of the map $J\mapsto h(J,d)$ with $J=\beta$, $h=B$ and $d=\k-1$. For Case (1), the existence and uniqueness of $\exth^+$ in $(0,\infty)$ is proved in \cite[Lemma 2.3]{DemMon10}. Together with \cite[Lemma 12.27(ii)-(iii)]{georgii2011gibbs} and the fact that $\exth=0$ is not a fixed point if $B>0$, we conclude (1a) and (1b). Case (2), including (2a) and (2b), follows from Case (1) using the   spin-flip symmetry. For Case (3), the fact that $\theta^\sharp = 0$ is a fixed point of $\Gamma$ follows easily on inspection of the map \eqref{eqn:Ising-cavity-map}. The existence of $\exth^-$ and $\exth^+$ in the non-uniqueness regime follows from \cite[Lemma 12.27(iii)]{georgii2011gibbs}. The value of the critical threshold $\betacrit$ when $B=0$ is a consequence of the computations on the uniqueness threshold in \cite[(12.28) and (12.30)]{georgii2011gibbs}. The fact that $\betacrit$ is symmetric on $B$ is due to \cite[Lemma 12.27]{georgii2011gibbs}.

In the antiferromagnetic regime $\beta<0$, since the Ising cavity map $\Gamma$ is a decreasing function, there is always a unique fixed point $\exth^\sharp$. In particular, since $\Gamma(0)=B$, we see that when $B>0$, the unique fixed point is in the interval $(0,\infty)$; When $B<0$, the unique fixed point is in the interval $(-\infty,0)$; When $B=0$, the fact that $\exth^\sharp=0$ is the unique fixed point follows easily on inspection of $\Gamma.$ 
\end{proof}

Below is a summary of all $\TIS$ Ising measures on $T^\k$ in different regimes; also see Figure \ref{fig:TIS-Ising-characterization}.  They will appear in the main result.

\begin{definition}[Characterization of all finite temperature $\TIS$ Ising measures on $T^\k$]\label{def:Ising-on-tree}
    Given $\k \in \mathbb N, \k \geq 2,$ $0\neq \beta\in \R$, $B\in \R$, and $\Gamma = \Gamma (\k, \beta, B)$. In the ferromagnetic regime $\beta>0$, we let
    \begin{enumerate}
    \item[(1a)] 
         $\ising_\k^*(\beta,B)$ denote the  Ising measure on $T^\k$ corresponding to the fixed point $\exth^*$ when $(\beta, B)$ lie in the uniqueness regime \eqref{regime:unique}; 
          \item[(1b)] $\ising_{\k}^{\sharp}(\beta,B)$ denote the 
        Ising measure 
        on $T^\k$ corresponding to the fixed point $ \exth^\sharp$ of $ \Gamma$ 
        when 
        $\beta > \betacrit$; 
        \item[(1c)] $\ising_\k^+(\beta,B)$ as the  Ising measure on $T^\k$  corresponding to the fixed point $\exth^+$ of $\Gamma$ when $B > 0$ (positive external field) or $\beta>\beta_*$ and $B= 0$ (zero external field in the non-uniqueness regime) or $\beta\geq \beta_*$ and $B<0$ (negative external field in the non-uniqueness regime); 
\item[(1d)] 
$\ising_\k^-(\beta,B)$ 
        as the Ising measure on $T^\k$ corresponding to the fixed point $\exth^-$ of $\Gamma$ when $B < 0$ (negative external field) or $\beta>\beta_*$ and $B= 0$ (zero external field in the non-uniqueness regime) or $\beta\geq \beta_*$ and $B>0$ (positive external field in the non-uniqueness regime);
        \end{enumerate}
        In the antiferromagnetic regime $\beta<0$, we let 
        \begin{itemize}
            \item[(2)] $\ising_\k^\sharp(\beta,B)$ denote the Ising measure on $T^\k$ corresponding to the unique fixed point $\exth^\sharp$.
        \end{itemize}
\end{definition}

\begin{remark} 
\label{rem-Isingmodels}
It is clear from Lemma \ref{lem:cavitymap} and Definition \ref{def:Ising-on-tree} that  
in the uniqueness regime \eqref{regime:unique}, 
$\ising_\k^*(\beta,B)=\ising_\k^+(\beta,B)$ when $B>0$, and $\ising_\k^*(\beta,B)=\ising_\k^-(\beta,B)$ when $B<0$, and also that 
when $0 < \beta \leq  \betacrit(\k,0),$ 
$\ising_\k^\sharp(\beta, 0)  = \ising_\k^*(\beta, 0)$.
\end{remark}

\begin{figure}[h]
    \centering
    \includegraphics[width=0.6\linewidth]{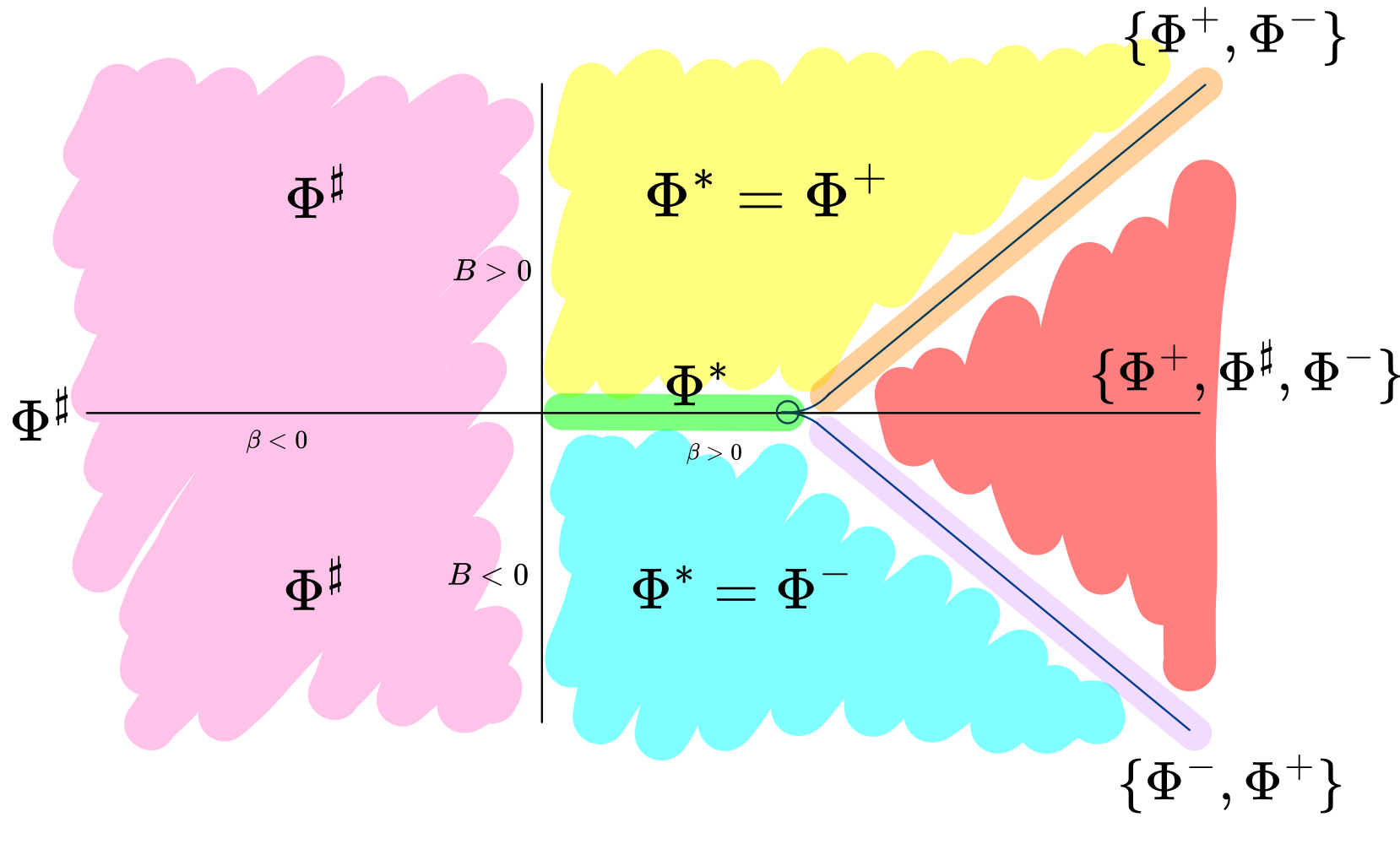}
    \caption{All $\TIS$ Ising measures in different parameter regimes $(\beta,B)\in \R\times \R$.}
    \label{fig:TIS-Ising-characterization}
\end{figure}

\begin{remark}
    In the ferromagnetic regime $\beta > 0$,  $\ising_\k^+$ is also called the Ising measure on $T^\k$ ``with plus boundary condition'' since it is the monotone  decreasing limit,  as $r \rightarrow \infty,$ of the finite volume Ising measures on $T^\k_r$, the depth-$r$ subtree of $T^\k$ defined in Section \ref{subsubsec:topology}, with boundary condition $\omega_{\partial \Lambda_r}\equiv 1$ (see, e.g., \cite[(2.5)]{MonMosSly12}).     
    Analogously, the Ising measure $\ising_\k^-$ is also called the Ising measure ``with minus boundary condition''. 
\end{remark}

\begin{remark}
    Note that the Ising measures defined above are elements in $\P\cpr{\X^{T^\k}}$ and will be identified as elements in $\P\cpr{\TkaXinf}$, according to Remark \ref{rmk: labeling scheme}.
\end{remark}

In order to state the main results in a unified fashion,  
we will go beyond Definition \ref{def:Ising-on-tree} and also consider 
Ising measures on $T^\k$ with ``freezing'' temperature, that is, when  $|\beta|=\infty$. 
In what follows let $T^{\k,+}$ (respectively, $T^{\k,-}$)  denote the infinite $\k$-regular tree with all marks being 
$1$ (respectively, $-1$).  Also, let $T^{\k,\pm}$ be the infinite $\k$-regular tree with root mark $1$ and alternating vertex marks, that is, each vertex $v\in T^\k$ has mark $1$ if $d(v,o)$ is even and mark $-1$ if $d(v,o)$ is odd. 
Likewise, let $T^{k,\mp}$ be the infinite $\k$-regular tree with root mark $-1$ and alternating vertex marks.

\begin{definition}[Freezing temperature Ising measures on $T^\k$] \label{def:freezing-ising} 
         For any $B\in \R$, the ferromagnetic Ising measure on $T^\k$ with freezing temperature and 
         plus/minus boundary condition are defined as follows: 
    \begin{equation}
        \ising_\k^+(\infty,B)=\ising_\k^+(\infty)\coloneqq \delta_{T^{\k,+}},\quad \ising_\k^-(\infty,B)=\ising_\k^-(\infty)\coloneqq \delta_{T^{\k,-}}.\label{eqn:freezing-ferro}
    \end{equation}
    
         In the antiferromagnetic regime, for any $B\in \R$, extend the definition of the unique $\TIS$ Ising measure $\ising_\k^\sharp$ to the following antiferromagnetic Ising measure on $T^\k$ with freezing temperature: 
    \begin{equation}
        \ising_\k^{\sharp}(-\infty,B)=\ising_\k^{\sharp}(-\infty)\coloneqq \frac{1}{2}\delta_{T^{\k,\pm}}+\frac{1}{2}\delta_{T^{k,\mp}}.\label{eqn:freezing-anti}
    \end{equation}
\end{definition}

Having introduced candidate conditional limits,  we can  state our first conditional limit theorem, given atypical consensus. 

\subsubsection{The conditional limit of the component empirical measure for two-spin systems}
\label{subs:Isingresult}

For $\delta > 0,$ define  $c_\delta$ to be $\min\{c+\delta,  \k\}$ if $c > \smref$ and $\max\{c - \delta, -\k\}$ if $c < \smref.$ Recall that the mark distribution is $\nu=\Ber(\prone)$, for some $p \in (0,1)$.
We now state our first main result, whose 
proof  is  relegated to Section \ref{sec:Ising-phase-transition-proof}.
\begin{theorem}[Gibbs conditioning principles and phase transitions for two-spin systems]\label{thm:Ising-phase-transition}
Fix $0\neq c\in [-\k,\k],$ and recall the definition of $\smref$ from \eqref{eqn:smref}. Then the following statements hold:

\begin{enumerate} 
    \item If $\cc \in [-\k, \smref)$, then there exists a unique $\beta = \beta (c,p) \in [-\infty,0)$ such that 
    \begin{align*}
    \lim_{\delta \downarrow 0} \lim_{n \to \infty} \Prob(U_n \in \cdot  \mid \Conset_n(c_\delta)) =\delta_{ \ising_\k^\sharp(\beta,B)},
    \end{align*}
    where $B=\log \cpr{p/(1-p)},$ and $\ising_\k^\sharp(\beta, B)$ is as specified in Definition \ref{def:Ising-on-tree}(2) and \eqref{eqn:freezing-anti}. Moreover, $\beta(-\k,p)=-\infty$ and the unique $\beta$ is such that the consensus of $\ising_\k^\sharp(\beta,0)$ is $\cc$.
    \item If $c \in (\smref, \k]$ and $p\neq 1/2$,  then the following is true:
    \begin{enumerate}
        \item If $p>1/2$, then there exists a unique $\beta = \beta (c,p) \in (0,\infty]$ such that
        \begin{align*}
        \lim_{\delta \downarrow 0} \lim_{n \to \infty} \Prob(U_n \in \cdot  \mid \Conset_n(c_\delta)) =\delta_{ \ising_\k^+(\beta,B)},
        \end{align*}
        where $B=\log \cpr{p/(1-p)}>0$ and $\ising_\k^+(\beta, B)$ is as specified in Definition \ref{def:Ising-on-tree}(1c) and \eqref{eqn:freezing-ferro}. Moreover, $\beta(\k,p)=\infty$ and the unique $\beta$ is such that the consensus of $\ising_\k^+(\beta,B)$ is $\cc$;
        \item If $p<1/2$, then there exists a unique $\beta = \beta (c,p) \in (0,\infty]$ such that
        \begin{align*}
        \lim_{\delta \downarrow 0} \lim_{n \to \infty} \Prob(U_n \in \cdot  \mid \Conset_n(c_\delta)) =\delta_{ \ising_\k^-(\beta,B)},
        \end{align*}
        where $B=\log \cpr{p/(1-p)}<0$ and $\ising_\k^-(\beta, B)$ is as specified in Definition \ref{def:Ising-on-tree}(1d) and \eqref{eqn:freezing-ferro}. Moreover, $\beta(\k,p)=\infty$ and the unique $\beta$ is such that the consensus of $\ising_\k^-(\beta,B)$ is $\cc$;
    \end{enumerate}
    \item If $c \in (\smref, \k]$  and $p=1/2$, in which case $\smref=0$,  then the following is true:
    \begin{enumerate}
        \item If, in addition, $c\in (0, \k/(\k-1)]$, then $U_n$ given $\Conset_n(c)$ converges to the unique Ising measure $\ising_\k^*(\tanh^{-1}(\cc/\k),0)$ as specified in Definition \ref{def:Ising-on-tree}(1a). In particular, the consensus of $\ising_\k^*(\tanh^{-1}(c/\k),0)$ is $\cc$;
        \item If $c \in (\k/(\k-1),\k]  $, then there exists a unique $\beta = \beta(c,0) \in (\betacrit(\k,0),\infty]$ such that  $U_n$ given $\Conset_n(c)$ asymptotically lies in $\{\ising_\k^+\cpr{\beta,0},\ising_\k^-\cpr{\beta,0}\}$, where $\ising_\k^+(\beta,0)$ and $\ising_\k^-(\beta,0)$ are as specified in Definition \ref{def:Ising-on-tree}(1c)(1d) and \eqref{eqn:freezing-ferro}. In other words,  for any $\varepsilon > 0,$ we have
    \begin{align*}
        \lim_{\delta \downarrow 0} \limsup_{n \to \infty} \frac{1}{n} \log \Prob(U_n \notin B_\varepsilon(\pr{\ising_\k^+\cpr{\beta,0},\ising_\k^-\cpr{\beta,0}}) \mid \Conset_n(c_\delta)) < 0,
    \end{align*}
    where $B_\varepsilon(\{\ising_\k^+\cpr{\beta,0},\ising_\k^-\cpr{\beta,0}\})$ denotes an $\varepsilon$-neighborhood of $\{\ising_\k^+\cpr{\beta,0},\ising_\k^-\cpr{\beta,0}\}$ in $\P(\TkaXinf).$
    Furthermore, $\beta(\k,0)=\infty$ and the unique $\beta$ is such that the consensus of $\ising_\k^+(\beta,0)$ is $\cc$. 
        
    \end{enumerate}
\end{enumerate}

\end{theorem}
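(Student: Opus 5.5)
The plan is to reduce the conditional limit statement to a finite-dimensional non-convex optimization over symmetric edge measures, solve that problem for the two-spin consensus function, and then lift the minimizers back to Ising measures on $T^\k$.

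First, I would invoke the LDP for $\{U_n\}$ with rate function $I^\k$ from \cite{RamYas23,BalPerRei26}. The event $\Conset_n(c_\delta)$ is a closed condition on the continuous functional $L_n=(U_n)_1$. Together with the Gibbs conditioning principle of \cite{Leonard2010EntropicProjections}, this shows that conditional limit points concentrate, exponentially fast, on the set of minimizers of $I^\k(\rho)$ subject to $\E_{\rho_1}[\sum_v X_oX_v]\ge c$ (resp. $\le c$). The limit $\delta\downarrow 0$ is handled by continuity of the constrained infimum in $c$.

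Next, using the relative-entropy form of $I^\k$ and the reduction announced as Proposition \oldref{prop:optsimp}, I would rewrite the problem as minimizing a functional $J(\pi)$ over symmetric $\pi\in\P(\X^2)$. Formally, $J(\pi)=\tfrac{\k}{2}H(\pi\|\nu\otimes\nu)-(\k-1)H(\pi^o\|\nu)$, subject to $\k\,\E_\pi[X_oX_1]\ge c$. Minimizers of the component problem are then exactly the unimodular extensions of edge minimizers. For $\X=\{\pm1\}$ a symmetric $\pi$ is determined by two parameters: the magnetization $m=\E_\pi X_o$ and the correlation $q=\E_\pi[X_oX_1]$. The constraint simply fixes $q=c/\k$, since the minimizer lies on the boundary of the constraint set because $J$ has its unique unconstrained zero at $\pi=\nu\otimes\nu$. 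The problem therefore becomes a one-variable minimization of an explicit function $\phi_c(m)$ over the admissible interval $|m|\le$ the bound compatible with $q$, namely $1+q\ge 2|m|$.

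Then I would handle two kinds of minimizers. By Lemma \oldref{lemma:pos_marginal_cannot_be_locmin}, boundary minimizers are degenerate; in the two-spin case these occur only at $c=\pm\k$, which gives the freezing measures of Definition \oldref{def:freezing-ising}. Interior critical points satisfy the Lagrangian/Bethe stationarity equations. Via Lemma \oldref{lemma:TISGM-to-cavity} these correspond to fixed points $\theta$ of the cavity map $\Gamma(\k,\beta,B)$, with $\beta$ the Lagrange multiplier, $\sgn\beta=\sgn(c-\smref)$, and $B=\log(p/(1-p))$ coming from the $\nu$-tilt. The lifting lemma \oldref{lemma:unimodular-extension-of-splitting-Gibbs-measure} then identifies the component minimizer with $\ising_\k^{\cdot}(\beta,B)$. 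Uniqueness of $\beta$ follows from strict monotonicity of $\beta\mapsto$ consensus of the relevant TIS measure, which I would check by differentiating the explicit edge marginal in terms of $(\beta,\theta)$.

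The main obstacle is selecting the global minimizer among the fixed points listed in Lemma \oldref{lem:cavitymap}. For $\beta<0$ there is a unique fixed point, so case (1) is immediate once the interior critical point is shown to be unique. For $\beta>0$, $B\neq0$, I must show that $\theta^+$ (for $p>1/2$) beats $\theta^-$ and $\theta^\sharp$. I would compare $\phi_c$ at the corresponding $m$ values directly: $\phi_c(m)$ is the entropy term plus the linear term $-(\text{const})\,m\log(p/(1-p))$ coming from $H(\cdot\|\nu)$. When $p=1/2$ the function $\phi_c$ is even, so the comparison reduces to whether $m=0$ is a local minimum. I would compute $\phi_c''(0)$, expecting the sign change exactly at $q=1/(\k-1)$, that is $c=\k/(\k-1)$. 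Beyond that threshold $\phi_c$ should be double-well with minimizers $\pm m^*$, which correspond to $\ising^\pm_\k$. This gives cases (3a) and (3b). Making this second-derivative and convexity analysis of $\phi_c$ rigorous on the whole interval is where I expect the bulk of the work.
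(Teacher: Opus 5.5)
Your overall route is the paper's: Gibbs conditioning and Proposition~\oldref{prop:optsimp}, restriction to the segment $t=t(c)$ (your $q=c/\kappa$, $m=2s-1$), exclusion of boundary points, KKT/Bethe stationarity giving TIS Ising measures, and selection via the even-plus-linear structure of $\phi_c$ and the sign change of $\phi_c''(0)$ at $c=\kappa/(\kappa-1)$. Your treatment of case (1), however, rests on two claims that are false as stated. The first is that every interior critical point has $\mathrm{sgn}\,\beta=\mathrm{sgn}(c-c_{m,\mathrm{ref}})$. The second is that case (1) follows once ``the'' interior critical point is shown to be unique. Both fail for $p<1/2$ and $\kappa/(\kappa-1)<c<\kappa(2p-1)^2$, a nonempty range when $(2p-1)^2>1/(\kappa-1)$. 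There $\partial_sJ^\nu_\kappa(s,t(c))=\log(\nu(-1)/\nu(1))+\partial_sJ^{(2)}_\kappa(s,t(c))$. The function $\partial_sJ^{(2)}_\kappa(\cdot,t(c))$ is odd about $1/2$ and, in this double-well regime, dips below $0$ on $(1/2,1-t(c))$. If that dip is deeper than the positive shift, $s\mapsto J^\nu_\kappa(s,t(c))$ has three critical points: the global minimizer in $(t(c),1/2)$, plus a local maximum and a local minimum in $(1/2,1-t(c))$. Concretely, take $\kappa=5$, $p=5\cdot10^{-4}$, $t(c)=10^{-3}$, so $c=4.98<\kappa(2p-1)^2\approx4.990$. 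At $s=1-\frac{8}{3}t(c)$ one gets $\partial_sJ^{(2)}_5\approx-7.71$, while $\log(\nu(-1)/\nu(1))\approx7.60$. The extra local minimum has positive magnetization. For $\beta<0$ the unique fixed point has the sign of $B<0$ (Lemma~\oldref{lem:cavitymap}), so its multiplier must be positive. It is therefore a ferromagnetic TIS measure with boundary condition opposite to the bias and consensus below $c_{m,\mathrm{ref}}$, which is exactly the phenomenon of Example~\oldref{example:ferro-lower-consensus}. The paper flags this in Case~2B ($\tilde s_*$ ``might not be the unique critical point''). So the uniqueness you plan to prove is false there, and $\mathrm{sgn}\,\beta$ cannot be read off from $c-c_{m,\mathrm{ref}}$ for arbitrary critical points.

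The fix uses tools you already have. Apply the reflection identity $J^\nu_\kappa(s,t)-J^\nu_\kappa(1-s,t)=(2s-1)\log(\nu(-1)/\nu(1))$ in case (1) as well. It confines every global minimizer to the bias side, with nonzero magnetization of sign $\mathrm{sgn}(2p-1)$, since $\partial_sJ^\nu_\kappa(1/2,t(c))\neq0$. Lemma~\oldref{lemma:root-mag-consensus-for-ising}(1a)/(1b) then excludes $\beta>0$ when $c<c_{m,\mathrm{ref}}$: a ferromagnetic TIS measure whose magnetization has the sign of $B$ is $\ising^{\pm}_\kappa(\beta,B)$, whose consensus exceeds $c_{m,\mathrm{ref}}$. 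This is the paper's argument. Alternatively, for global minimizers only, KKT for the original one-sided constraint gives a multiplier of sign $\mathrm{sgn}(c-c_{m,\mathrm{ref}})$. It is nonzero because $\nu\otimes\nu$ is the only critical point of $J^\nu_\kappa$. That fact (Lemma~\oldref{lemma:optimizing_c=c'}), not the ``unique zero'' of the non-convex $J$, is also the real reason the constraint is active. Uniqueness then follows in one of two ways. The first is the paper's second-derivative argument, which gives a unique critical point in $(t(c),1/2]$. The second is your monotonicity of $\beta\mapsto$ consensus along the $\theta^\sharp$ or $\theta^+$ branch, which does hold: $|\theta|$ is nondecreasing in $\beta$ on these branches and the consensus is increasing in $e^{2\beta}\cosh 2\theta$. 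This second route genuinely shortens the analysis. For $p=1/2$, Lemma~\oldref{lemma:root-mag-consensus-for-ising}(1c) (nonzero magnetization forces consensus $>\kappa\tanh\beta>\kappa/(\kappa-1)$), together with the sign of $\phi_c''(0)$, already settles (3a)--(3b) without a global convexity study. Finally, uniqueness of $\beta$ is automatic once the minimizer is unique, since $\pi_*(1,1)\pi_*(-1,-1)/\pi_*(1,-1)^2=e^{4\beta}$.
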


\begin{remark}
\label{rmk:external-field-p}
Note that in all cases, the temperature parameter $\beta$ depends on both $\cc$ and $p$, whereas the external field $B=B(p)$ is always the same function on $p$. Moreover, the specific external field $B(p)=\log(p/(1-p))$ can be viewed as merely representing the original product  measure of the marks in the following way: on any finite graph $G$, when $B=B(p),$ 
\begin{align}
    \ising_G(\beta,B)(\sigma)&\coloneqq \frac{1}{Z_G}\exp\pr{\beta\sum_{\pr{u,v}\in E(G)}\sigma_u\sigma_v+B\sum_{v\in G}\sigma_v},\quad \forall \sigma\in \pr{1,-1}^{|G|}\label{eqn:ising-finite-graph}\\
    &\propto \exp\pr{\beta\sum_{\pr{u,v}\in E(G)}\sigma_u\sigma_v} \prod_{v\in G}\nu(\sigma_v),\nonumber
\end{align}
For this reason, we will also refer to the mark distribution $\nu$ as the reference measure.
\end{remark}

The first assertion of Theorem \ref{thm:Ising-phase-transition}  
is a conditional limit theorem for $\{U_n\}$, which states that when the consensus is smaller  than typical, the sequence $\{U_n\}$ conditioned on the atypical consensus converges to the unique $\TIS$ antiferromagnetic Ising measure with a specific external field and inverse temperature. It is important to note that there are multiple antiferromagnetic Ising measures on $T^\k$ whenever $|\beta|$ is large enough (depending on $B$), with  two prominent examples being the period-$2$ extremal measures, a.k.a. the ``chessboard phases''. However, since any (local weak) limit of a sequence of finite graphs  has to be ``unimodular'' (this property is called ``sofic'' in the literature), and unimodular measures are translation-invariant, the two chessboard phases cannot be obtained as any (local weak) limit of finite graph sequences. This explains the translation-invariant property of the conditional limit. On the other hand, the fact that the conditional limit is splitting is related to the fact that we are studying atypical events involving only pairwise interactions, as will be clear from the proof.  

The second assertion of Theorem \ref{thm:Ising-phase-transition} considers the setting where consensus is atypically high, and the reference measure $\nu$ is non-uniform (equivalently, $p\neq 1/2$). In this case, the sequence $\{U_n\}$ conditioned on an atypical consensus converges to the ferromagnetic Ising measure with a specific inverse temperature and external field with a boundary condition that has the same bias as  the reference measure $\nu$, specifically, plus boundary condition if $p > 1/2$  and minus boundary condition if $p < 1/2$. It is worth noting that there are multiple $\TIS$ Ising measures on $T^\k$ for  when  $\beta$ is large enough given $B$. Nonetheless, the conditional limit is still unique since the asymmetry of the reference measure $\nu$ makes one of them more likely to occur.

The above asymmetry is not present when the true mark distribution $\nu$ is uniform, which leads to an interesting phase transition   in the nature of the  conditional limit   of $\{U_n\}$  
  with respect to the atypical consensus value $\cc$.  
  As specified in (3a), there is still a unique conditional limit if (and only if) the consensus is positive but not too large, that is, $0=\smref< \cc\leq \kappa/(\kappa-1)$. In particular, this  limit is symmetric with respect to  spin flips,  and in this case, the optimal way to exceed the  consensus value $\cc$ is by only increasing the probability of agreements between neighboring spins.
    Note that the threshold $\k/(\k-1) = \tanh\cpr{\betacrit(\k,0)}$  is the 
    maximal consensus that can be achieved by any ferromagnetic Ising measure on $T^\k$ with no external field in the uniqueness regime. 
    Thus, as shown in (3b),  a consensus  greater than $\kappa/(\kappa-1)$  can  be achieved by a ferromagnetic Ising measure only in the non-uniqueness regime. Moreover, among the multiple Ising measures in the non-uniqueness regime, including $\ising_\k^-,\ising_\k^\sharp$ and $\ising_\k^+$, Theorem \ref{thm:Ising-phase-transition} suggests that the most probable way of achieving this atypical consensus $c$ is by breaking the symmetry between the $1$ and $-1$ spins and selecting $\ising_\k^-$ and $\ising_\k^+$. In fact, these two Ising measures appear equally likely (due to the spin-flip symmetry of the reference measure $\nu$, the definition of the consensus function $m$, and the Ising measures with no external field) and it is not possible to determine which one is the conditional limit solely based on the atypical consensus value $c$.

\subsubsection{Connecting back to the conditional behavior of  the joint  mark distribution}\label{subs:connecting-back-to-finite-graphs}

While Theorem \ref{thm:Ising-phase-transition} provides  a characterization of $\{U_n\}$ under an atypical consensus in terms of various Ising measures on the infinite regular tree, we are also interested in the conditional structure of the joint distribution of the marks $\{X_i\}_{i\in [n]}$. In this section, we try to fill this gap by deducing a reasonable model 
that is consistent with the conditional limits given an atypical empirical consensus. 

We propose the following {\em posterior model} on $n$-vertex graphs:

\begin{protocol}
    Sample a random $\k$-regular graph $G_n$ on $n$ vertices and assign the $\{1,-1\}$ spins 
    $X^n=\{X_i\}_{i \in [n]}$ to each vertex according to the Ising measure on (the random graph) $G_n$ with 
    external field $B= B(p) = \log(p/(1-p))$ and 
        the unique temperature parameter $\beta = \beta (c,p) \in \R$ specified in Theorem \ref{thm:Ising-phase-transition}.   In other words,  $\Law(X^n\mid G_n)=\ising_{G_n}(\beta,B)$, where $\ising_{G_n}$ is the Ising measure on $G_n$ defined in \eqref{eqn:ising-finite-graph}.
\end{protocol}

We now show that the Posterior Model 
can be rigorously justified in several regimes, including when $\cc > \smref$,  with the most subtle case being in the setting $p=1/2$ where there is a phase transition,  addressed in 
1(d) below. Recall the various Ising models specified in Definition \ref{def:Ising-on-tree}.

\begin{lemma}\label{lemma:posterior-consistent}
    Fix $p \in (0,1)$ and $\cc\in (-\k,\k)$ such that $\cc\neq \smref$. Let $(G_n,X^n)$ be sampled from the Posterior Model 
    with associated $B = B(p)$  and $\beta=\beta(\cc,p)$  as specified therein. 
    Then the local weak limit of the sequence $\{(G_n,X^n)\}_{n\in \N}$  is consistent with the asymptotic conditional limit of $U_n$, given  the event $\Conset_n(c)$, in the following cases:
    \begin{enumerate}
        \item[(1a)] If $\cc>\smref$ and $p>1/2$, then $(G_n,X^n)$ converges locally weakly to $\ising_\k^+(\beta,B)$ a.s.;
        \item[(1b)] If $\cc>\smref$ and $p<1/2$, then $(G_n,X^n)$ converges locally weakly to $\ising_\k^-(\beta,B)$ a.s.;
        \item[(1c)] If $\cc\in (0,\k/(\k-1)]$ and $p=1/2$, then $(G_n,X^n)$ converges locally weakly to $\ising_\k^*(\beta,B)$ a.s.;
        \item[(1d)] If $\cc\in (\k/(\k-1),\k]$ and $p=1/2$, then $(G_n,X^n)$ asymptotically lies in $\{\ising_\k^+(\beta,0),\ising_\k^-(\beta,0)\}$ with high probability
        , that is, for any $\varepsilon>0$,
        \begin{equation*}
            \lim_{n\rightarrow \infty}\bbP\cpr{U(G_n,X^n)\in B_\varepsilon\cpr{\pr{\ising_\k^+(\beta,0),\ising_\k^-(\beta,0)}}}=1.
        \end{equation*}
        \item[(2)] If $\cc<\smref$ and there exists a unique Ising $(T^\k,\beta,B)$ measure, then $(G_n,X^n)$ converges locally weakly to $\ising_\k^\sharp(\beta,B)$ a.s.. 
    \end{enumerate}
\end{lemma}
\begin{proof}
    Note that, by \cite[Proposition 2.5]{DemMon10b}, random $\k$-regular graphs $G_n$ converges locally weakly to $T^\k$ on a set of full measure $\Omega\setminus \mathcal{N}$.
    Below, we justify the stated convergence of $(G_n(\omega),X^n)$, whose law is given by $\ising_{G_n(\omega)}(\beta,B)$, for every $\omega\in \Omega\setminus \mathcal{N}$ in each of the cases:

    {\em Case (1a):} It follows from  \cite{DemMon10} (see also  the introduction of \cite{MonMosSly12})  that since $\beta>0$ and $B>0$, $\ising_{G_n(\omega)}(\beta,B)$ converges locally weakly to $\ising_\k^+(\beta,B)$.
    
    {\em Case (1b):} This follows from (1a) and the spin-flip symmetry.

    {\em Case (1c):} In this case, Theorem \ref{thm:Ising-phase-transition}(3a) implies that $\beta(\cc,p)=\tanh^{-1}(\cc/\k)$ and $B=0$. By \cite[Theorem 12.31]{georgii2011gibbs}, there is a unique Ising $(T^\k,\beta,0)$ measure $\ising_\k^*(\beta,0)$. Therefore, \cite[Proposition 2.14]{LacRamWu23} implies that $\ising_{G_n(\omega)}(\beta,0)$ converges locally weakly to $\ising_\k^*(\beta,0)$. 

    {\em Case (1d):} In this case, Theorem \ref{thm:Ising-phase-transition}(3b) implies that $\beta(\cc,p)> \betacrit(\k,0)$. Note that \cite[Theorem 2.4]{MonMosSly12} implies that $\ising_{G_n(\omega)}=\ising_{G_n(\omega)}(\beta,0)$ converges locally in probability to $\frac{1}{2}\ising_\k^+(\beta,0)+\frac{1}{2}\ising_\k^-(\beta,0)$. At first, this may appear  to suggest that the $(G_n,X^n)$ limit does not align  with the conditional limit set for $U_n$  specified in Theorem \ref{thm:Ising-phase-transition}(3b). However, \cite[Theorem 2.4]{MonMosSly12} also provides a  refinement of the convergence result, which shows that whenever the graph sequence $G_n(\omega)$ further satisfies a certain edge expander property (which holds with high probability for random $\k$-regular graphs as $n$ grows to infinity), the conditional measure $\ising_{G_n(\omega),+}\coloneqq \ising_{G_n(\omega)}(\cdot \mid \sum_{v\in G_n(\omega)}X_v>0)$ converges to $\ising_\k^+(\beta,0)$ locally in probability. In particular, under $\ising_{G_n(\omega)}$, the two events $\{\sum_{v\in G_n(\omega)}X_v>0\}$ and $\{\sum_{v\in G_n(\omega)}X_v<0\}$ occur equally likely with probability close to $1/2$. As a result, with probability close to $1/2$ (when $G_n(\omega)$ satisfies a certain edge expander property and $\sum_{v\in G_n(\omega)}X_v>0$), $\ising_{G_n(\omega)}$ can be approximated by $\ising_\k^+(\beta,0)$, and with probability close to $1/2$ (when $G_n(\omega)$ satisfies a certain edge expander property and $\sum_{v\in G_n(\omega)}X_v<0$), $\ising_{G_n(\omega)}$ can be approximated by $\ising_\k^-(\beta,0)$. 
This completes the justification of Case (1d).

    {\em Case (2):} In this case, \cite[Proposition 2.14]{LacRamWu23} and the assumption that there is a unique Ising $(T^\k,\beta,B)$ measure imply that $\ising_{G_n(\omega)}(\beta,B)$ converges locally weakly to the unique Ising measure $\ising_\k^\sharp(\beta,B)$.
\end{proof}

\begin{remark}
In the remaining regime not covered by Lemma \ref{lemma:posterior-consistent}, 
Theorem \ref{thm:Ising-phase-transition}(1)
shows that the conditional limit of 
$U_n$ given  the event $\Conset_n(c)$, is still concentrated on the 
unique $\TIS$ Ising measure $\ising_\k^\sharp$. on $T^\k$.
   Despite this,  the description on the $n$-vertex graph in this regime is more complicated because the antiferromagnetic Ising measure on random regular graphs is known to exhibit a ``replica-symmetric-breaking (RSB) phase transition'' at a  
     certain inverse temperature value $\beta_{RSB} < 0$ that is lower than the uniqueness threshold (e.g., see \cite{CojLoiMezSor22}, \cite{ZdeBoe10} and references therein.) 
     For instance, when $B=0$, it is shown in \cite{CojLoiMezSor22} that $\beta_{RSB}=-\tanh^{-1}(1/\sqrt{\k-1})$, which is smaller than the Gibbs uniqueness threshold  
     $\beta=-\tanh^{-1}(1/(\k-1))$ (in the absence of an external field). For a reason similar to that given to explain   Theorem \ref{thm:Ising-phase-transition}(1) in Section \ref{subs:Isingresult}, we believe that by the sofic nature of limits, the Posterior Model should still be the right answer in the regime $\beta \in (\beta_{RSB}, 0)$. However, in the regime   $\beta < \beta_{RSB}$ (also referred to as  the condensation regime),  the global structure of the graph becomes dominant and  the question of picking the most likely model on the $n$-vertex graph conditional on such a low consensus is less straightforward. 
\end{remark}

\subsection{General Spin Systems}\label{subsec:result-general-spin}

We now describe some results on the conditional  behavior of multi-spin systems conditioned on a more general class of rare events beyond atypical consensus.  
Throughout, $\X$ is an arbitrary finite set, and $\nu \in \P(\X)$ is the mark distribution, which we assume without loss of generality to satisfy  $\nu (x) > 0$ for all $x \in {\mathcal X}$. Note that we shall identify $\nu\in \P(\X)$ with its probability mass function (p.m.f.) depending on the context.

\subsubsection{Rare Events of Interest and Conditional Limit Laws}

The class of atypical or rare events we consider is described in terms of a {\em symmetric}
function $h:\X^2\to \R$ that we refer to as the {\em  edge potential}. 
Define the corresponding  {\em (local) $h$-consensus}  $S_h:\TkappaX{1}\to \R$ 
to be
\begin{align}
\label{def-interact}
    S_h = S_h(\tau,X)\coloneqq\sum_{v =1 }^\kappa  h(X_o,X_v), \quad (\tau,X)\in \TkappaX{1}. 
\end{align}
Then for any measure $\mu\in 
\P (\TkappaX{1})$, 
 the {\em  $h$-consensus} under $\mu$ is equal to 
 \begin{align}
\label{eqn;avg-h-local}
    \E_{\mu}\sbrac{S_h} = \E_{\mu}\sbrac{\sum_{v=1}^{\k} h(X_o,X_v)}=\k\E_{\pi_\mu}\sbrac{h(X_o,X_1)},
\end{align}
where the last equality holds 
due to the exchangeability of the leaf marks under $\mu$ (see Remark \ref{rmk: labeling scheme}), and $\pi_\mu$ denotes the edge marginal, that is, the joint law of $(X_o,X_1)$ under $\mu.$  
Hence,  \eqref{eqn;avg-h-local} shows that  $\E_{\mu}\sbrac{S_h}$ depends on $\mu$ only through its edge marginal.    
When the measure $\mu$ is the random empirical measure $L_n$, we refer to the random variable $\E_{L_n}\sbrac{S_h}$ as the {\em empirical $h$-consensus}.  Note that by \eqref{remark:true-law} and the fact that $\TkappaX{1}$ is finite, as $n \rightarrow \infty$, 
the empirical $h$-consensus converges in probability 
to a deterministic limit: 
\begin{align}
     \label{eqn:ceta}
\E_{L_n}  \sbrac{S_h} \rightarrow 
 \E_{\eta_1}\sbrac{S_h} := \cc_{h, \texttt{ref}}. 
\end{align}
We refer to the limit 
$\cc_{h, \texttt{ref}}$ as the {\em typical  $h$-consensus}.  When $h(x,y) = xy$, we recover the previous consensus functional \eqref{def-spech}.

We are interested in the behavior of $U_n$, conditioned on seeing an atypical empirical $h$-consensus, as made precise below.

\begin{definition}[Atypical empirical $h$-consensus]
\label{def-rareevent}
Given an edge potential $h$ and $\cc\in \R$ with $ \cc \neq \sref$, 
let the event $\Conset_n(h,\cc)$ be defined by 
\begin{align}
    \Conset_n(h,c)\coloneqq
    \begin{cases}
       \{ \E_{L_n} [S_h] \geq \cc\} \quad \text{ if } \cc > \sref,\\
       \{ \E_{L_n} [S_h] \leq \cc\} \quad \text{ if } \cc < \sref. 
    \end{cases}
    \label{eqn:atypical-emp-h-interaction}
\end{align}
and let the collection of measures $\Conset(h,\cc)$ be defined by
\begin{align}
    \Conset(h,c)\coloneqq
    \begin{cases}
       \{ \mu\in \P\cpr{\TkappaX{1}}:\E_{\mu} [S_h] \geq \cc\} \quad \text{ if } \cc > \sref,\\
       \{ \mu\in \P\cpr{\TkappaX{1}}:\E_{\mu} [S_h] \leq \cc\} \quad \text{ if } \cc < \sref. 
    \end{cases}
    \label{eqn:atypical-h-interaction}
\end{align}
\end{definition}
Also, in what follows, 
for any $c \neq \sref$ and  $\delta > 0$, 
set 
\begin{align}
\label{sdelta}
c_\delta := 
\begin{cases}
     c + \delta  \quad \text{ if } c > \sref,  \\
 c - \delta \quad \text{ if } c < \sref.  
 \end{cases}
 \end{align}

We are interested in  conditional 
limit laws of the following kind.

\begin{definition}
 \label{def-condlim} 
 Fix an edge potential $h$, and $c \in \R$, $c \neq \sref.$ Let $\Conset_n(h,c)$ and  $(c_\delta)_{\delta > 0}$ be  as in Definition \ref{def-rareevent} and  \eqref{sdelta}, respectively. 
\begin{enumerate}
\item Given $\rho^*\in \P(\Gstar[\X])$, we say  that {\em   $U_n$  given $\Conset_n(h,c)$ converges to $\rho^*$ } if  
   \begin{align}
\lim_{\delta\downarrow0}\lim_{n\to \infty}\mathbb{P}(U_n\in \cdot\mid \Conset_n(h, c_\delta))=\delta_{\rho^*}(\cdot).
\end{align}
    \item 
For any $\mathcal{U}\subset \P(\Gstar[\X]),$ we say that {\em  $U_n$ given $\Conset_n(h,c)$ lies in $\mathcal{U}$ asymptotically} if for every open set $U\supset \mathcal{U}$, the following holds:
    \begin{align}
        \lim_{\delta\downarrow 0}\limsup_{n\to \infty}\frac{1}{n}\log\mathbb{P}\left(U_n\notin U\ \vert\ 
        \Conset_n(h,c_\delta) \right)<0.
    \end{align} 
\end{enumerate}
\end{definition}

\subsubsection{Results for General Spin Systems and Rare events}\label{subs:TISGM}
 
In the general setting we will show that, 
similar to the case of atypical consensus, the conditional limits under an atypical $h$-consensus are again Gibbs measures on trees. However, not all Gibbs measures on trees  appear as  conditional limits. Below, we define the subclass of Gibbs measures on trees that appear as  candidate conditional limit points.

\begin{definition}[Translation-invariant splitting Gibbs measure on infinite trees]
\label{def:TISGM}
    Given an infinite (locally finite) tree $T$, a positive symmetric function $\psi:\X\times \X\rightarrow (0,\infty)$ and a positive function $\bar\psi:\X\rightarrow (0,\infty)$, we say that a measure $\pmb\rho\in \P(\X^{T})$ is a splitting Gibbs measure on $T$ with state space $\X$ and specification $(\psi,\bar\psi)$ if there exists a collection of probability measures $(\ell_v)_{v\in T}$ in $\P(\X)$, called the ``boundary law'' or ``entrance law'', such that for each $r\in \N$ and $x\in \X^{T_r}$,
    \begin{align}\label{eqn:marginal-splitting-Gibbs}
        \pmb\rho_{T_r}(x)=\frac{1}{Z_{T_r}}\prod_{\pr{u,v}\in E(T_r)}\psi(x_u,x_v)\prod_{v\in T_{r-1}}\bar\psi(x_v)\prod_{v\in T_r\setminus T_{r-1}}\ell_v(x_v), 
    \end{align}
    where $T_r$ is the depth-$r$ subtree of $T$ defined in Section \ref{subsubsec:topology}, $\pmb\rho_{T_r}$ is the marginal of $\pmb\rho$ on $T_r$ and $Z_{T_r}$ is the normalizing constant. 
    When $T=T^\k$ for some $\k\geq 2$, we say that a splitting Gibbs measure on $T^\k$ is translation-invariant if the corresponding boundary law is homogeneous, that is, $\ell_v=\ell$,  for all $v\in T^\k$ for some $\ell\in \P(\X)$. Denote the collection of translation-invariant splitting ($\TIS$) Gibbs measures on $T^\k$ with state space $\X$ and specification $(\psi,\bar\psi)$ by $\TISGM^{\psi,\bar\psi}_{T^\k}$.
\end{definition}

\begin{remark}
    While our terminology agrees with that used in  \cite{Roz13,KulRozKha14},  
    in \cite[Chapter 12]{georgii2011gibbs} (and \cite{Pre74,Spi75,Zac83,Zac85}), 
    splitting Gibbs measures on trees are also referred to as {\em Markov chains on trees},   $\TIS$ Gibbs measures on trees are also called {\em completely homogeneous Markov chains on trees} 
    and $T^\k$ is also referred to as {\em the Cayley tree of degree $\k-1$}. 
     Characterizations of splitting Gibbs measures for various specifications have been extensively studied,  for example, see \cite{georgii2011gibbs,Roz13,Spi75} for Ising measures and  \cite{KulRozKha14,Roz21} for Potts measures. 
\end{remark}

\begin{remark}
    In order to guarantee  consistency of  marginals, that is, $(\pmb \rho_{T_{r+s}})_r=\pmb\rho_{T_r}$ for every $r,s\in \N$, the boundary laws $(\ell_v)_{v\in T}$ must satisfy a certain consistency property, defined in \cite[Definition 12.10]{georgii2011gibbs}, which is used as the definition of {\em boundary laws} therein. For the purpose of this paper, we only state this consistency property for homogeneous boundary laws when $T=T^\k$ in the following lemma
    and postpone its proof to Appendix \ref{sec:pf-of-TISGM-consistency}.
     
\end{remark}

\begin{lemma}[Characterizing edge marginals via boundary laws] \label{lemma:TISGM-to-cavity}
    Given any $\pmb\rho\in \TISGM^{\psi,\bar\psi}_{T^{\k}}$, its boundary law $\ell\in \P(\X)$ is a fixed point of the recursion $\BP:\P(\X)\rightarrow \P(\X)$ defined by
    \begin{align}\label{eqn:cavity}
        \BP\ell(x)=\frac{1}{\zeta}\bar\psi(x)\cpr{\sum_{z\in \X}\psi(x,z)\ell(z)}^{\k-1},\quad \forall x\in \X,
    \end{align}
    where $\zeta$ is the normalizing constant. In fact, there is a one-one correspondence between the set $\TISGM^{\psi,\bar\psi}_{T^\k}$ and the set of fixed points $\Delta^*\coloneqq\pr{\ell\in \P(\X): \BP\ell=\ell}$. 
    Moreover, for any $\ell\in \Delta^*$, the edge marginal $\pi_{\pmb \rho_1}$ of the TISGM $\pmb \rho$ corresponding to $\ell$ satisfies  
    \begin{align}\label{eqn:edge-marginal-cavity}
        \pi_{\pmb\rho_1}(x,z)\propto \ell(x)\psi(x,z)\ell(z),\quad \forall x,z\in \X.
    \end{align}
\end{lemma}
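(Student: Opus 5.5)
The plan is to work directly from the defining formula \eqref{eqn:marginal-splitting-Gibbs} with a homogeneous boundary law $\ell$, and to exploit the fact that $T^\k$ is a tree, so that the partition sums factor over branches. Write $\pmb\rho_{T_r}$ via \eqref{eqn:marginal-splitting-Gibbs} and impose consistency $(\pmb\rho_{T_{r+1}})_{T_r}=\pmb\rho_{T_r}$. Summing out the depth-$(r+1)$ leaves in $\pmb\rho_{T_{r+1}}$ attaches to each depth-$r$ vertex $v$ (which has $\k-1$ children) the factor
\[
\bar\psi(x_v)\Bigl(\sum_{z}\psi(x_v,z)\ell(z)\Bigr)^{\k-1}=\zeta\,\BP\ell(x_v).
\]
Consistency for all $x\in\X^{T_r}$ then forces $\prod_{v\in T_r\setminus T_{r-1}}\BP\ell(x_v)\propto\prod_{v}\ell(x_v)$ as functions of the depth-$r$ configuration, with all other factors held fixed. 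Since $\psi$ and $\bar\psi$ are positive, every configuration has positive weight, so we may vary a single leaf mark while fixing the others. This gives $\BP\ell(x)/\ell(x)$ independent of $x$, and since both $\BP\ell$ and $\ell$ are probability measures, $\BP\ell=\ell$. The case $r=1$, where the leaves are children of the root, is handled the same way.

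For the converse, suppose $\ell\in\Delta^*$. Define $\pmb\rho_{T_r}$ by \eqref{eqn:marginal-splitting-Gibbs}. The calculation above, run in reverse, shows that the family is consistent. Kolmogorov's extension theorem then yields a measure $\pmb\rho\in\P(\X^{T^\k})$, which by construction lies in $\TISGM^{\psi,\bar\psi}_{T^\k}$. The assignment $\ell\mapsto\pmb\rho$ is a bijection:
\begin{itemize}
\item It is surjective by the first paragraph, since every element of $\TISGM^{\psi,\bar\psi}_{T^\k}$ has some homogeneous boundary law, and that law must be a fixed point.
\item It is injective because $\ell$ can be recovered from $\pmb\rho$. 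Take $r=1$ and fix the root mark and all leaf marks but one; the conditional law of the remaining leaf is proportional to $\psi(x_o,\cdot)\ell(\cdot)$. Dividing by the positive $\psi(x_o,\cdot)$ and normalizing recovers $\ell$.
\end{itemize}
Note that positivity of $\psi$ is used throughout to make these divisions legitimate.

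To obtain the edge marginal \eqref{eqn:edge-marginal-cavity}, take $r=1$ and sum over the marks of leaves $2,\dots,\k$:
\[
\pi_{\pmb\rho_1}(x,z)\propto \bar\psi(x)\,\psi(x,z)\,\ell(z)\Bigl(\sum_{y}\psi(x,y)\ell(y)\Bigr)^{\k-1}=\zeta\,\BP\ell(x)\,\psi(x,z)\,\ell(z).
\]
Using $\BP\ell=\ell$, this is $\propto\ell(x)\psi(x,z)\ell(z)$.

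The main point of care is the bookkeeping in the consistency computation. In \eqref{eqn:marginal-splitting-Gibbs} the root carries $\bar\psi$ and its $\k$ children in $T_1$ carry $\ell$, but when passing from $T_r$ to $T_{r+1}$ each depth-$r$ vertex gains $\bar\psi$ and $\k-1$ children. One must check that the exponent $\k-1$, not $\k$, appears uniformly at every level, and that the normalizing constants $Z_{T_r}$ absorb the factors $\zeta^{|T_r\setminus T_{r-1}|}$. I expect this verification to be the only real obstacle; the rest is direct.
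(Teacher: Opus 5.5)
Your proof is correct, but it is more self-contained than the paper's. The paper does not verify the correspondence by hand. It cites \cite{georgii2011gibbs} (Theorem 12.12 and Corollary 12.17, the general theory of boundary laws and tree-indexed Markov chains) for the fixed-point property and the bijection with $\Delta^*$. It then applies the Kolmogorov extension theorem for $\ell\in\Delta^*$. Finally it does exactly your depth-$1$ computation, summing out $\k-1$ leaves and using $\BP\ell=\ell$, to get \eqref{eqn:edge-marginal-cavity}.

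You instead derive everything from the defining formula \eqref{eqn:marginal-splitting-Gibbs}:
\begin{itemize}
\item The consistency $(\pmb\rho_{T_{r+1}})_{T_r}=\pmb\rho_{T_r}$, which is automatic for a measure on $\X^{T^\k}$, produces the factor $\zeta\,\BP\ell(x_v)$ at each depth-$r$ vertex.
\item Comparing this with $\ell(x_v)$ one leaf at a time forces $\BP\ell=\ell$.
\item Running the same identity backwards gives consistency for $\ell\in\Delta^*$.
\item You make injectivity explicit by reading $\ell$ off the conditional law of a single leaf.
\end{itemize}

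One small precision: the positivity you use when varying one leaf comes from $\BP\ell>0$, which holds because $\psi,\bar\psi>0$ and $\ell$ is a probability measure. Through the consistency identity this forces $\ell>0$. You should say this, since a priori $\ell$ could vanish somewhere. It is also exactly the content of Remark \ref{remark:TISGM-is-non-degenerate}.

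Your route works directly with the paper's own definition, where a translation-invariant splitting Gibbs measure is defined through its finite-volume marginals. That avoids translating to Georgii's boundary laws on directed edges, and it makes transparent both where positivity enters and why the exponent is $\k-1$. The paper's citation buys generality instead: inhomogeneous boundary laws, arbitrary trees, and the identification of these measures as genuine Gibbs measures for the specification. None of that is needed for the lemma as stated.
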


\begin{remark}\label{remark:TISGM-is-non-degenerate}
    Due to Lemma \ref{lemma:TISGM-to-cavity}, any boundary law $\ell$ must have full support on $\X$ since the specifications $\psi$ and $\bar\psi$ are both positive; otherwise \eqref{eqn:cavity} cannot be satisfied. Moreover, due to \eqref{eqn:edge-marginal-cavity}, the edge marginal (and thus the vertex marginal) of any $\pmb\rho \in \TISGM^{\psi,\bar\psi}_{T^\k}$ must have full support on $\X^2$ (and $\X$, respectively).
\end{remark}

\begin{remark}
    Once again, note that the splitting Gibbs measures defined above are elements of  $\P\cpr{\X^{T^\k}}$ and can also be viewed as elements of $\P\cpr{\TkaXinf}$, according to Remark \ref{rmk: labeling scheme}.
\end{remark}

We now state our second main result, which characterizes  conditional limits of $U_n$ under atypical $h$-consensus. First, set 
\[ h_{\min}\coloneqq \min_{x,z\in \X}h(x,z) \quad \mbox{  and } \quad h_{\max}\coloneqq \max_{x,z\in \X}h(x,z). 
\]
Also, let $\Psym{\X}$ denote the subset of symmetric measures in $\P(\X^2)$, that is,
\begin{align}  
\label{def-pps}
\Psym{\X} := \{ \pi \in \P(\X^2): \pi(x, y) = \pi(y,x), \quad  \forall x,y \in \X\}, 
\end{align} and  for $c \in \R,$ define \begin{equation}\label{eqn:Medge}
        \BB_h(c)\coloneqq \pr{\pi\in \Psym{\X}:\k\bbE_{\pi}\sqpr{h(X_o,X_1)}=c}. 
    \end{equation}
Recall the definition of the map $\mathsf{Range}$ in \eqref{eqn:range}.

\begin{theorem}[Conditional limits are  $\TIS$ Gibbs measures] \label{thm:Gibbs-deg-and-nondeg}
    Fix $\nu\in \P(\X)$. Let $h$ be an edge potential and fix $c\in (\k h_{\min},\k h_{\max})\setminus \mathsf{Range}(\k h)$  such that $c\neq \sref$. Then $U_n$ given $\Conset_n(h,\cc)$ lies in $\Gibbsdeg(\cc)$ asymptotically (in the sense of Definition \ref{def-condlim}), where
    \begin{align}\label{eqn:TIGSMs-deg-and-nondeg}
        \Gibbsdeg(\cc)\coloneqq \bigcup_{\X^{\downarrow}\subseteq \X}\bigcup_{\beta\in \R}\TISGM^{\exp (\beta h^\downarrow),\nu^\downarrow}_{T^\k}\bigcap 
        \left\{ \rho \in \P (\Tstar^\kappa[\X]): \pi_{\rho_1} \in \BB_h(c)\right\},
    \end{align}  
   $h^\downarrow\coloneqq h\vert_{\X^\downarrow\times \X^\downarrow}:\X^\downarrow\times \X^\downarrow\rightarrow \R$ and $\nu^\downarrow=\nu\cpr{\cdot|\X^\downarrow}\in \P(\X^\downarrow)$, identified with its p.m.f. 
   \end{theorem}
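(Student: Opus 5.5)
The proof follows the route sketched in the introduction. First, use the LDP for $\{U_n\}$ (Theorem \ref{thm:ldp-Un}, from \cite{RamYas23}) together with the Gibbs conditioning principle of \cite{Leonard2010EntropicProjections}. The event $\Conset_n(h,c_\delta)$ is a closed constraint on $L_n=(U_n)_1$ that depends only on the edge marginal. The condition $c\notin\mathsf{Range}(\k h)$, with $c$ strictly between $\k h_{\min}$ and $\k h_{\max}$, gives continuity of the constrained infimum in $\delta$. It follows that, as $n\to\infty$ and then $\delta\downarrow0$, $U_n$ given $\Conset_n(h,c_\delta)$ lies asymptotically in the set $\optset$ of minimizers of $I^\k$ over $\{\rho:\pi_{\rho_1}\in\BB_h(c)\}$. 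Here I use that the minimizer lies on the boundary of the constraint set, since $c\ne\sref$ and $I^\k$ vanishes only at $\eta$ (the usual quasiconvexity-type argument along segments toward $\eta$ at the level of edge marginals). It therefore suffices to show $\optset\subseteq\Gibbsdeg(c)$.

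Second, reduce the problem to edge marginals. Using the explicit form of $I^\k$ as a sum of relative entropies, for a fixed depth-1 marginal the minimizing $\rho$ is the unimodular extension, a Markov-type lift of the star marginal. Partially minimizing over stars with a fixed edge marginal $\pi$ then gives a reduced functional $J(\pi)$ on $\Psym{\X}$. I expect $J$ to be of the form
\[
J(\pi)=\tfrac{\k}{2}H(\pi\|\nu\otimes\nu)-(\k-1)H(\pi^o\|\nu),
\]
the negative Bethe functional. Minimizers of the component problem then correspond exactly to unimodular extensions of minimizers $\pi_*$ of $J$ over $\BB_h(c)$ (this is Proposition \ref{prop:optsimp}).

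Third, analyze $\pi_*$. If $\pi_*$ has full support, it is an interior critical point of the Lagrangian $J(\pi)-\beta\k\E_\pi[h]$ with symmetric constraint multipliers. Solving the stationarity equation gives $\pi_*(x,z)\propto \ell(x)e^{\beta h(x,z)}\ell(z)$, with $\ell$ a fixed point of $\BP$ for $\psi=e^{\beta h}$ and $\bar\psi=\nu$, following \cite[Proposition 1.7]{DemMonSlySun14}. Lemma \ref{lemma:TISGM-to-cavity} identifies $\pi_*$ as the edge marginal of a TISGM, and the lifting lemma (Lemma \ref{lemma:unimodular-extension-of-splitting-Gibbs-measure}) identifies the unimodular extension with that TISGM. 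This covers the case $\X^\downarrow=\X$.

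The main obstacle is the boundary case. There I would show, as in Lemma \ref{lemma:pos_marginal_cannot_be_locmin}, that the support of $\pi_*$ must be a product $\X^\downarrow\times\X^\downarrow$. Suppose instead that $\pi_*(x,z)=0$ while $\pi_*^o(x),\pi_*^o(z)>0$. Perturb by adding $\varepsilon$ mass at $(x,z),(z,x)$ and compensating elsewhere within the support so that the constraint is kept; this uses $c\notin\mathsf{Range}(\k h)$, so that $\pi_*$ is not a point mass and there is room to rebalance. The entropy term $\tfrac{\k}{2}\varepsilon\log\varepsilon$ then has derivative $-\infty$, while the $(\k-1)H(\pi^o\|\nu)$ term changes only at rate $O(\varepsilon)$ because the relevant marginals are positive. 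This contradicts minimality. Once the support is a product, restrict the problem to $\Psym{\X^\downarrow}$ with $h^\downarrow$ and $\nu^\downarrow$. Since $H(\pi\|\nu\otimes\nu)$ and $H(\pi^o\|\nu)$ differ from their conditioned versions by constants depending only on $\nu(\X^\downarrow)$, with the coefficients $\tfrac{\k}{2}\cdot2$ and $\k-1$ contributing affinely, $\pi_*$ becomes an interior minimizer of the modified problem. The interior argument then yields a TISGM in $\TISGM^{\exp(\beta h^\downarrow),\nu^\downarrow}_{T^\k}$ whose edge marginal lies in $\BB_h(c)$, which completes the proof.
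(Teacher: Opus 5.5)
Your overall architecture is the paper's: Gibbs conditioning, then reduction to the edge problem (your formula $\tfrac{\kappa}{2}H(\pi\|\nu\otimes\nu)-(\kappa-1)H(\pi^o\|\nu)$ is indeed $J^\nu_\kappa$), then KKT together with \cite[Proposition 1.7]{DemMonSlySun14} and Lemmas \ref{lemma:TISGM-to-cavity} and \ref{lemma:unimodular-extension-of-splitting-Gibbs-measure} for interior minimizers, and finally restriction to $\mathcal{X}^\downarrow$ via $J^{\nu^\downarrow}_\kappa=J^\nu_\kappa+\log\nu(\mathcal{X}^\downarrow)$.

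\textbf{The gap.} The step that fails as written is the passage from the constraint $\kappa\mathbb{E}_\pi[h]\ge c$ to the equality constraint $\BB_h(c)$. You justify it by ``the usual quasiconvexity-type argument along segments toward $\eta$''. But $J^\nu_\kappa$ is neither quasiconvex nor even monotone along segments toward $\nu\otimes\nu$. Take $\nu$ uniform on $\{1,-1\}$, $\kappa\ge 5$, and $\pi_\lambda=(1-\lambda)\delta_{(1,1)}+\lambda\,\nu\otimes\nu$. Lemma \ref{lemma:eloge-derivative} gives
\[
J^\nu_\kappa(\pi_\lambda)-J^\nu_\kappa(\delta_{(1,1)})=\tfrac{\kappa-4}{8}\,|\lambda\log\lambda|\,(1+o(1))>0 .
\]
Moreover, $\lambda\mapsto J^\nu_\kappa(\pi_\lambda)$ is smooth on $(0,1)$ with derivative tending to $+\infty$ as $\lambda\downarrow 0$. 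So $J^\nu_\kappa$ also increases toward $\nu\otimes\nu$ from the interior points $\pi_\lambda$ with $\lambda$ small. The same example lifts to $I^\kappa_1$ and $I^\kappa$, since equality holds at the lifts. Hence nothing general guarantees that a minimizer $\tilde\pi$ with slack $\kappa\mathbb{E}_{\tilde\pi}[h]=c'>c$ can be improved by moving toward $\eta$. This step cannot be skipped, because the set in \eqref{eqn:TIGSMs-deg-and-nondeg} imposes $\pi_{\rho_1}\in\BB_h(c)$ exactly.

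\textbf{The missing idea.} You need the argument of Lemma \ref{lemma:optimizing_c=c'} (Appendix \ref{sec:c=c'}). With slack, the constraint is locally inactive, so $\tilde\pi$ is an unconstrained local minimizer of $J^\nu_\kappa$. The only such point is $\nu\otimes\nu$, whose consensus is the typical value, not $c'$. Your own tools prove this, including at the simplex boundary:
\begin{itemize}
\item Your support perturbation (where now only total mass must be rebalanced) forces $\operatorname{supp}\tilde\pi=\mathcal{X}'\times\mathcal{X}'$.
\item On $\mathcal{X}'$, $\tilde\pi$ is an interior critical point with $\beta=0$. The cavity map with $\psi\equiv 1$ collapses to $\ell\mapsto\nu(\cdot\mid\mathcal{X}')$, so $\tilde\pi=\nu(\cdot\mid\mathcal{X}')^{\otimes 2}$.
\item The case $\mathcal{X}'\subsetneq\mathcal{X}$ is excluded: putting mass $\varepsilon$ on $(y,x'),(x',y)$ with $y\notin\mathcal{X}'$ changes $J^\nu_\kappa$ at rate $-\tfrac12|\varepsilon\log\varepsilon|$, by Lemma \ref{lemma:eloge-derivative}.
\end{itemize}

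\textbf{Your boundary-minimizer argument.} Apart from this, your treatment of boundary minimizers is a valid route that differs from the paper's. The paper perturbs toward an interior feasible $\xi$ (Lemma \ref{lemma:existence-of-degenerate-set}) to force a zero of $\pi_*^o$ (Lemma \ref{lemma:pos_marginal_cannot_be_locmin}), and then uses $c\notin\mathsf{Range}(\kappa h)$ to pass to $\mathcal{X}^\downarrow$. Your local move puts mass $\varepsilon$ on a missing pair with positive marginals and rebalances total mass and $h$-moment inside $\operatorname{supp}\pi_*$. It gains order $\varepsilon\log\varepsilon$ against $O(\varepsilon)$ changes elsewhere. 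This yields product support, and hence interiority of $\pi_*$ itself on $\mathcal{X}^\downarrow$, in one step.

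Two corrections to how you use $c\notin\mathsf{Range}(\kappa h)$:
\begin{itemize}
\item The rebalancing requires $h$ to be non-constant on $\operatorname{supp}\pi_*$. This is exactly what $c=\kappa\langle h,\pi_*\rangle\notin\mathsf{Range}(\kappa h)$ gives; ``$\pi_*$ is not a point mass'' is not enough.
\item The condition plays no role in the continuity of the constrained infimum in $\delta$. That continuity follows from $c\in(\kappa h_{\min},\kappa h_{\max})$ and the continuity of $J^\nu_\kappa$ on the compact simplex.
\end{itemize}
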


 To prove this result, we use the LDP for the component empirical measure for random $\k$-regular graph marked with i.i.d.\,vertex marks (see Theorem \ref{thm:ldp-Un}) together with the Gibbs conditioning principle for large deviations in Proposition \ref{prop:gibbs cond},  to reduce the problem of identifying the conditional limits to the analysis of a certain constrained optimization problem involving the rate function, on the space of probability measures on marked trees.  We then simplify this problem (in Section \ref{sec:simplification}) and study the simplified optimization problem (in Sections \ref{subs:opt1}--\ref{subs:opt2}) to conclude the specific form of the conditional limits in \eqref{eqn:TIGSMs-deg-and-nondeg}; see Section \ref{subs:pf-of-Gibbs-deg-nondeg} for the proof of this theorem.

\begin{remark}
    The assumption $c\notin  \mathsf{Range}(\k h)$ in Theorem \ref{thm:Gibbs-deg-and-nondeg} is a technical condition that ensures that conditional limits are $\TIS$ Gibbs measures with {\it positive temperature}, that is, with $\beta\in \R$. We believe that this assumption can be removed by enlarging the set  $\Gibbsdeg(\cc)$ to include  $\TIS$ Gibbs measures on $T^\k$ with {\it freezing temperature}, that is, with $\beta=\infty$ or $\beta = -\infty$, just as was done in Theorem \ref{thm:Ising-phase-transition}. However, since these values of $\cc$ correspond to  boundary cases, we choose not to  include them in the main statement; they could be analyzed separately as in the proof of Theorem \ref{thm:Ising-phase-transition}.
\end{remark}

\begin{remark}\label{remark:def-of-degenerate}
   Note that the set $\Gibbsdeg(\cc)$ in \eqref{eqn:TIGSMs-deg-and-nondeg} is comprised of $\TIS$ Gibbs measures on various state spaces whose $h$-consensus is $\cc$. This includes  those whose 
  state space is the original spin space $\X$ and lie in $\TISGM^{\exp(\beta h),\nu}_{T^\k}$, which we refer to as {\em non-degenerate} $\TIS$ Gibbs measures, 
  as well as those that have  state space $\X^\downarrow \subsetneq \X$, and lie  
 in $\TISGM^{\exp(\beta h^\downarrow),\nu^\downarrow}_{T^\k}$, which we refer to as {\em degenerate}  $\TIS$ Gibbs measures. 
  By Remark \ref{remark:TISGM-is-non-degenerate}, note the 
     non-degenerate measures are precisely those whose root  marginal  has full support, and the      degenerate ones are those whose root marginal is supported on 
     a strict subset $\X^\downarrow\subsetneq \X$. 
     We will use the terms degenerate/non-degenerate exclusively for this purpose.
\end{remark}

While Theorem \ref{thm:Gibbs-deg-and-nondeg} provides a very general result, it will be clear from the proof that  specific knowledge of the edge potential $h$ or mark distribution $\nu$
 could be used to identify a smaller subset of $\Gibbsdeg(\cc)$ that contains all conditional limits. 
Indeed, 
in the  special case  when $\X = \{1,-1\}$ and $h(x,y) = m(x,y) = xy$ is the consensus functional, Theorem \ref{thm:Ising-phase-transition} provides such a  refinement, and identifies the precise conditional limit in all regimes.
In particular, it shows that for every constraint value $\cc \in (-\k, \k)$,  conditional limits are always non-degenerate Gibbs measures.   This naturally raises the question of whether a more general refinement in this direction is possible, leading to the following:\\

\noindent 
{\bf Open Question. } 
For which spin systems $\X$, mark distributions $\nu$ and edge potentials $h$ are conditional limits always non-degenerate?  \\

\subsubsection{Non-degeneracy of Conditional Limits}
\label{subsub:degencondlim}

We show in Section \ref{subs:opt2} that degenerate conditional limits can arise for constraint values $c \in (\k h_{\min}, \k h_{\max})$  even in simple two-spin systems. 
This suggests that 
the resolution of the open problem (stated in the last section) is non-trivial.   Nevertheless, our next result makes partial progress towards the open question by identifying a broad subset of two-spin systems for which  conditional limits are always non-degenerate. 
 
\begin{theorem}[Non-degeneracy for two-spin systems with uniform mark distribution]  \label{thm:Gibbs-component-two-spin-uniform} 
    Fix $|\X|=2$,  $\nu=\mathsf{Unif}(\X)$, an edge potential $h$, and let $c\in (\k h_{\min},\k h_{\max})$ be such that $\cc\neq\sref$. Then $U_n$ given $\Conset_n(h,\cc)$ lies in $\Gibbsnondeg(\cc)$ asymptotically, where
    \begin{align}\label{eqn:Gibbs-limit-splitting}
        \Gibbsnondeg(\cc)\coloneqq\bigcup_{\beta\in \R}\TISGM^{\exp\pr{\beta h},\nu}_{T^\k}\bigcap 
         \left\{ \rho \in \P (\Tstar^\kappa[\X]): \pi_{\rho_1} \in \BB_h(c)\right\}.
    \end{align}
\end{theorem}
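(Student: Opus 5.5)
The argument runs the same reduction as for Theorem \ref{thm:Gibbs-deg-and-nondeg}, and adds one step: for two spins with uniform $\nu$, no minimizer of the edge optimization problem can be degenerate. Write $\X=\{a,b\}$. By the LDP for $U_n$ (Theorem \ref{thm:ldp-Un}) and the Gibbs conditioning principle (Proposition \ref{prop:gibbs cond}), conditional limits concentrate on minimizers of the component rate function over the closed constraint set. Proposition \ref{prop:optsimp} reduces this to minimizing an explicit functional $J(\pi)$ over $\pi\in\Psym{\X}$ with $\k\E_\pi[h]=c$. This functional is a combination of $\tfrac{\k}{2}H(\pi\|\cdot)$ and $(\k-1)H(\pi^o\|\nu)$-type entropy terms. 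Moreover, minimizers $\rho$ of the component problem are unimodular extensions of minimizers $\pi_*$ of this edge problem. If every edge minimizer has full support, the interior-minimizer argument (the Lagrangian/Bethe critical-point equations, followed by Lemma \ref{lemma:TISGM-to-cavity} and Lemma \ref{lemma:unimodular-extension-of-splitting-Gibbs-measure}) places $\rho$ in $\TISGM^{\exp(\beta h),\nu}_{T^\k}$ with $\pi_{\rho_1}\in\BB_h(c)$, which is the claim. So the task is to rule out boundary minimizers.

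A symmetric $\pi$ on $\X^2$ has three free coordinates: $\pi(a,a)$, $\pi(b,b)$, and $\pi(a,b)=\pi(b,a)$. The constraint is a single linear equation, so the feasible set is a segment (or a polygon slice of the simplex). I will examine each type of boundary point.

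First, take a degenerate boundary point, with support $\{a\}\times\{a\}$ or $\{b\}\times\{b\}$. Its consensus is $\k h(a,a)$ or $\k h(b,b)$. The hypothesis $c\in(\k h_{\min},\k h_{\max})$ does not exclude these values, so such points must be handled directly. The plan is to show they are never local minimizers when $\nu$ is uniform. From the degenerate point $\delta_{(a,a)}$, I will perturb along a feasible direction that moves mass into $(a,b)$ and $(b,b)$ while preserving $\E_\pi h$. The directional derivative of $J$ there is $-\infty$, because the marginal-entropy term and the edge-entropy term have $x\log x$ singularities with coefficients of opposite sign, $-(\k-1)$ versus $\k/2$. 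I will check that the net singular coefficient is negative on such a direction.

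Second, take a non-degenerate boundary point, where exactly one of $\pi(a,a)$, $\pi(b,b)$, $\pi(a,b)$ vanishes but both marginals are positive. Lemma \ref{lemma:pos_marginal_cannot_be_locmin} already excludes these.

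That leaves the degenerate case as the crux, and it is where uniformity of $\nu$ enters. In general, Example \ref{example:boundary-global-minimizer} shows a degenerate point can be the global minimizer. My plan is to compare $J$ at $\delta_{(a,a)}$, whose value is $\tfrac{\k}{2}\log 4-(\k-1)\log 2=\log 2$ under uniform $\nu$, against an explicit feasible competitor that has full support and the same consensus. Concretely, I will use the unique Ising-type point obtained by solving the constraint on the one-parameter family $\pi_t\propto \exp(t h)$ restricted to symmetric laws with uniform marginals, if such a point is feasible. Otherwise I will use the point on the edge $\pi(b,b)=0$ and then move it inward using Lemma \ref{lemma:pos_marginal_cannot_be_locmin}. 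The aim is to show the competitor has strictly smaller $J$.

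The main obstacle is this inequality. For a fixed point it is a one-variable calculus check, but it must hold uniformly over $h$ and $c$. If the direct comparison fails, I will instead show the infinite negative derivative above along a carefully chosen constraint-preserving direction. Because $|\X|=2$, that direction can be written down explicitly in terms of the differences $h(a,b)-h(a,a)$ and $h(b,b)-h(a,a)$, and the case analysis is over their signs.
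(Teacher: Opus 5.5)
\textbf{There is a genuine gap: the case of the degenerate points $\delta_{(a,a)},\delta_{(b,b)}$, which you correctly identify as the crux, is left unresolved, and your first plan for it cannot work.}

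The rest of your outline matches the paper: LDP and Gibbs conditioning, Proposition \ref{prop:optsimp}, interior minimizers giving TIS Gibbs measures, and Lemma \ref{lemma:pos_marginal_cannot_be_locmin} for boundary points with positive marginals.

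\textbf{Why the local (derivative) argument fails.} Suppose $\delta_{(a,a)}\in\mathcal{B}_h(c)$ and $\mathcal{B}_h(c)$ also contains an interior point. Then $\mathcal{B}_h(c)$ is a segment issuing from $\delta_{(a,a)}$, so there is only one feasible direction $\xi$. By Lemma \ref{lemma:eloge-derivative}, its singular coefficient is $(\frac{\kappa}{2}-1)\xi(b,b)-\xi(a,b)$. In the coordinates $\pi[s,t]$ with $a=-1$, $b=1$, the segment is $\{t=ws\}$ with $w\in(0,1)$, and this coefficient equals $\frac{\kappa-2-\kappa w}{2(1+w)}$. It is positive for $0<w<\frac{\kappa-2}{\kappa}$. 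So for such $h,c$ the point $\delta_{(a,a)}$ is a genuine local minimizer even with uniform $\nu$; this is Lemma \ref{lemma:two-spin-existence-boundary-locmin}. There is no other direction to choose, so your fallback of a ``carefully chosen'' direction is impossible, and a global comparison is unavoidable.

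\textbf{Why your proposed competitors do not work uniformly in $h,c$.} Write $H_2(u)=-u\log u-(1-u)\log(1-u)$.
\begin{itemize}
\item The uniform-marginal point of the segment, $\pi[\frac12,\frac w2]$, has $J=\frac{\kappa}{2}\bigl(\log 2-H_2(w)\bigr)$. This exceeds $J(\delta_{(a,a)})=\log 2$ whenever $H_2(w)<(1-\frac{2}{\kappa})\log 2$, for example for all small $w$ once $\kappa\ge 3$.
\item The only point of the segment on the edge $\pi(b,b)=0$ is $\delta_{(a,a)}$ itself. If you mean the far endpoint (where $\pi(a,a)=0$), moving inward via Lemma \ref{lemma:pos_marginal_cannot_be_locmin} only gives a point better than that endpoint, not better than $\log 2$.
\end{itemize}
Concretely, take $\kappa=10$, $h(-1,-1)=0$, $h(1,-1)=h(-1,1)=-0.105$, $h(1,1)=0.79$, $c=0$. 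Then $w=0.79<0.8$, so $\delta_{(-1,-1)}$ is a local minimizer. Yet $J$ is approximately $0.90$ at the uniform-marginal point and $2.0$ at the far endpoint, both above $\log 2\approx 0.69$.

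\textbf{The missing idea: use the product measure on the segment.} The product measures $\pi^o\otimes\pi^o$ form the curve $\{t=s(1-s)\}$. On this curve $H(\pi\|\pi^o\otimes\pi^o)=0$, hence $J^\nu_\kappa(\pi)=H(\pi^o\|\nu)=\log 2-H_2(s)$. The segment $t=ws$ meets the curve at $s=1-w\in(0,1)$. This is a feasible interior point with $J=\log 2-H_2(w)<\log 2$ (about $0.18$ in the example above). It is exactly the curve $\mathcal C$ used in the paper's proof of Proposition \ref{prop:two-spin-local-not-global}. Since it works for every $w\in(0,1)$, it rules out $\delta_{(a,a)}$ (and, symmetrically, $\delta_{(b,b)}$) as a global minimizer without needing the local-minimizer classification at all.
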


The proof of this theorem is given in Section \ref{sec:verification}, as a corollary to a more general result (see Proposition \ref{prop:constrained-min-interior}) that shows that degenerate conditional limits do not arise for a class of multi-spin systems that satisfy a certain property 
 (see \eqref{eqn:cond} of Proposition \ref{prop:constrained-min-interior} in Section \ref{sec:sufficient-condition-for-nondegeneracy}).  
We believe that this class should include all multi-spin systems  with a uniform mark distribution, and in fact have the following conjecture.

\begin{conjecture}[Non-degeneracy for multi-spin systems with uniform mark distribution]
\label{conj:Gibbs-component}
    Fix any finite set $\X$ and edge potential $h$. If 
        $\nu= \mathsf{Unif}(\X)$, then for every  $c\in (\k h_{\min},\k h_{\max})$,  $U_n$ given $\Conset_n(h,\cc)$ lies asymptotically in the set $\Gibbsnondeg(\cc)$ defined in \eqref{eqn:Gibbs-limit-splitting}. 
  \end{conjecture}

\section{From conditional limits to optimization problems}
\label{sec:cond-to-opt}
In this section, we reduce the analysis of conditional limit theorems to  certain constrained optimization problems using the theory of large deviations and Gibbs conditioning principles.  The main result of this section is given in Section \ref{subs:finalred} (see Proposition \ref{prop:optsimp} therein).  Sections \ref{sec:ldp-graph}-\ref{sec:simplification} 
contain several preliminary  results that are required for the proof of Proposition \ref{prop:optsimp}, which is given in Section \ref{subs:pf-of-prop-optsimp}.

\subsection{Reduction to an optimization problem over edge measures}
\label{subs:finalred}

\subsubsection{Unimodular extension}\label{sec:unimodular-extension}

To define the optimization problem, we first need to define a certain natural extension of probability measures on $\TkappaX{1}$ to probability measures on $\TkaXinf$. Let $\mu \in \P(\TkappaX{1})$ be such that $\pi_\mu$ is symmetric. 
First, given any $\tilde{\tau} \in  \Tstarone^{\k-1}[\X]$ 
and $x' \in \X$, we let $\tilde{\tau} \oplus x'$ denote the tree in $\Tstarone^\k[\X]$
obtained by adding to $\tilde{\tau}$  one extra leaf (i.e., vertex attached to the root) with the mark $x'$.  Note that  the map $\tilde{\tau} \mapsto \tilde{\tau} \oplus x'$ in fact maps $\Tstarone^{\k -1}[\X]$ to $\Tstarone^{\k}[\X]$ for every $\k \geq 1$. 
Also,   define  
\begin{align*}
    E_1(x',x)(\tau) \coloneqq\{v \sim o : (\tau_v, \tau_o) = (x',x)\}, \quad x,x'\in \X, \tau \in \Tstarone^{\k}[\X], 
\end{align*}
to be equal to the number of leaves of $\tau$ with mark $x'$ when the root mark is $x$, and set it equal to zero otherwise.  Then define   the conditional measure on stars,  
$\widehat{P}_\mu(x, x'),$  by 
\begin{align}
\widehat{P}_\mu(x, x')(\TBdef) = \frac{\indf_{\{\TBdef_o  = x\}}}{\k \pi_\mu(x,x')} \mu (\TBdef \oplus x') E_1(x', x)(\TBdef \oplus x'), \, \quad  \TBdef \in \Tstarone^{\k-1}[\X].
\label{eqn:PU}
\end{align} 
It can be verified that $\widehat{P}_\mu(x,x') \in \P(\Tstarone^{\k-1} [\X])$ is the conditional law (under $\mu$) of the rooted subtree obtained by removing a randomly chosen  leaf  conditioned on the root mark being $x$ and the mark on the leaf being $x'$ (see, e.g., \cite[Lemma A.2]{RamYas23}).   
Recall the true law $\eta$ in  Remark \ref{remark:true-law} and the definition of the depth-$h$ marginal in Definition \ref{def:rho_h}. 

Set  $\mu_1=\mu$ and for $h \geq 2,$ iteratively define $\mu_h\in \P(\TkappaX{h})$ as follows:
\begin{align}
\label{muh}
    \frac{d\mu_h}{d\eta_h}(\tau) := \frac{d\mu_{h-1}}{d\eta_{h-1}} (\tau_{h-1}) \prod_{v: \, d_{\tau}(o,v) = h-1} \frac{d\widehat{P}_\mu( X_v, X_{a(v)} )}{d\widehat{P}_{\eta_1}(X_v,X_{a(v)})} ( \tau(v \setminus a(v))_1), \quad \tau=(\tau,X) \in \TkappaX{h},
\end{align}
where $a(v)$ denotes the parent of $v$ (as defined in Section \ref{subsubsec:topology}) and $\tau(v \setminus a(v))_1$ denotes the depth-1 tree consisting of $v$ and all its neighbors expect $a(v)$, with root at $v.$ It can be shown that $\mu_h$ is indeed a probability measure on $\TkappaX{h}$. Note that $\widehat{P}_{\eta_1}(x,x')$ does not depend on $x'$.  
Also, observe that $\{\mu_h;h\in \N\}$ forms a consistent family of probability measures on marked rooted trees. Thus by the Kolmogorov extension theorem, the following is well defined.

\begin{definition}
\label{defn:UniExt}
Given $\mu \in \P(\TkappaX{1})$, the unimodular extension of $\mu$, denoted 
$\UGW_1(\mu)$,  is defined to be the unique  probability measure in $\P(\TkaXinf)$  such that for every $h \in \mathbb{N},$ 
its marginal on $\TkappaX{h}$ coincides with the measure 
$\mu_h$ defined in \eqref{muh}.  
 \end{definition}
 
Note that the terminology ``extension'' is justified by the fact that 
$(\UGW_1(\mu))_1 = \mu.$

\subsubsection{The edge optimization problem}

Recall the definition of 
$\Psym{\X}$ from \eqref{def-pps}, and 
 let $J_\k^\nu:\Psym{\X}\rightarrow [0,\infty)$ be defined by 
\begin{align}
J_\k^\nu(\pi) \coloneqq H(\piz\|\nu)+\frac{\kappa}{2}H(\pi\|\piz\otimes \piz),\quad \forall \pi\in \Psym{\X}.\label{eqn:J-nu-k}
\end{align}
Note that $J^\nu_\k$ is well defined since $\X$ is a finite set and relative entropies are nonnegative. In particular, 
\begin{equation}\label{eqn:unique-minimizer-of-J}
    J^\nu_\k(\pi)=0 \quad \text{ if and only if }\quad  \pi=\nu\otimes \nu.
\end{equation}
Moreover, given an ``edge measure" $\pi\in \Psym{\X}$, define 
$\mup \in \P \cpr{\TkappaX{1}}$ by 
\begin{align}
 \frac{d\mup}{d\eta_1}(\tau) := \frac{d\piz}{d\nu}(X_o)\prod_{v =1}^\k   \frac{d\pic}{d\eta_1^{1|o}} (X_v\mid X_o), \quad  \tau=(\tau,X)\in \TkappaX{1}. \label{eqn:1MRF}
\end{align}
Note that $\eta_1^{1|o}(x|x')=\nu(x)$ for any $x,x'\in \X$.

We first establish a connection between $\mu^{(\pi)}$ and $\TIS$ Gibbs measures.

\begin{lemma}[$\TIS$ Gibbs measures as unimodular extensions]\label{lemma:unimodular-extension-of-splitting-Gibbs-measure}
For any $\TIS$ Gibbs measure $\pmb{\rho}$ on $T^\k$ with some given specification    
we have 
\begin{equation}\label{eqn:Gibbs-MRF-1}
    \pmb{\rho}_1=\mu^{(\pi_{\pmb\rho_1})},
\end{equation}
with $\mu^{(\pi_{\pmb\rho_1})}$ as defined in \eqref{eqn:1MRF}, and 
\begin{equation}\label{eqn:Gibbs-uni-ext}
    \pmb{\rho}=\UGW_1(\pmb{\rho}_1),
\end{equation}
where $\UGW_1$ is the map given by Definition \ref{defn:UniExt}.
\end{lemma}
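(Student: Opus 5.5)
The plan is to compute the finite-depth marginals of $\pmb\rho$ directly from the splitting form \eqref{eqn:marginal-splitting-Gibbs} with a homogeneous boundary law $\ell$. We then check that they agree with $\mu^{(\pi)}$ at depth one and with the recursion \eqref{muh} at every depth, where $\pi:=\pi_{\pmb\rho_1}$.

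\textbf{Step 1: a Markov-chain description of $\pmb\rho$.} By Lemma \ref{lemma:TISGM-to-cavity}, $\ell$ is a fixed point of $\BP$ and $\pi(x,z)\propto \ell(x)\psi(x,z)\ell(z)$. Put $m(x):=\sum_z\psi(x,z)\ell(z)$. Summing out the leaves in \eqref{eqn:marginal-splitting-Gibbs}, and using $\BP\ell=\ell$ to resum each subtree hanging below a depth-$r$ vertex, we should get two descriptions:
\begin{itemize}
\item the root marginal is $\pi^o(x)\propto \bar\psi(x)m(x)^\k\propto \ell(x)m(x)$;
\item given the marks at depth at most $r$, each vertex at depth $r$ independently has children whose marks are i.i.d.\ with the transition kernel
\[
K(z\mid x)=\frac{\psi(x,z)\ell(z)}{m(x)} .
\]
\end{itemize}
A direct check gives $\pi(x,z)=\pi^o(x)K(z\mid x)$, so $K=\pi^{1|o}$. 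In words, $\pmb\rho$ is the tree-indexed Markov chain started from $\pi^o$, in which each vertex produces $\k$ (root) or $\k-1$ (non-root) i.i.d.\ offspring marks with kernel $\pi^{1|o}$.

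\textbf{Step 2: the depth-one identity \eqref{eqn:Gibbs-MRF-1}.} By Step 1, $\pmb\rho_1(\tau)=\pi^o(X_o)\prod_{v=1}^\k\pi^{1|o}(X_v\mid X_o)$. Dividing by $\eta_1(\tau)=\prod_{v\in\tau}\nu(X_v)$, and using $\eta_1^{1|o}(\cdot\mid x)=\nu$, gives exactly \eqref{eqn:1MRF}.

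\textbf{Step 3: extending to all depths, \eqref{eqn:Gibbs-uni-ext}.} First compute $\widehat P_\mu(x,x')$ from \eqref{eqn:PU} for $\mu=\mu^{(\pi)}$. The leaves are exchangeable and $E_1(x',x)(\tau\oplus x')$ counts the leaves carrying mark $x'$, so the factor $\k\pi(x,x')$ cancels. The result is that $\widehat P_\mu(x,x')$ is the law of a $(\k-1)$-star with root mark $x$ and i.i.d.\ leaves of law $\pi^{1|o}(\cdot\mid x)$; in particular it does not depend on $x'$. Similarly, $\widehat P_{\eta_1}(x,x')$ has i.i.d.\ $\nu$ leaves. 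Substituting these into \eqref{muh}, an induction on $h$ shows that $\mu_h$ equals the depth-$h$ marginal of the Markov chain from Step 1. By Kolmogorov uniqueness, $\UGW_1(\pmb\rho_1)=\pmb\rho$.

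\textbf{Main obstacle.} The delicate part is the bookkeeping in Step 1: normalization constants, the root having $\k$ children while other vertices have $\k-1$, and the exact way the fixed-point equation \eqref{eqn:cavity} lets the depth-$(r+1)$ formula collapse to the depth-$r$ one. The unlabeled versus labeled star convention of Remark \ref{rmk: labeling scheme} also has to be handled consistently when computing $\widehat P_\mu$.
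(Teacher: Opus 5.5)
Your proof is correct and follows essentially the paper's route. Both arguments verify the depth-one identity via \eqref{eqn:edge-marginal-cavity} and the fixed-point equation \eqref{eqn:cavity}, compute $\widehat P_{\pmb\rho_1}(x,x')$ as an i.i.d.\ $\pi^{1|o}(\cdot\mid x)$ star so that the factors in \eqref{muh} become products of $\pi^{1|o}/\nu$, and then induct on depth. The only difference is organizational: you extract the fixed-point computation once, as a tree-indexed Markov chain description of $\pmb\rho$, while the paper carries out that collapse inside each induction step on $d\rho'_{r+1}/d\eta_{r+1}$.
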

 The proof is provided in Appendix \ref{sec:proof-unimodular-extension-of-splitting-Gibbs-measure}.

We now present the main result of this section.

\begin{proposition}[Reduction to an optimization over edge measures]
\label{prop:optsimp}
For all $c\neq  \sref,$  define
\begin{align}
\label{opt:pi}
    \RRR_{edge} (\cc) := \min_{\pi\in \BB_{h}(\cc) }J^\nu_\k(\pi)\quad \text{ and } \quad \optset_{edge}(\cc)\coloneqq \argmin_{\pi\in \BB_h(\cc)}J^\nu_\k(\pi)
\end{align}
with $\BB_{h}(\cc)$ being the constraint set defined in \eqref{eqn:Medge}, 
and let
\begin{equation}\label{eqn:opti-pi-to-opti-rho}
    \optset(\cc):=\pr{\UGW_1(\mu^{(\pi)}):\pi\in \optset_{edge}(\cc)}.
\end{equation}
Then for any open set $\openset \subset\P(\TkaXinf)$ with $\optset (c) \subset \openset$, the following conditional limits hold:
\begin{align}
    &\lim_{\delta\to 0}\limsup_{n\to \infty}\frac{1}{n}\log\mathbb{P}\left(U_n\notin \openset \ \vert \ \{\mathbb{E}_{L_n}\sqpr{S_h}\geq c-\delta\right)<0.\label{eq:general cond limit}\\
    &\lim_{\delta\to 0}\limsup_{n\to \infty}\frac{1}{n}\log\mathbb{P}\left(U_n\notin \openset \ \vert \ \{\mathbb{E}_{L_n}\sqpr{S_h}\leq c+\delta\right)<0.\label{eq:general cond limit other way}
\end{align}
\end{proposition}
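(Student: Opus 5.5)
The plan is to combine the component-empirical-measure LDP (Theorem \ref{thm:ldp-Un}, with rate function $I^\k$) with the Gibbs conditioning principle of Proposition \ref{prop:gibbs cond}, and then reduce the resulting constrained minimization over $\P(\TkaXinf)$ to the edge problem \eqref{opt:pi}.

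\textbf{Step 1: the functional-level conditioning statement.} The map $\rho\mapsto \E_{\rho_1}[S_h]=\k\E_{\pi_{\rho_1}}[h(X_o,X_1)]$ is continuous on $\P(\TkaXinf)$, since $\TkappaX{1}$ is finite and $S_h$ is bounded. Hence the sets $\{\rho:\E_{\rho_1}[S_h]\ge c-\delta\}$ are closed. Proposition \ref{prop:gibbs cond} then gives \eqref{eq:general cond limit} and \eqref{eq:general cond limit other way} with $\optset(c)$ replaced by
\[
\argmin\{I^\k(\rho):\E_{\rho_1}[S_h]=c\}.
\]
To obtain this, I will check two things. The first is that the infimum over $\{\E_{\rho_1}[S_h]\ge c-\delta\}$ converges, as $\delta\to0$, to the infimum over $\{\ge c\}$; this follows from lower semicontinuity and compactness of level sets of $I^\k$. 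The second is that when $c>\sref$ the minimizers lie on $\{=c\}$. For this I will use a convexity/interpolation argument: moving $\pi$ along the segment toward $\nu\otimes\nu$ decreases $J$ near optimality whenever the constraint is slack. Since $J$ need not be convex, this must be done carefully. The most direct approach is to show that $R_{edge}$ is nondecreasing in $c$ for $c>\sref$ and continuous.

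\textbf{Step 2: reduction from components to stars to edges.} Using the explicit form of $I^\k$ from \cite[Theorem 4.13]{RamYas23}, I will argue three things in turn:
\begin{itemize}
\item $I^\k(\rho)\ge I^\k_1(\rho_1)$, the depth-one rate function, with equality exactly when $\rho=\UGW_1(\rho_1)$. This is the unimodular-extension minimality property.
\item For $\mu\in\P(\TkappaX{1})$ with symmetric edge marginal $\pi_\mu$, the depth-one rate $H(\mu\|\eta_1)-\frac{\k}{2}H(\pi_\mu\|\nu\otimes\nu)$ satisfies $I_1(\mu)\ge J^\nu_\k(\pi_\mu)$, with equality iff $\mu=\mu^{(\pi_\mu)}$. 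Here I will decompose $H(\mu\|\eta_1)$ by the chain rule into the root-marginal entropy plus the conditional entropy of the leaves given the root. The conditional term is at least $\k\,H(\pi^{1|o}\|\nu\mid\pi^o)$, with equality iff the leaves are conditionally i.i.d.\ given the root (subadditivity of entropy).
\item Combining the two, the identity $\k H(\pi\|\pi^o\otimes\nu)-\frac{\k}{2}H(\pi\|\nu\otimes\nu)+H(\pi^o\|\nu)=J^\nu_\k(\pi)$ follows after algebra using the symmetry of $\pi$.
\end{itemize}
Since the constraint depends only on $\pi_{\rho_1}$, the minimal value equals $R_{edge}(c)$, and the minimizers are exactly $\UGW_1(\mu^{(\pi)})$ for $\pi\in\optset_{edge}(c)$, which is $\optset(c)$. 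The infinite rate outside the unimodular/symmetric class handles the remaining $\rho$.

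\textbf{Main obstacle.} I expect the hard part to be Step 2's equality case, in particular showing that the minimizer forces $\rho=\UGW_1(\rho_1)$ together with the entropy bookkeeping. The exact form of $I^\k$ must be matched. Establishing attainment and the boundary-of-constraint property in Step 1 without convexity is secondary but delicate.
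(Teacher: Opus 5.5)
Your architecture (Gibbs conditioning, then \eqref{eqn:component-to-neighborhood}, then a star-to-edge entropy reduction, then passage to the equality constraint) matches the paper's. The gap is in the step you postpone in Step 1: showing that every minimizer of $J^\nu_\kappa$ over $\bigcup_{c'\ge c}\BB_h(c')$ already lies in $\BB_h(c)$. This is what the proof of Lemma \ref{lemma:optimizing_c=c'} establishes, it is the one genuinely nonconvex point, and neither of your suggestions delivers it.

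\emph{Interpolation toward $\nu\otimes\nu$ fails.} It need not decrease $J^\nu_\kappa$ at a slack point. Take $\X=\{1,-1\}$, $\nu$ uniform, $h(x,y)=xy$, and $\pi=\delta_{(1,1)}$, which is slack for every $c\in(0,\kappa)$. Lemma \ref{lemma:eloge-derivative} with $\xi=\nu\otimes\nu$ gives the rate $\frac{\kappa-1}{2}-\frac{\kappa}{2}\cdot\frac34=\frac{\kappa-4}{8}>0$ for $\kappa\ge 5$, so $J^\nu_\kappa$ increases along that segment. At slack interior points the argument gives nothing either: a slack local minimizer is a critical point, so the first-order change toward $\nu\otimes\nu$ vanishes.

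\emph{Your fallback is also insufficient.} To conclude with the set $\optset(c)$ you need every minimizer over $\A(c)$ to lie at level $c$. A merely nondecreasing $R_{edge}$ still permits minimizers at some $c'>c$ with $R_{edge}(c')=R_{edge}(c)$, so you would need strict monotonicity. Its natural proof (slide from an optimizer at level $c'$ toward $\nu\otimes\nu$) fails on the same example, since $\delta_{(1,1)}\in\optset_{edge}(\kappa)$ there.

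The missing idea is a global statement about the unconstrained problem. A minimizer $\tilde\pi$ at a level $c'>c$ has a relatively open neighborhood in $\Psym{\X}$ inside the feasible set. So it is an unconstrained local minimizer of $J^\nu_\kappa$ on $\Psym{\X}$. The only such point is $\nu\otimes\nu$, which is infeasible because $\kappa\,\E_{\nu\otimes\nu}[h]=\sref\neq c'$.

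For the interior part, the paper uses \cite[Proposition 1.7]{DemMonSlySun14} with $\psi\equiv 1$. Stationary points then have the form $\ell\otimes\ell$, with $\ell$ a fixed point of a cavity map that degenerates to $\BP\ell=\nu$. Directly, the Lagrange condition reads $\pi(x,z)\propto(\pi^o(x)\pi^o(z))^{(\kappa-1)/\kappa}(\nu(x)\nu(z))^{1/\kappa}$, which forces $\pi^o=\nu$.

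Boundary points must be excluded separately, for instance by Lemma \ref{lemma:eloge-derivative}. If $\pi^o$ has full support, move toward an interior $\xi$. Otherwise move toward $\frac12(\delta_{(x,y)}+\delta_{(y,x)})$ with $\pi^o(x)>0=\pi^o(y)$, which gives rate $-\frac12$.

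With this supplied, the rest of your plan goes through. The component-to-star step, including its equality case, is exactly \eqref{eqn:component-to-neighborhood} in Theorem \ref{thm:ldp-Un}. So it is not an obstacle and needs no explicit form of $I^\kappa$. Your chain-rule-plus-subadditivity proof of $I^\kappa_1(\mu)\ge J^\nu_\kappa(\pi_\mu)$ is a valid alternative to the paper's decomposition $H(\mu\|\eta_1)=H(\mu\|\mu^{(\pi)})+H(\mu^{(\pi)}\|\eta_1)$ in Lemma \ref{lemma:1MRF}.
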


We refer to the optimization problem $\RRR_{edge}(\cc)$ in \eqref{opt:pi} as the {\em edge optimization problem}. The proof of this result is given in Section \ref{subs:reduction}.    It relies on a LDP for random regular graphs with i.i.d. vertex marks that is stated in Section \ref{sec:ldp-graph}, and a related Gibbs conditioning principle stated in Section \ref{subs-Gibbscond}. The latter establishes   \eqref{eq:general cond limit}-\eqref{eq:general cond limit other way} with $\optset(\cc)$ expressed as the minimizer of a certain optimization problem over measures on marked $\kappa$-regular trees, see \eqref{eqn:Iinf-optimization}. In Section \ref{sec:simplification}, we simplify the latter optimization problem to show it has the representation given in Proposition \ref{prop:optsimp}.

\subsection{LDPs for the neighborhood and component empirical measure sequence}
\label{sec:ldp-graph}

We now state a LDP associated with the sequence of $\P(\TkappaX{1})$-valued neighborhood empirical measures  $\{L_n\}$ defined in Section \ref{sec:empirical-measures}. Recall the ``true law'' $\eta_1 \in \P(\TkappaX{1})$  defined in  Remark \ref{remark:true-law}, and the relative entropy functional from \eqref{eqn:relative-entropy}.  Also, recall that $\pi_\mu \in \P(\X^2)$ denotes the edge marginal under  $\mu \in \P(\TkappaX{1}),$ that is, the law of $(X_o,X_1)$ when $(\tau, X)$ has law $\mu.$ Define  $I^\k_1: \P(\TkappaX{1})\to [0,\infty]$ by  

\begin{align}
     I^\k_1(\mu) :=\begin{dcases}
         H(\mu\|\eta_1)-\frac{\kappa}{2}H(\pi_{\mu}\|\pi_{\eta_1})  &\text{if $\pi_\mu$ is symmetric,}\\ \infty, &\text{otherwise,}
     \end{dcases}
      \label{def-Ikk}
\end{align}
where $H$ is the relative entropy defined in \eqref{eqn:relative-entropy}.
We now claim that $H(\pi_\mu\|\pi_{\eta_1})<\infty$ and hence, 
$I^\k_1$ is well defined. To see why the claim holds,  note that since $\nu(x) > 0$ for all $x \in \X$ and $\pi_{\eta_1} = \nu \otimes \nu,$ it follows that $\pi_\mu \ll \pi_{\eta_1}$ for all $\mu$. Together with the fact that  $\pi_\mu$ is supported on the finite set $\X^2$, this implies the claim. The following result is proved in \cite[Theorem 3.2] {RamYas23}.  

 \begin{theorem}(LDP for the neighborhood  empirical measure sequence  \cite[Theorem 3.2]{RamYas23}).
 \label{thm:ldp-ln}
     The sequence $\{L_n\}$ satisfies an LDP on $\P(\TkappaX{1})$ with rate function $I^\k_1.$ In other words, $I^\k_1: \P(\TkappaX{1}) \rightarrow [0,\infty]$ is a lower semicontinuous function such that the following holds: 
     \begin{enumerate}
         \item (Compactness of level sets). For any $M < \infty,$ $\{\mu:I^\k_1(\mu) \leq M\}$ is a compact subset of $\P(\TkappaX{1})$;
         \item (LDP lower bound). For any open set $\mathcal{O} \subset \P(\TkappaX{1}),$ we have
         \begin{align*}
             \liminf_{n \to \infty} \frac{1}{n} \log \mathbb{P}(L_n \in \mathcal{O}) \geq -\inf_{\mu \in \mathcal{O}} I^\k_1(\mu);
         \end{align*}
         \item (LDP upper bound). For any closed set $\mathcal{F} \subset \P(\TkappaX{1}),$ we have
         \begin{align*}
             \limsup_{n \to \infty} \frac{1}{n} \log \mathbb{P}(L_n \in \mathcal{F}) \leq -\inf_{\mu \in \mathcal{F}} I^\k_1(\mu).
         \end{align*}
     \end{enumerate}
 \end{theorem}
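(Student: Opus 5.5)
The statement just given is Theorem \ref{thm:ldp-ln}, quoted from \cite[Theorem 3.2]{RamYas23}. The plan is therefore to reconstruct its proof from two ingredients: the configuration model, and contraction from a finer LDP. The route goes through an LDP for the pair consisting of the vertex-mark empirical measure and the edge (half-edge pair) empirical measure, followed by a conditional Sanov-type argument for the star configurations.

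\emph{Step 1: pass to the configuration model.} Work with the configuration (pairing) model $\tilde G_n$ on $n\k$ half-edges. The probability that it is simple converges to a positive constant, and conditional on simplicity it is uniform on $\k$-regular graphs. Consequently, exponential-scale statements transfer from $\tilde G_n$ to $G_n$ with no change in the rate. Under the configuration model, the vertex marks are i.i.d.\ $\nu$, and the pairing is a uniform perfect matching of the half-edges, independent of the marks.

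\emph{Step 2: count configurations.} Fix a target $\mu\in\P(\TkappaX{1})$ with symmetric edge marginal $\pi_\mu$. We estimate the probability that $L_n\approx\mu$ by counting in three stages.
\begin{enumerate}
\item Assign to each vertex its mark $X_o$ and the multiset of "leaf-mark labels" on its $\k$ half-edges, i.e.\ a star type. By Sanov's theorem, the number of such assignments with type close to $\mu$, weighted by $\nu$ and by the uniform label choice, behaves like $e^{-nH(\mu\|\eta_1)}$. Here $\eta_1$ is the law in which the root mark and the leaf labels are i.i.d.\ $\nu$.
\item The labels must be consistent with the actual pairing: a half-edge at a vertex of mark $x$ carrying label $x'$ must be matched to a half-edge at a vertex of mark $x'$ carrying label $x$. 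By symmetry of $\pi_\mu$, the classes of half-edges of type $(x,x')$ and $(x',x)$ have equal sizes, $n\k\pi_\mu(x,x')$ each.
\item Compute the probability that a uniform matching respects these classes. It is a ratio of matching counts, and by Stirling's formula it equals $\exp\bigl(-n\tfrac{\k}{2}[\text{entropy of }\pi_\mu\text{ relative to the label-independent pairing}]\bigr)$. Combining this with the weighting in (1), the exponent becomes $H(\mu\|\eta_1)-\tfrac{\k}{2}H(\pi_\mu\|\nu\otimes\nu)$. This is exactly $I^\k_1(\mu)$, since $\pi_{\eta_1}=\nu\otimes\nu$.
\end{enumerate}
If $\pi_\mu$ is not symmetric, the class sizes cannot match, so $L_n$ stays a fixed distance from $\mu$ and the rate is infinite.

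\emph{Step 3: LDP bounds and goodness.} The space $\TkappaX{1}$ is finite, so $\P(\TkappaX{1})$ is a compact simplex, and the local estimates of Step 2 can be upgraded to full upper and lower bounds by a standard covering argument (exponential tightness is automatic). Lower semicontinuity of $I^\k_1$ does not follow termwise, because the rate is a difference of two entropies. It does follow once the local estimates hold, since any function obtained as a local rate is lower semicontinuous. Alternatively, one can rewrite $I^\k_1$ via the chain rule as $H(\pi^o_\mu\|\nu)+\tfrac{\k}{2}H(\pi_\mu\|\pi_\mu^o\otimes\pi^o_\mu)+H(\mu\|\mu^{(\pi_\mu)})$-type nonnegative terms, each lower semicontinuous. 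Compactness of level sets then follows from compactness of the simplex.

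\emph{Main obstacle.} The delicate part is the uniformity of the Stirling asymptotics in Step 2 near the boundary, where some entries $\pi_\mu(x,x')$ vanish or the class sizes $n\k\pi_\mu(x,x')$ fail to be integers or to have the required parity. I would handle this with an approximation argument: perturb $\mu$ to rational measures with full support in the symmetric face, and use continuity of $I^\k_1$ restricted to that face, which holds because all spaces are finite.
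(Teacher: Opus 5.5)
The paper does not prove this statement; it imports it from \cite[Theorem 3.2]{RamYas23}. Your Step 2 is a correct and direct route to the succinct rate function. Summing over marks and half-edge labels gives, for the configuration multigraph and every realizable $\ell$ (integer counts, $m_{xx'}=m_{x'x}$, $m_{xx}$ even), the exact identity
\[
\mathbb{P}(\tilde L_n=\ell)=\eta_1^{\otimes n}\bigl(\text{empirical type}=\ell\bigr)\, e^{-n\kappa\sum_{x}\pi_\ell^o(x)\log\nu(x)}\,\frac{\prod_{x<x'}m_{xx'}!\,\prod_{x}(m_{xx}-1)!!}{(n\kappa-1)!!},\qquad m_{xx'}=n\kappa\,\pi_\ell(x,x').
\]
This equals $\exp\bigl(-nI^\kappa_1(\ell)+O(\log n)\bigr)$ uniformly in $\ell$. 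So the obstacle you single out is not really one. There are finitely many factorials, and each has an $O(\log n)$ Stirling error even when some $m_{xx'}$ vanish. Realizability is handled by your rational approximation. Lower semicontinuity is also immediate: $I^\kappa_1$ is continuous on the closed set where $\pi_\mu$ is symmetric and equals $+\infty$ off it.

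The genuine gap is in Step 1. The fact that $\mathbb{P}(\tilde G_n\text{ simple})$ converges to a positive constant transfers only the upper bound, via $\mathbb{P}(L_n\in\mathcal F)\le \mathbb{P}(\tilde L_n\in\mathcal F)/\mathbb{P}(\tilde G_n\text{ simple})$. For the lower bound you need
\[
\mathbb{P}(\tilde L_n\in\mathcal O,\ \tilde G_n\text{ simple})\ge e^{-n(I^\kappa_1(\mu)+\varepsilon)},
\]
that is, simplicity must not be exponentially unlikely \emph{conditionally on the rare event}. The unconditional statement says nothing about this. For example, the event of having at least $\varepsilon n$ loops has positive, exponentially small probability under $\tilde G_n$, but probability zero under $G_n$.

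What is missing is a simplicity estimate for the colored configuration model. Given the marks and half-edge labels, the pairing is uniform within the classes $(x,x')\leftrightarrow(x',x)$. You must show it is simple with probability at least $e^{-o(n)}$ for the labelings you use to approximate $\mu$. This fails for arbitrary labelings: if the class $(x,x)$ consists of two half-edges at one vertex, a loop is forced. So the approximants must be chosen with care.

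One way is to first replace $\mu$ by $(1-\varepsilon)\mu+\varepsilon\eta_1$. This keeps the edge marginal symmetric and makes every class have size of order $n$. Since degrees are bounded by $\kappa$, the expected numbers of loops and double edges are then $O(1)$. A Poisson-approximation or switching argument then gives a positive lower bound on the conditional probability of simplicity. Without this step you have the full LDP for the configuration multigraph and the upper bound for $G_n$, but not the lower bound for the uniform random $\kappa$-regular graph.
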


\begin{remark}[Form of the rate function]
    \label{rem:ldpproof}
Versions of this LDP result were  
established earlier in \cite{BalPerRei26, BalOlietal22} using the colored configuration model introduced in \cite{BorCap15} and its marked version in \cite{DelAna19}.   
 However, the form of the rate functions obtained therein involved many terms that were expressed in terms of some combinatorial quantities. 
 The more succinct form of the rate function given in \eqref{def-Ikk} is crucial for our analysis in this paper.
\end{remark}

\subsubsection{LDP for the component empirical measure sequence}

Next, we state the LDP for $\{U_n\}.$ Recall the unimodular extension defined in Section \ref{sec:unimodular-extension}.
\begin{theorem}[{(LDP for the component empirical measure sequence  \cite[Theorem 4.13]{RamYas23})}]
 \label{thm:ldp-Un}
     The sequence $\{U_n\}$ satisfies an LDP on $\P(\Gstar[\X])$ with rate function $I^\k.$ Moreover, $I^\k(\rho)=\infty$ if $\rho\notin \P(\TkaXinf)$. Furthermore, for any $\mu\in \P(\TkappaX{1})$ such that $\pi_\mu$ is symmetric, we have $I^\k_1(\mu) < \infty$ and
         \begin{align}\label{eqn:component-to-neighborhood}
         I^\k_1(\mu)=\min\pr{I^\k(\rho):\rho_1=\mu}, \quad \text{ and }\quad \UGW_1(\mu)=\argmin\pr{I^\k(\rho):\rho_1=\mu}.
     \end{align}
 \end{theorem}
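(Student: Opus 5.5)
The plan is to obtain the LDP for $\{U_n\}$ as a projective limit of LDPs for depth-$h$ neighborhood empirical measures. I would then read off the variational identity \eqref{eqn:component-to-neighborhood} from a chain-rule decomposition of the depth-$h$ rate functions.

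\textbf{Step 1 (depth-$h$ LDPs).} For each $h\in\N$, let $L_n^{(h)}:=\frac1n\sum_{v}\delta_{(G_n,X^n,v)_h}$, so that $L_n^{(1)}=L_n$ and $L_n^{(h)}=(U_n)_h$. I would prove an LDP for $\{L_n^{(h)}\}$ on $\P(\calG_{*,h}[\X])$ with rate function $I^\k_h$, extending the configuration-model counting behind Theorem \ref{thm:ldp-ln}.

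\emph{Reduction to trees.} Random $\k$-regular graphs have $O(1)$ cycles of length at most $2h+1$. I would show that
\[
\frac1n\log\mathbb{P}\Big(L_n^{(h)}\big(\calG_{*,h}[\X]\setminus \calT^\k_{*,h}[\X]\big)>\varepsilon\Big)\to-\infty \quad\text{for every }\varepsilon>0 .
\]
Hence $I^\k_h=\infty$ off measures on $\calT^\k_{*,h}[\X]$.

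\emph{Counting on trees.} On tree-supported $\mu_h$, I would count configurations by matching half-edges labelled by the depth-$(h-1)$ types seen from both endpoints. This should give
\[
I^\k_h(\mu_h)=H(\mu_h\|\eta_h)-\frac{\k}{2}H\big(\pi^{(h-1)}_{\mu_h}\big\|\pi^{(h-1)}_{\eta_h}\big),
\]
where $\pi^{(h-1)}$ denotes the joint law of the two depth-$(h-1)$ subtrees across a root edge. The formula should hold when this pair law is symmetric (unimodularity), with $I^\k_h=+\infty$ otherwise.

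\textbf{Step 2 (projective limit).} The local topology on $\Gstar[\X]$ is the projective limit of the depth-$h$ truncations. The Dawson--G\"artner theorem then yields an LDP for $\{U_n\}$ with good rate function
\[
I^\k(\rho)=\sup_h I^\k_h(\rho_h).
\]
Since every $I^\k_h$ is infinite off tree-supported $\k$-regular measures, $I^\k(\rho)=\infty$ for $\rho\notin\P(\TkaXinf)$.

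\textbf{Step 3 (variational identity).} I would show that $h\mapsto I^\k_h(\rho_h)$ is nondecreasing, using the chain rule for relative entropy. The key decomposition is
\[
I^\k_h(\rho_h)-I^\k_{h-1}(\rho_{h-1})=\text{(a nonnegative conditional relative entropy)}.
\]
This term measures the deviation of the law of depth-$h$ offspring, given the depth-$(h-1)$ tree, from the edge-Markov kernel $\widehat P_{\rho_1}(X_v,X_{a(v)})$ of \eqref{eqn:PU}. Monotonicity immediately gives $I^\k(\rho)\ge I^\k_1(\rho_1)$.

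Conversely, the construction \eqref{muh} makes every increment vanish for $\UGW_1(\mu)$, since its depth-$h$ conditionals are exactly the products of $\widehat P_\mu$. Together with $(\UGW_1(\mu))_1=\mu$, this shows $I^\k(\UGW_1(\mu))=I^\k_1(\mu)<\infty$ when $\pi_\mu$ is symmetric. Uniqueness of the minimizer follows from the equality case of the chain rule: vanishing increments at every depth force each conditional to equal the $\widehat P_\mu$-kernel. Those conditionals determine all $\rho_h$ inductively, and hence $\rho=\UGW_1(\mu)$ by Kolmogorov consistency.

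\textbf{Main obstacle.} The hardest step is expected to be the depth-$h$ counting formula together with the exact form of the increment in Step 3. The difficulty is that the entropy correction term $-\frac{\k}{2}H(\pi^{(h-1)}\|\cdot)$ changes with $h$, so the increment is not simply a relative entropy. It must be rearranged, using unimodularity (symmetry of the pair law) and the exchangeability of leaves, into a nonnegative conditional entropy.

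I would first attempt the case $h=2$ by hand. The aim there is to verify
\[
I_2-I_1=\E\!\left[\sum_{v\sim o}H\big(\rho(\text{children of }v\mid X_v,X_o)\,\big\|\,\widehat P_{\rho_1}(X_v,X_o)\big)\right]
\]
up to the factor structure. I would then induct on $h$, treating depth-$(h-1)$ trees as enlarged marks so that the $h=2$ computation reapplies verbatim.
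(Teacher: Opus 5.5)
The paper does not prove this statement; it imports it from \cite[Theorem 4.13]{RamYas23}. Your projective-limit outline in Steps 1--2 is a sensible reconstruction, and your depth-$h$ formula contracts correctly to depth $h-1$. Step 3, however, carries all of \eqref{eqn:component-to-neighborhood}, and it rests on a misidentified increment.

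Monotonicity $I^\kappa_h(\rho_h)\ge I^\kappa_{h-1}(\rho_{h-1})$ is actually free: it follows from the contraction principle, since $L_n^{(h-1)}$ is a continuous image of $L_n^{(h)}$. What needs proof is the equality case, and there your $h=2$ target fails under every reading.
\begin{itemize}
\item Read literally, it is identically zero on admissible measures. Symmetry of $\pi^{(1)}_{\rho_2}$ forces the law of the children of $v$ given $(X_v,X_o)$ to be exactly $\widehat P_{\rho_1}(X_v,X_o)$. Verifying it would therefore give $I^\kappa_2(\rho_2)=I^\kappa_1(\rho_1)$ for every admissible $\rho_2$, which destroys uniqueness.
\item Conditioning instead on the whole star $\tau_1$ and comparing with the product of kernels gives $H(\rho_2\|(\UGW_1(\rho_1))_2)$, which overshoots.
\item Summing per-$v$ marginal entropies given $\tau_1$ misses a correlation term and carries the wrong weight.
\end{itemize}
Granting your Step 1 formula, apply the chain rule to both terms of $I^\kappa_2$ and use the symmetry. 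Write $Y_v$ for the children marks of $v$, $A$ for the marks of the root's neighbours other than $v=1$, and $\mathbb{I}$ for conditional mutual information. Then
\[
I^\kappa_2(\rho_2)-I^\kappa_1(\rho_1)=\E_{\rho_1}\Big[H\Big(\mathrm{law}(Y_1,\dots,Y_\kappa\mid\tau_1)\,\Big\|\,\textstyle\bigotimes_{v}\mathrm{law}(Y_v\mid\tau_1)\Big)\Big]+\frac{\kappa}{2}\,\E\big[\mathbb{I}(A;Y_1\mid X_o,X_1)\big].
\]
Equivalently, the increment equals $H(\rho_2\|(\UGW_1(\rho_1))_2)-\frac{\kappa}{2}\E[\mathbb{I}(A;Y_1\mid X_o,X_1)]$. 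It is nonnegative and vanishes iff $\rho_2=(\UGW_1(\rho_1))_2$. The $\frac{\kappa}{2}$ weight is the pair-law correction you anticipated, but the formula you wrote is not this one.

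For $h\ge 3$ the comparison kernel in Step 3 is also wrong. Your ``enlarged marks'' induction, done correctly, replaces $(X_o,X_v)$ by the pair of half-trees $(T(o\setminus v)_{h-2},T(v\setminus o)_{h-2})$. The same computation then shows the increment vanishes iff $\rho_h$ equals the depth-$h$ marginal of the depth-$(h-1)$ unimodular extension $\UGW_{h-1}(\rho_{h-1})$. That extension is built as in \eqref{muh}, but the new level is drawn from the kernel $\widehat P_{\rho_{h-1}}$ conditioned on those half-trees. When $\rho_{h-1}$ is not Markov, this is in general not a product of $\widehat P_{\rho_1}(X_v,X_{a(v)})$, yet the increment is zero. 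So ``deviation from $\widehat P_{\rho_1}$ at each depth'' is not $I^\kappa_h-I^\kappa_{h-1}$.

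Your uniqueness argument must therefore be an induction:
\begin{itemize}
\item Vanishing increments give $\rho_2=(\UGW_1(\mu))_2$, and then $\rho_h=(\UGW_{h-1}(\rho_{h-1}))_h$ for every $h$.
\item To close this, and also to show that the increments of $\UGW_1(\mu)$ vanish, you need a separate consistency fact: $\UGW_{h-1}\big((\UGW_1(\mu))_{h-1}\big)=\UGW_1(\mu)$. In words, for the tree built in \eqref{muh} the depth-$(h-1)$ kernel collapses to products of $\widehat P_\mu$.
\end{itemize}
With these two corrections your architecture does yield \eqref{eqn:component-to-neighborhood}.
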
 
An explicit form for the rate function $I^\k$ is given in \cite[Section 4]{RamYas23}. However,  since this exact form of $I^\k$ is not used in our analysis, 
for simplicity in the last theorem we only stated the properties of $I^\k$  relevant to our analysis.

\subsection{Gibbs conditioning principle}
\label{subs-Gibbscond}

In this section, we state a Gibbs conditioning principle associated with the sequence $\{U_n\}$.  
Let 
 $\A_1(c)$ be the subset of $\Conset(h,c)$ for which $\pi_{\mu} \text{ is symmetric}$, or more explicitly, define   
 \begin{align}
     \A_1(c) \coloneqq
    \begin{cases}
       \{ \mu\in \P\cpr{\TkappaX{1}}:\E_{\mu} [S_h] \geq \cc \text{ and } \pi_{\mu} \text{ is symmetric}\}  \quad \text{ if } \cc > \sref,\\
       \{ \mu\in \P\cpr{\TkappaX{1}}:\E_{\mu} [S_h] \leq \cc \text{ and } \pi_{\mu} \text{ is symmetric}\} \quad \text{ if } \cc < \sref. 
    \end{cases}
    \label{def:A1}
\end{align}
Also, define the associated set  
\begin{align}
\label{def:A}
    \A(c) & \coloneqq \pr{\rho\in \P(\TkaXinf): \rho_1\in \A_1(\cc)}.
\end{align}
We now state the main result of this section.

\begin{proposition}[Gibbs conditioning principle] \label{prop:gibbs cond}
    Let $c\in \R$ and let $\optset(c)  \subset\P(\TkaXinf)$ be the set of minimizers of the constrained optimization problem 
    \begin{align}
 \RRR (c) :=   \min_{\rho \in {\mathcal A} (c)} I^\k (\rho), 
   \label{eqn:Iinf-optimization}
   \end{align}
   where $I^\k$ is the large deviation rate function of $\{U_n\}$ from Theorem \ref{thm:ldp-Un}. 
Then for any open set $\openset \subset\P(\TkaXinf)$ with $\optset (c) \subset \openset$, the conditional limits \eqref{eq:general cond limit}-\eqref{eq:general cond limit other way} hold. In other words, if $c > \sref$ (resp. $c < \sref$)  
then $\{U_n\}$ asymptotically lies in $\optset(c)$ given $\{\mathbb{E}_{L_n}\sqpr{S_h}\geq c-\delta\}$ (resp. $\{\mathbb{E}_{L_n}\sqpr{S_h}\leq c-\delta\}$).
\end{proposition}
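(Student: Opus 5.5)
The plan is the standard route to a Gibbs conditioning principle: combine the LDP for $\{U_n\}$ from Theorem \ref{thm:ldp-Un} with its contraction to $\{L_n\}$, and use the fact that the conditioning event is expressed through $L_n$ only. I treat the case $c>\sref$ in detail; the case $c<\sref$ follows by replacing $h$ with $-h$ and $c$ with $-c$.

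First, some topology. Since $\TkappaX{1}$ is finite, the map $\mu\mapsto \E_\mu[S_h]$ is continuous on $\P(\TkappaX{1})$. The projection $\rho\mapsto\rho_1$ is continuous from $\P(\Gstar[\X])$ to $\P(\calG_{*,1}[\X])$, because depth-$1$ truncation is continuous in the local topology. Hence the sets $F_\delta:=\{\rho:\E_{\rho_1}[S_h]\ge c-\delta\}$ are closed in $\P(\Gstar[\X])$. Since $I^\k=\infty$ off $\P(\TkaXinf)$ and $I^\k_1=\infty$ unless $\pi_\mu$ is symmetric, we get $\inf_{F_\delta} I^\k=\inf_{\A(c-\delta)}I^\k$, where I write $\A(c-\delta)$ for the set defined with threshold $c-\delta$.

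Next, for an open $\openset\supset\optset(c)$, write
\[
\frac1n\log\Prob(U_n\notin\openset\mid \E_{L_n}[S_h]\ge c-\delta)=\frac1n\log\Prob(U_n\in F_\delta\setminus\openset)-\frac1n\log\Prob(\E_{L_n}[S_h]\ge c-\delta).
\]
Since $F_\delta\setminus\openset$ is closed, the LDP upper bound gives that the limsup of the first term is at most $-\inf_{F_\delta\setminus\openset}I^\k$. For the second term I use the LDP lower bound for $L_n$ (Theorem \ref{thm:ldp-ln}) on the open set $\{\E_\mu[S_h]>c-\delta\}$. Combined with \eqref{eqn:component-to-neighborhood}, this gives a liminf of at least $-\inf\{I^\k(\rho):\E_{\rho_1}[S_h]>c-\delta\}$, which is at least $-\RRR(c)$ because $\A(c)$ is contained in that set. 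It therefore suffices to show
\[
\liminf_{\delta\downarrow0}\inf_{F_\delta\setminus\openset}I^\k>\RRR(c).
\]
I argue by contradiction. Suppose there are $\rho^\delta\in F_\delta\setminus\openset$ with $I^\k(\rho^\delta)\le \RRR(c)+o(1)$. Compactness of level sets lets me extract a limit $\rho$. This limit lies in $F_0\setminus\openset$ because $F_0\setminus\openset$ is closed, and lower semicontinuity gives $I^\k(\rho)\le\RRR(c)$. So $\rho$ is a minimizer lying in $\A(c)$, hence $\rho\in\optset(c)\subset\openset$, which is a contradiction. I also need $\RRR(c)<\infty$ and the existence of minimizers, which again follow from the goodness of $I^\k$ and the closedness of $\A(c)$. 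Finiteness holds whenever $\A_1(c)$ contains some $\mu$, for instance an appropriate $\mu^{(\pi)}$ with $\pi\in\BB_h(c)$, using $I^\k_1<\infty$ on symmetric $\pi_\mu$.

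The main obstacle is matching the open lower bound with the closed upper bound, that is, showing that the infimum over the strict inequality set is not larger than over the closed one. This is exactly why the limit $\delta\downarrow0$ is placed outside. In the contradiction argument I only need the lower-bound term at level $c-\delta$ to be at least $-\RRR(c)$, which holds trivially, so no continuity of $c\mapsto\RRR(c)$ is required.
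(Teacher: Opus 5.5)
Your argument is correct, but it takes a different route from the paper. The paper does not prove the conditioning principle directly. It verifies the hypotheses of the abstract result \cite[Theorem 7.1]{Leonard2010EntropicProjections}: finiteness of the constrained infimum, closedness of $\mathcal{A}(c)$ and of the enlarged sets $\mathcal{A}_\delta$, the identity $\mathcal{A}(c)=\bigcap_{\delta>0}\mathcal{A}_\delta$, the inclusion of $\mathcal{A}(c)$ in the interior of each $\mathcal{A}_\delta$, and $\mathbb{P}(U_n\in\mathcal{A}_\delta)>0$ for large $n$. It then invokes that theorem.

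You instead reprove that theorem in this concrete setting. You decompose the conditional probability as a ratio, apply the LDP upper bound to the numerator on the closed set $F_\delta\setminus\mathcal{O}$ and the LDP lower bound to the denominator, and use compactness plus lower semicontinuity to get $\liminf_{\delta\downarrow 0}\inf_{F_\delta\setminus\mathcal{O}}I^\kappa>R(c)$. The ingredients are the same: your inclusion $\mathcal{A}(c)\subset\{\mathbb{E}_{\rho_1}[S_h]>c-\delta\}$ is the paper's interior condition, and your limit-point step is $\mathcal{A}(c)=\bigcap_\delta\mathcal{A}_\delta$. Your version is self-contained and has one concrete advantage. Eventual positivity of the conditioning probability follows from the LDP lower bound once $R(c)<\infty$, so you do not need the realizability lemma \cite[Lemma 3.5]{BalPerRei26}. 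The paper uses that lemma to produce finite marked $\kappa$-regular graphs with prescribed neighborhood statistics. The paper's route is shorter and delegates the $\delta$-bookkeeping to a published general statement.

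A few points need tidying.
\begin{itemize}
\item The limit of $\rho^{\delta}$ lies in $F_0$ not because $F_0\setminus\mathcal{O}$ is closed; the $\rho^\delta$ need not lie in $F_0$. The correct reason is that the closed sets $F_\delta$ decrease to $F_0$, combined with closedness of $\mathcal{O}^c$.
\item On all of $\mathcal{P}(\mathcal{G}_*[\mathcal{X}])$ the depth-one sum $\sum_{v\sim o}h(X_o,X_v)$ is unbounded, since root degrees are unbounded. So $\rho\mapsto\mathbb{E}_{\rho_1}[S_h]$ is not weakly continuous there, and $F_\delta$ need not be closed. Run the argument on probability measures on rooted graphs of degree at most $\kappa$. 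That space is compact and carries every $U_n$, so the LDP restricts to it and $I^\kappa$ is automatically good. The paper glosses over the same point.
\item To place the limit point in $\mathcal{A}(c)$ you need $I^\kappa(\rho)=\infty$ whenever $\pi_{\rho_1}$ is not symmetric. This follows from the contraction principle applied to $L_n=(U_n)_1$ together with uniqueness of the rate function; say so explicitly.
\end{itemize}
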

\begin{proof}
      Our setting satisfies the assumptions required for the usual Gibbs conditioning principle for large deviations; see, for example, \cite[Theorem 7.1]{Leonard2010EntropicProjections} and \cite[Proposition 6]{KimRam18}. Indeed, the following holds:
\begin{enumerate}
    \item By Theorem \ref{thm:ldp-Un}, the component empirical measure $\{U_n\}$ satisfies the LDP on $\P(\Gstar[\X])$ with a rate function $I^\k$ whose domain lies in  $\P(\TkaXinf).$
    \item The mapping $\P(\TkaXinf) \ni \rho\mapsto \mathbb{E}_{\rho_1}\sqpr{S_h}\in \R$ is linear and continuous. This is immediate from \eqref{eqn;avg-h-local}.

    \item The constraint set $\mathcal{A}=\mathcal{A}(\cc)$ satisfies 
    \begin{enumerate}
        \item $\inf_{\rho\in \A}I^\k(\rho)=\inf_{\mu\in \A_1}I^\k_1(\mu)<\infty$ by Theorem \ref{thm:ldp-Un};
        \item $\A$ is closed: if $\rho^{(k)} \to \rho$ then $\rho^{(k)}_1\rightarrow \rho_1$, thus the bounded convergence theorem implies  $\Exp_{\rho^{(k)}_1}[S_h]=\Exp_{\rho^{(k)}_1}[S_h] \to \Exp_{\rho}[S_h]$. Consequently, if $\rho^{(k)} \in \A$ for all $k,$ then $\rho \in \A.$
        \item Let $\delta>0$ and define $\A_\delta\coloneqq\{\rho\in \P(\TkaXinf)\colon \mathbb{E}_{\rho_1}[S_h]\geq \cc-\delta\text{ and }\pi_{\rho_1}\text{ is symmetric}\}.$ Then $\A_\delta$ is closed in $\P\cpr{\TkaXinf}$ and one has:\\
        (i) $\A=\cap_{\delta>0} \A_\delta. $ Indeed, if $\rho \in \A$ then $\pi_{\rho_1}$ is symmetric and $\Exp_{\rho_1}[S_h] \geq c$, which implies $\Exp_{\rho_1}[S_h] \geq c - \delta$ for all $\delta >0.$ Consequently $\rho \in \A_\delta$ for all $\delta > 0$ and hence $\rho \in \cap_{\delta > 0} \A_\delta.$ Conversely, if $\rho \in \A_\delta$ for all $\delta > 0$ then $\Exp_{\rho_1}[S_h] \geq c - \delta$ for all $\delta >  0.$ Letting $\delta \to 0$ we obtain $\Exp_{\rho_1}[S_h] \geq c$ and it follows that $\rho \in \A.$

        (ii) For any $\delta > 0$ we have $\mathbb{P}(U_n\in \A_\delta)=\mathbb{P}\cpr{\bbE_{L_n}\sqpr{S_h}\geq \cc-\delta}>0$ for large enough $n$. Indeed, pick any $\rho \in \A_\delta$, then since $\pi_{\rho_1}$ is symmetric, by \cite[Lemma 3.5]{BalPerRei26} with depth $k=2$ and the fact that the marginalization map $\rho_2\mapsto \rho_1$ is continuous, there exists a sequence of marked $\k$-regular graphs $ \{(F_n, x^n)\}$  such that their corresponding neighborhood empirical measures $\ell_n = \frac{1}{n} \sum_{v \in F_n}\delta_{\cl_v(F_n, x^n)}$ converges to $\rho_1$ in $\P(\TkappaX{1})$. 
        Therefore, for any fixed $n\in \N$ that is large enough, we have $\ell_n\in \A_\delta$ and $\mathbb{P}\cpr{\bbE_{L_n}\sqpr{S_h}\geq \cc-\delta}\geq \bbP\cpr{L_n=\ell_n}>0$.

        \item $\A$ is contained in the interior of $\A_\delta$ for all $\delta>0.$ Indeed, if $\rho \in \A$ and $\rho_k \to \rho$ then the  bounded convergence theorem yields $\Exp_{\rho^{(k)}_1}[S_h] \to \Exp_{\rho_1}[S_h],$ and in particular, for any $\delta > 0$ there exists $K_1 < \infty$ such that for all  $k \geq K_1$, $\Exp_{\rho^{(k)}_1} \geq \Exp_{\rho_1}[S_h] - \delta \geq c - \delta.$ This implies $\rho$ lies in the interior of $\A_\delta$ for any $\delta > 0.$
    \end{enumerate}
\end{enumerate}
An application of \cite[Theorem 7.1]{Leonard2010EntropicProjections} then implies that \eqref{eq:general cond limit} holds.   Carrying out the same argument but replacing $h$ and $\cc$ with $-h$ and $-\cc$, respectively, we similarly obtain  \eqref{eq:general cond limit other way}.
\end{proof}

\subsection{Simplification of the optimization problem}
\label{sec:simplification}

We now simplify the optimization problem for $\RRR_1 (c)$ given in \eqref{eqn:Iinf-optimization}. For simplicity, we treat only the case $c>\sref$; the case $c<\sref$ follows analogously.

\subsubsection{Reduction to an optimization over edge marginals}

First, note that the set $\A_1(c)$ defined in 
\eqref{def:A1} can be rewritten in terms of the set  $\BB_h(c)$ defined in 
\eqref{eqn:Medge} as follows: 
\begin{equation}
    \label{eq:A1alt}
\A_1(c) = \pr{ \mu \in \P(\TkaXinf): \pi_\mu \in \bigcup_{c' \geq c} \BB_{h}(\cc')}. 
\end{equation}
Also, using the relations in \eqref{eqn:component-to-neighborhood}, note that \eqref{eqn:Iinf-optimization} reduces to 
\begin{align}
    R(c) = \min_{\mu \in \A_1(c)} I^\k_1(\mu). \label{eqn:I1_optimization}
\end{align}
Next, use \eqref{eq:A1alt}, substitute the definition of $I_1^{\k}$  from \eqref{def-Ikk} into \eqref{eqn:I1_optimization} and the fact that $\pi_{\eta_1} = \nu \otimes \nu$ by Remark \ref{remark:true-law} to obtain 
\begin{align}
\label{opt:red1}
\RRR (\cc)  &= 
\min_{\cc' \geq \cc} \min_{\pi \in \BB_{h}(\cc')}  \min_{\mu \in \P(\TkaXinf): \pi_\mu = \pi}  I^\k_1 (\mu) \\
\label{opt:red2}
 &= \min_{\cc' \geq \cc} \min_{\pi \in {\BB}_{h} (\cc')} 
\left[ \min_{\mu \in  \P(\TkaXinf): \pi_\mu = \pi} H(\mu \|\eta_1) - \frac{\kappa}{2} H(\pi||\nu \otimes \nu) \right]. 
\end{align} 

Note that the minimization problem within  brackets in \eqref{opt:red2} is convex  with linear constraints.  The  next lemma identifies its explicit   solution. We postpone its proof to Appendix  \ref{section:kappa reg}.

\begin{lemma}
    \label{lemma:1MRF}
For any $\pi \in \Psym{\X}$, 
the measure $\mup$  defined in \eqref{eqn:1MRF} is the unique minimizer in 
the set 
$\{ \mu \in \P(\TkaXinf): \pi_\mu = \pi\}$ 
of the functional $H(\cdot\| \eta_1)$ and hence, of 
 $I^{\k}_1 (\cdot)$.  Furthermore, we have   
 \begin{equation}
 \label{eqn:J=I-mu-pi}
 \min_{\mu \in \P(\TkaXinf): \pi_\mu = \pi}  I^\k_1 (\mu) =  I^\k_1 (\mup) =  J^\nu_{\k} (\pi), \quad \forall \pi \in \Psym{\X}, \end{equation}
  where $J^\nu_{\k}$ is as defined in \eqref{eqn:J-nu-k}. 
\end{lemma}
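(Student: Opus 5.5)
The plan is to prove the statement by the chain rule for relative entropy, decomposing along the root mark and then the leaf marks. Fix $\pi\in\Psym{\X}$ and let $\mu$ be any measure on marked $\k$-stars with $\pi_\mu=\pi$. View $\mu$ as a law on labeled stars as in Remark~\ref{rmk: labeling scheme}, so that the leaf marks $(X_1,\dots,X_\k)$ are exchangeable. Under $\eta_1$ the marks are i.i.d.\ $\nu$. Decomposing with respect to $X_o$ gives
\[
H(\mu\|\eta_1)=H(\piz\|\nu)+\sum_{x}\piz(x)\,H\bigl(\mu(X_1,\dots,X_\k\in\cdot\mid X_o=x)\,\big\|\,\nu^{\otimes\k}\bigr).
\]
Here I use that the root marginal of $\mu$ is $\piz$, which holds because $\pi_\mu=\pi$ has first marginal $\piz$.

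The key step is the standard subadditivity inequality for a product reference measure. For any law $Q$ on $\X^\k$ with one-dimensional marginals $Q_v$,
\[
H(Q\|\nu^{\otimes\k})=H\bigl(Q\,\|\,\textstyle\bigotimes_v Q_v\bigr)+\sum_v H(Q_v\|\nu)\ \ge\ \sum_v H(Q_v\|\nu),
\]
with equality if and only if $Q=\bigotimes_v Q_v$. By exchangeability, every leaf marginal conditional on $X_o=x$ equals $\pic(\cdot\mid x)$. Hence
\[
H(\mu\|\eta_1)\ge H(\piz\|\nu)+\k\sum_x\piz(x)H(\pic(\cdot\mid x)\|\nu),
\]
with equality if and only if, given the root mark, the leaf marks are i.i.d.\ with law $\pic(\cdot\mid x)$. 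That is exactly the measure $\mup$ of \eqref{eqn:1MRF}. One checks directly from \eqref{eqn:1MRF} that $\mup$ is a probability measure whose edge marginal is $\pi$, so the lower bound is attained. Uniqueness follows from the equality case, and it also follows from strict convexity of $H(\cdot\|\eta_1)$ on the convex set $\{\pi_\mu=\pi\}$. Since $I^\k_1(\mu)=H(\mu\|\eta_1)-\frac{\k}{2}H(\pi\|\nu\otimes\nu)$ and the second term depends only on $\pi$, the same $\mup$ is the unique minimizer of $I^\k_1$ over this set.

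It remains to identify the value with $J^\nu_\k(\pi)$, and this algebraic step is the only real computation. Write $\sum_x\piz(x)H(\pic(\cdot\mid x)\|\nu)=H(\pi\|\piz\otimes\nu)-0$, which is the chain rule applied to $\pi$ against $\piz\otimes\nu$. Then decompose
\[
H(\pi\|\piz\otimes\nu)=H(\pi\|\piz\otimes\piz)+H(\piz\|\nu),
\]
using that the second marginal of $\pi$ is $\piz$ by symmetry. Similarly,
\[
H(\pi\|\nu\otimes\nu)=H(\pi\|\piz\otimes\piz)+2H(\piz\|\nu).
\]
Substituting these gives
\begin{align*}
I^\k_1(\mup)&=H(\piz\|\nu)+\k H(\pi\|\piz\otimes\piz)+\k H(\piz\|\nu)-\tfrac{\k}{2}H(\pi\|\piz\otimes\piz)-\k H(\piz\|\nu)\\
&=H(\piz\|\nu)+\tfrac{\k}{2}H(\pi\|\piz\otimes\piz)=J^\nu_\k(\pi).
\end{align*}
All quantities are finite because $\X$ is finite and $\nu>0$. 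The only care needed is in handling absolute continuity when $\piz$ lacks full support: there the conditional laws are defined only for $x$ with $\piz(x)>0$, and the identities above remain valid restricted to those $x$.
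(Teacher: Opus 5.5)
Your proof is correct and is essentially the paper's argument. The paper first checks $\mu\ll\mu^{(\pi)}$ and then writes the identity $H(\mu\|\eta_1)=H(\mu\|\mu^{(\pi)})+H(\mu^{(\pi)}\|\eta_1)$ directly, using that $\log(d\mu^{(\pi)}/d\eta_1)$ is a sum of root and root--leaf terms whose $\mu$-expectation depends only on $\pi_\mu=\pi$; your root-conditioned slack $\sum_x\pi^o(x)\,H\bigl(\mu(\cdot\mid X_o=x)\,\|\,\pi^{1|o}(\cdot\mid x)^{\otimes\kappa}\bigr)$ is exactly $H(\mu\|\mu^{(\pi)})$, and your chain-rule packaging gets the absolute continuity for free and gives the value $H(\pi^o\|\nu)+\kappa H(\pi\|\pi^o\otimes\nu)$ in closed form, which makes the identification with $J^\nu_\kappa(\pi)$ slightly cleaner than the paper's direct expansion.
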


\subsubsection{A further reduction} 
\label{subs:reduction}

Combining  \eqref{opt:red1} with Lemma \ref{lemma:1MRF},  it immediately follows that 
\begin{equation}
    \RRR (\cc) =  \min_{\cc' \geq \cc} \min_{\pi\in \BB_{h} (\cc')} J_\k^\nu(\pi).\label{eqn:J_opti_c>=c*}
\end{equation} 
We now provide a further reduction to the optimization problem in \eqref{eqn:J_opti_c>=c*}, by showing that the minimal value on the right-hand side is necessarily attained at $c' = c.$ This is  intuitive as it simply says that  the cheapest way to achieve the desired atypicality is by using the least deviation possible.  Nonetheless, the proof is rather technical and indirect, so we  postpone its proof to Appendix \ref{sec:c=c'}.

\begin{lemma}\label{lemma:optimizing_c=c'}
    For $\cc > \sref$, $\qquad$
    \begin{align}
        \min_{\cc' \geq \cc} \min_{\pi \in \BB_{h}(\cc')} J_\k^\nu(\pi)= \min_{\pi \in \BB_{h}(\cc)} J_\k^\nu(\pi). \label{eqn:=>1}
    \end{align}
    Likewise, for $\cc < \sref$, 
    \begin{align}
    \min_{\cc' \leq \cc} \min_{\pi \in \BB_{h}(\cc')} J_\k^\nu(\pi)= \min_{\pi \in \BB_{h}(\cc)} J_\k^\nu(\pi); \label{eqn:=>2}
    \end{align}
\end{lemma}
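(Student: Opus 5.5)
Fix $\cc>\sref$; the case $\cc<\sref$ follows by replacing $h$ with $-h$ and $\cc$ with $-\cc$. It suffices to show that every $\pi\in\BB_h(\cc')$ with $\cc'>\cc$ can be matched by some $\tilde\pi\in\BB_h(\cc)$ satisfying $J^\nu_\k(\tilde\pi)\le J^\nu_\k(\pi)$. Once we have this, the left-hand side of \eqref{eqn:=>1} is at least the right-hand side. The reverse inequality is trivial, since $\cc'=\cc$ is one of the admissible choices. Attainment of the minima is automatic: $\BB_h(\cc)$ is a compact subset of the simplex and $J^\nu_\k$ is continuous, because all marginals are absolutely continuous with respect to $\nu$, which has full support.

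The plan is to interpolate linearly toward the reference measure. Given $\pi\in\BB_h(\cc')$ with $\cc'>\cc$, set
\begin{equation*}
\pi_t:=(1-t)\,\nu\otimes\nu+t\,\pi,\qquad t\in[0,1].
\end{equation*}
Each $\pi_t$ is symmetric. The map $t\mapsto\k\E_{\pi_t}[h]=(1-t)\sref+t\cc'$ is affine and continuous, and $\sref<\cc<\cc'$, so there is a $t^*\in(0,1)$ with $\pi_{t^*}\in\BB_h(\cc)$. It then remains to show $J^\nu_\k(\pi_{t^*})\le J^\nu_\k(\pi)$. The natural route is to show that $g(t):=J^\nu_\k(\pi_t)$ is nondecreasing on $[0,1]$. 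Since $g(0)=0$ by \eqref{eqn:unique-minimizer-of-J}, it would suffice for $g$ to be convex, because a convex function with its minimum at $0$ is nondecreasing on $[0,1]$.

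The main obstacle is that $J^\nu_\k$ is not convex. Indeed, for $\k\ge3$ the term $\frac{\k}{2}H(\pi\|\pi^o\otimes\pi^o)$ is a mutual information, which is not jointly convex in $\pi$. Rewriting, $J^\nu_\k(\pi)=\frac{\k}{2}H(\pi\|\nu\otimes\nu)-(\k-1)H(\pi^o\|\nu)$, which is a difference of convex functions, so convexity along the segment is not automatic. I therefore expect the argument to be indirect rather than a direct convexity check.

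\textbf{Primary route: the LDP for the edge empirical measure.} I would first try to use the large deviation principle itself. Let $\pi_n$ denote the (symmetric) edge empirical measure. By Theorem \ref{thm:ldp-ln}, the contraction principle and Lemma \ref{lemma:1MRF}, the sequence $\{\pi_n\}$ satisfies an LDP with rate function $J^\nu_\k$. Rate functions of this type satisfy
\begin{equation*}
\inf_{\BB_h(\cc)}J^\nu_\k=-\lim_{n\to\infty}\frac1n\log\bbP(\text{consensus}\approx \cc).
\end{equation*}
Monotonicity in $\cc'$ of the rate for the contracted consensus variable would then follow if that contracted rate function were convex, that is, if the scalar functional $\cc\mapsto\inf_{\BB_h(\cc)}J^\nu_\k$ were convex. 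Proving that convexity is itself nontrivial.

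\textbf{Fallback: a direct path argument.} If the LDP route stalls, I would work directly along the segment. Compute
\begin{equation*}
g'(t)=\Big\langle \log\frac{d\pi_t}{d(\nu\otimes\nu)},\,\frac{\k}{2}(\pi-\nu\otimes\nu)\Big\rangle-(\k-1)\Big\langle\log\frac{d\pi_t^o}{d\nu},\,\pi^o-\nu\Big\rangle.
\end{equation*}
The goal is to show $g'\ge0$ whenever $g(t)\le g(1)$. If that also fails, I would instead pick a minimizer $\pi$ over $\bigcup_{\cc'\ge\cc}\BB_h(\cc')$ and argue by contradiction. If its constraint value satisfies $\cc'>\cc$, then $\pi$ is a local minimizer of $J^\nu_\k$ in the relative interior of the constraint region, so the first-order condition forces $\nabla J^\nu_\k(\pi)$ to vanish in all symmetric directions. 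Such critical points correspond to TIS Gibbs measures with $\beta=0$, namely fixed points of the cavity map with $\psi\equiv1$. This forces $\pi=\nu\otimes\nu$, which has consensus $\sref<\cc$, a contradiction. Boundary points, where $\pi$ is not of full support, would need separate handling. I expect that handling boundary points to be the technical heart of the argument.
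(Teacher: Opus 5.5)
\textbf{The strategy you lead with cannot work.} The problem is not only that convexity of $g$ fails. The target inequality $J^\nu_\k(\pi_{t^*})\le J^\nu_\k(\pi)$ is itself false in general, even when $\pi$ minimizes $J^\nu_\k$ over $\BB_h(\cc')$. Take $\X=\{1,-1\}$, $\nu=\mathsf{Unif}(\X)$, $h(x,y)=xy$ (so $\sref=0$), $\cc'=\k$, and $\pi=\delta_{(1,1)}$. By Section \ref{subs:Isingextreme}, $\pi$ is a minimizer over $\BB_h(\k)$, with $J^\nu_\k(\pi)=\log 2$. Applying Lemma \ref{lemma:eloge-derivative} with $\xi=\nu\otimes\nu$ gives
\[
\lim_{\varepsilon\to0^+}\frac{g(1-\varepsilon)-g(1)}{|\varepsilon\log\varepsilon|}=(\k-1)\cdot\frac12-\frac{\k}{2}\cdot\frac34=\frac{\k-4}{8}.
\]
This is positive for $\k>4$. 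Numerically, for $\k=10$ one gets $g(0.99)\approx 0.714>\log 2=g(1)$. Since $\pi_{1-\varepsilon}\in\BB_h((1-\varepsilon)\k)$, for $\cc$ close to $\k$ the segment point is strictly worse than $\pi$. The lemma holds here only because the minimizer over $\BB_h(\cc)$ lies off the segment, so no computation along that segment can succeed.

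Your LDP route only restates the problem. The contracted rate function is exactly $\phi(\cc)=\min_{\BB_h(\cc)}J^\nu_\k$, and its monotonicity on $(\sref,\infty)$ is the lemma itself. Convexity is strictly stronger, and nothing in the LDP supplies it: the consensus is not an i.i.d.\ average, and $J^\nu_\k$ is not convex.

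\textbf{The argument that does work is your last fallback, and it is exactly the paper's proof.} Let $\tilde\pi$ minimize $J^\nu_\k$ over $\{\pi\in\Psym{\X}:\k\E_\pi[h]\ge\cc\}$ with $\k\E_{\tilde\pi}[h]=\cc'>\cc$. Then $\tilde\pi$ is an unconstrained local minimizer on $\Psym{\X}$. By \cite[Proposition 1.7]{DemMonSlySun14}, an interior critical point has the form $\ell\otimes\ell$ with $\ell$ a fixed point of the cavity map for $\psi\equiv 1$. That map is the constant map $\ell\mapsto\nu$, so $\tilde\pi=\nu\otimes\nu$, whose consensus is $\sref<\cc$, a contradiction.

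The boundary case you defer is not the technical heart; it takes two lines from Lemma \ref{lemma:eloge-derivative}. Because $\cc'>\cc$, the point $\pi_\varepsilon=(1-\varepsilon)\tilde\pi+\varepsilon\xi$ is feasible for every $\xi\in\Psym{\X}$ and all small $\varepsilon$. Hence the limit in that lemma must be nonnegative for every $\xi$.
\begin{itemize}
\item If $\tilde\pi^o(x_0)=0$ for some $x_0$, pick $z$ with $\tilde\pi^o(z)>0$ and set $\xi=\frac12(\delta_{(x_0,z)}+\delta_{(z,x_0)})$. The limit is $\frac{\k-1}{2}-\frac{\k}{2}=-\frac12$.
\item If $\tilde\pi^o$ has full support but $\tilde\pi$ does not, $\xi=\nu\otimes\nu$ gives $-\frac{\k}{2}\sum_{x,z}\nu(x)\nu(z)\indf_{\{\tilde\pi(x,z)=0\}}<0$, as in Lemma \ref{lemma:pos_marginal_cannot_be_locmin}.
\end{itemize}
Either way $\tilde\pi$ is not a local minimizer, so $\tilde\pi\in\Psympos{\X}$ and the interior argument applies. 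The paper itself leaves this step implicit, writing ``local minimum (and thus stationary point)''. It is worth spelling out, and with it the fallback becomes a complete proof. Make it your main argument and drop the segment and convexity routes.
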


\subsubsection{Proof of Proposition \ref{prop:optsimp}} \label{subs:pf-of-prop-optsimp}
 
The result follows from the Gibbs conditioning principle (Proposition \ref{prop:gibbs cond}), together with the identity \eqref{opt:red2}, Lemma \ref{lemma:1MRF} and Lemma \ref{lemma:optimizing_c=c'}. 
\qed

\section{Characterization of minimizers}\label{sec:characterization_of_the_optimizers}

In view of Proposition \ref{prop:optsimp}, in order to identify conditional limits, it suffices to analyze the set of minimizing  measures ${\mathcal M}_{edge} (\cc)$ of the non-convex  optimization problem $\RRR_{edge}(\cc)$ specified in \eqref{opt:pi}.  In this section, we  characterize the larger set of  {\em local minimizers,}  including those that lie in the interior of the simplex ${\mathcal P}(\X^2)$ or equivalently, 
have  full support on $\X^2,$ as well as those that lie on the boundary of the simplex.  In Section \ref{subs:opt1}, we connect interior local minimizers  with non-degenerate $\TIS$ Gibbs measures using {\em the cavity equation} associated with a factor model on infinite regular trees. In Section \ref{subs:existence-of-boundary-minimizers}, we provide  two examples to show  that local minimizers  could  lie on the boundary and could even be the unique global minimizer, in which case  they   correspond to $\TIS$ Gibbs measures that is degenerate (namely, have a vertex marginal that is supported on a strict subset $\X^\downarrow\subsetneq\X$).  This  explains the need to include   degenerate $\TIS$ Gibbs measures in the set of potential conditional limits $\Gibbsdeg$ in Theorem \ref{thm:Gibbs-deg-and-nondeg}. In Section \ref{subs:boundary-local-minimizer-to-degenerate-TISGM}, we show that every boundary local minimizer  corresponds to some degenerate $\TIS$ Gibbs measure. We combine these results in Section \ref{subs:pf-of-Gibbs-deg-nondeg} to prove Theorem \ref{thm:Gibbs-deg-and-nondeg}.

We begin with clear definitions of some terminologies. Recall that the objective function $J_\k^\nu$ and constraint set $\BB_h(\cc)$ of the optimization problem $\RRR_{edge}(\cc)$  are as defined in  
\eqref{eqn:J-nu-k} and \eqref{eqn:Medge}, respectively.

\begin{definition}[Local minimizers of the edge measure optimization problem]\label{def:local-minimizer}
    An element $\pi_*\in \BB_h(\cc)$ is said to be a {\em local minimizer} of the edge optimization problem $\RRR_{edge}(\cc)$ if there exists a relatively open set $\mathcal{U}$ in $ \BB_h(\cc)$ containing $\pi_*$ such that $J^\nu_\k(\pi_*)\leq J^\nu_\k(\pi)$ for any $\pi\in \mathcal{U}.$
\end{definition}

\begin{definition}
    Given any countable discrete set $\mathcal{Z}$ and $\mu\in \P(\mathcal{Z})$, we say that $\mu$ lies {\em in the interior} (of $\P(\mathcal{Z})$) if $\mu(z)>0$ for all $z\in \mathcal{Z}$. Otherwise, we say that $\mu$ lies {\em on the boundary} (of $\P(\mathcal{Z})$). The collection of all probability measures in the interior and the collection of all probability measures on the boundary are denoted, respectively,  by
\begin{align}
    \Ppos(\mathcal{Z})&\coloneqq \pr{\mu\in \P(\mathcal{Z}):\mu(z)>0 \, \forall z\in \mathcal{Z}}\label{eqn:Ppos};\\
    \Pzero(\mathcal{Z})&\coloneqq \pr{\mu\in \P(\mathcal{Z}):\exists z_*\in \mathcal{Z}\text{ s.t. }\mu(z_*)=0}\label{eqn:Pzero}.
\end{align}
We also define analogous subsets of symmetric measures as follows:
\begin{align}
        \Psympos{\X}&\coloneqq \pr{\pi\in \Psym{\X}: \pi(x,z)>0 \, \forall x,z\in \X}; \label{eqn:Psympos}\\
        \Psymzero{\X}&\coloneqq \pr{\pi\in \Psym{\X}: \exists \, x,z\in \X \text{ such that } \pi(x,z)=0}.\label{eqn:Psymzero} 
\end{align}
\end{definition}

We refer to (local) minimizers that lie in the interior/boundary of $\P(\X^2)$ as {\em interior/boundary minimizers}.

\subsection{From interior local minimizers to non-degenerate $\TIS$ Gibbs measures.}
\label{subs:opt1}

\begin{proposition}[Interior local minimizers are marginals of $\TIS$ Gibbs measures] 
\label{prop:int_local_min_to_Gibbs}
    Fix $\nu\in \P(\X)$. Let $h$ be any edge potential and fix $c\in (\k h_{\min},\k h_{\max})$. Then for every  interior local minimizer $\pi_*$ of the edge optimization problem \eqref{opt:pi}, there exist $\beta\in \R$ and $\pmb{\rho}\in\TISGM_{T^\k}^{\exp{\pr{\beta h}},\nu}$ such that $\pi_{\pmb\rho_1}=\pi_*$. 
\end{proposition}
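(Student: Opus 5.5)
Since $\pi_*$ has full support, it lies in the relative interior of $\Psym{\X}$. The constraint set $\BB_h(\cc)$ is then, near $\pi_*$, the intersection of an open subset of the affine space $\{\pi:\pi(x,z)=\pi(z,x),\ \sum\pi=1,\ \k\E_\pi[h]=\cc\}$. On this open set $J^\nu_\k$ is smooth, so the first-order Lagrange conditions hold at $\pi_*$. To write them, we parametrize symmetric $\pi$ by its values on unordered pairs, or equivalently differentiate along symmetric perturbations. Writing $\pi^o$ for the marginal,
\[
J^\nu_\k(\pi)=\sum_x\pi^o(x)\log\frac{\pi^o(x)}{\nu(x)}+\frac{\k}{2}\sum_{x,z}\pi(x,z)\log\frac{\pi(x,z)}{\pi^o(x)\pi^o(z)},
\]
and using $\partial \pi^o(x)/\partial\pi(x,z)$ together with symmetry, stationarity gives, for all $x,z$,
\[
\frac{\k}{2}\log\pi_*(x,z)-\frac{\k-1}{2}\bigl(\log\pi_*^o(x)+\log\pi_*^o(z)\bigr)-\frac12\bigl(\log\nu(x)+\log\nu(z)\bigr)=\lambda_0+\lambda_1 h(x,z).
\]
The degenerate case where the constraint gradient $h$ is parallel to the gradient of the normalization is excluded: if $h$ were constant, $\BB_h(c)$ would be empty for $c\in(\k h_{\min},\k h_{\max})$. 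So Lagrange multipliers exist, and this is the only place the hypothesis on $c$ is used.

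Exponentiating, and setting $\beta:=2\lambda_1/\k$, we get
\[
\pi_*(x,z)=C\,e^{\beta h(x,z)}\,g(x)g(z),\qquad g(x):=\pi_*^o(x)^{(\k-1)/\k}\nu(x)^{1/\k}.
\]
Define $\ell(x)\propto g(x)$. To verify that $\ell$ is a fixed point of $\BP$ with $\psi=e^{\beta h}$ and $\bar\psi=\nu$, sum the display over $z$:
\[
\pi_*^o(x)=C g(x)\sum_z e^{\beta h(x,z)}g(z).
\]
Solving for $\pi_*^o(x)$ in terms of $g(x)$ gives $g(x)^{\k/(\k-1)}\nu(x)^{-1/(\k-1)}=\pi_*^o(x)$. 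Substituting this into the relation above yields
\[
g(x)^{1/(\k-1)}\propto\nu(x)^{1/(\k-1)}\sum_z e^{\beta h(x,z)}g(z),
\]
that is, $g(x)\propto\nu(x)\bigl(\sum_z\psi(x,z)g(z)\bigr)^{\k-1}$, which is exactly \eqref{eqn:cavity}.

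By the one-to-one correspondence in Lemma \ref{lemma:TISGM-to-cavity}, $\ell$ determines some $\pmb\rho\in\TISGM^{e^{\beta h},\nu}_{T^\k}$. By \eqref{eqn:edge-marginal-cavity} its edge marginal is proportional to $\ell(x)e^{\beta h(x,z)}\ell(z)$, which is proportional to $\pi_*(x,z)$. Both are probability measures, so they are equal, and $\pi_{\pmb\rho_1}=\pi_*$.

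The main obstacle is doing the differentiation carefully on the symmetric simplex. One must handle the diagonal and off-diagonal entries consistently, so that the resulting equation is the symmetric one above rather than one with spurious factors of $2$. One must also confirm the regularity condition (independence of the constraint gradients) so that the Lagrange multiplier theorem applies.
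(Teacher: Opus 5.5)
Your proof is correct and follows the same route as the paper: a Lagrange multiplier (KKT) step on the affine constraint set near the interior point $\pi_*$, with regularity coming from $h$ being non-constant, then identification of the critical point as $\ell(x)e^{\beta h(x,z)}\ell(z)$ with $\ell$ a fixed point of the cavity recursion \eqref{eqn:cavity}, and finally Lemma~\ref{lemma:TISGM-to-cavity}. The only difference is that you derive the fixed-point representation by hand through $g(x)=\pi_*^o(x)^{(\k-1)/\k}\nu(x)^{1/\k}$, including a correct treatment of the diagonal and off-diagonal entries, whereas the paper imports that step from \cite[Proposition 1.7]{DemMonSlySun14}.
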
  
\begin{proof}
        First, observe from  
        \eqref{eqn:J-nu-k} and the assumption $\nu \in {\mathcal P}^+ (\X)$,  that the  objective function $J^\nu_\k:\Psym{\X}\rightarrow \R$ of the edge optimization problem \eqref{opt:pi} can be rewritten as
         \begin{equation*}
            J^\nu_\k(\pi)=(\k-1)H(\pi^o)-\frac{\k}{2}H(\pi)-\sum_{x\in \X}\pi^o(x)\log \nu(x),\quad \forall \pi\in \Psym{\X}. 
        \end{equation*}
       Let $\pi_*$ be an interior local minimizer of the problem \eqref{opt:pi}.  
        We now verify the conditions required to invoke the Karush-Khun-Tucker (KKT) Theorem (see \cite[Theorem 21.1]{ChoZak13}) 
        for the constrained optimization problem \eqref{opt:pi}. First, observe that $J^\nu_\k$ 
        is smooth in a neighborhood of $\pi_*$ since $\pi_*\in \Psympos{\X}$, 
        and  $H,$ $\pi^0$ and the map $\pi \to H(\pi^o)$   are all smooth on $\Psympos{\X}$. Next, we check that $\pi_*$ is a {\em regular point}, that is, the gradients of active (i.e., constraints satisfied with equality) constraint functions  are linearly independent. Since $\pi_*\in \Psympos{\X}$, the active constraints are 
        \begin{equation*}
            \sum_{x,z\in \X}\pi(x,z)=1,\quad \sum_{x,z\in \X}\pi(x,z)h(x,z)=\cc\quad  \text{and}\quad\pi(x,z)-\pi(z,x)=0,\quad \forall x< z\in \X,
        \end{equation*}
        whose gradients can be represented by $|\X|\times |\X|$ matrices $[1]_{x,z\in \X}$, $[h(x,z)]_{x,z\in \X}$ and $[\xi_{x'z'}(x,z)]_{x,z\in \X}$ for every $x'< z'\in \X$, respectively, where
        \begin{align*}
            \xi_{x'z'}(x,z)\coloneqq \begin{dcases}
                1 & \text{ if }x=x',z=z';\\
                -1 & \text{ if }x=z',z=x';\\
                0 & \text{ otherwise.}
            \end{dcases}
        \end{align*}To show that they are linearly independent, let $\alpha,\lambda\in \R$ and $\{\gamma_{x'z'}\}_{x'<z'\in \X}\subset \R$ be such that
        \begin{equation*}
            \alpha [1]+\lambda[h]+\sum_{x'< z'\in \X}\gamma_{x'z'}[\xi_{x'z'}]=[0].
        \end{equation*}
        Then, together with the fact that $[h]^T=[h]$, since $h$ is symmetric, and the fact that $[\xi_{x'z'}]+[\xi_{x'z'}]^T=[0]$ by the definition of $\xi_{x'z'}$, it follows that $2\alpha[1]+2\lambda[h]=[0]$. However, since $h$ is not a constant function (otherwise $(\k h_{\min},\k h_{\max})=\emptyset$), we see that $\alpha=\lambda=0$ and thus $\gamma_{x'z'}=0$ for all $x'<z'\in \X$.
        Therefore, the KKT theorem implies the existence of $\beta\in \R$ such that $\pi_*$ is a critical point of the map 
     \begin{equation}
     \Psympos{\X} \ni \pi  \mapsto J_\k^\nu(\pi)-\beta\k\inprod{h,\pi}.  
     \label{eqn:first-order-crit}
    \end{equation}
 An application of  \cite[Proposition 1.7]{DemMonSlySun14} with $d$, $\Delta$, $\pmb{h}$, $\bar{h}$, $h$, $\bar{\psi}$, 
 $\psi$ and the map $\Phi$ 
     therein corresponding to 
     $\k$, $\Psym{\X},$ $\pi$, 
     $\pi^0$, $\ell$, $\nu$, the function $\exp\pr{\beta h}$, and the map  $\pi\mapsto-J_\k^\nu(\pi)+\beta \k\inprod{h,\pi}$, respectively, shows that any critical point $\pi_*$ of \eqref{eqn:first-order-crit} can be represented as
    \begin{equation*}
        \pi_*(x,z)\propto \ell(x)\exp{\pr{\beta h(x,z)}}\ell(z),\quad \forall x,z\in \X,
    \end{equation*}
    where $\ell\in \P(\X)$ is a fixed point of the recursion $\BP$ in \eqref{eqn:cavity} with $(\psi,\bar\psi)=(\exp{\pr{\beta h}},\nu)$. Hence, by Lemma \ref{lemma:TISGM-to-cavity} 
    $\pi_*$ is the edge marginal of the $\TIS$ Gibbs measure on $T^\k$ with specification $(\exp\pr{\beta h},\nu)$ and (homogeneous) boundary law $\ell$.
\end{proof}

\begin{remark}
    \label{rem-Lagrange} 
    The value(s) of $\beta$ in the last proposition correspond to  Lagrange multipliers in the  optimization problem, as in \eqref{eqn:first-order-crit}.    They can be  determined for specific models using  further  knowledge of the structure of the edge potential $h$. For example, this is carried out for two-spin systems and the consensus edge potential in Section \ref{sec:Ising-phase-transition-proof}. 
\end{remark}

\begin{remark} 
Combining Proposition \ref{prop:int_local_min_to_Gibbs}, \eqref{eqn:opti-pi-to-opti-rho} in Proposition \ref{prop:optsimp} and Lemma \ref{lemma:unimodular-extension-of-splitting-Gibbs-measure}, we see that every interior local minimizer $\pi_*$ of the edge optimization problem $\RRR_{edge}(\cc)$ corresponds to a non-degenerate $\TIS$ Gibbs measure $\pmb\rho$ and furthermore that if $\pi_*$ were  the unique (global) minimizer of $\RRR_{edge}(\cc)$, 
that is, if ${\mathcal M}_{edge} (\cc) = \{\pi_*\}$, then the corresponding $\pmb\rho$ would be the conditional limit of $\{U_n\}$. 
\end{remark}

However, in general $\RRR_{edge}(\cc)$ might not have a unique (global) minimizer and,  even when it does, this minimizer need not  lie in the interior, as illustrated by Example 
\ref{example:boundary-global-minimizer} in 
 Section \ref{subs:existence-of-boundary-minimizers}.

\subsection{Existence of boundary minimizers and the connection with degenerate $\TIS$ Gibbs measures} \label{subs:opt2}

We now provide two examples to illustrate the existence of boundary (local and global) minimizers.

\subsubsection{Existence of boundary minimizers and degenerate conditional limits}
\label{subs:existence-of-boundary-minimizers} 

\begin{example}[Boundary local minimizers exist even for uniform mark distributions]\label{example:boundary-local-minimizers}
Let $\X=[q]$ for $q\geq 2$, $\nu=\mathsf{Unif}(\X)$ and $\k\geq 3$. Consider the edge potential $h$ defined by
\begin{align*}
    h(x,z)\coloneqq \begin{dcases}
        1 & \text{if }(x,z)=(q,q);\\
        -1 & \text{if }x=q \text{ or }z=q \text{ and }(x,z)\neq (q,q);\\
        0.5 &\text{if }x,z\in [q-1],
    \end{dcases}
\end{align*}
with constraint value $c=0.5\k\in \cpr{\k h_{\min},\k h_{\max}} = (-\k, \k)$. Then, as proved in Section \ref{subs:pf-boundary-local-minimizers}, 
\begin{align*}
    \pi_*(x,z)\coloneqq \begin{dcases}
        0 & \text{if }x=q \text{ or }z=q;\\
        (q-1)^{-2} &\text{if }x,z\in [q-1]; 
    \end{dcases}
\end{align*}
is a boundary local minimizer of the edge optimization problem in \eqref{opt:pi}.  
\end{example}

\begin{example}[Boundary global minimizers exist even for two-spin systems]\label{example:boundary-global-minimizer} Let $\X=\pr{1,-1}$, $\nu(1)=1/3$, $\nu(-1)=2/3$ and $\k=5$. Consider the edge potential defined by
\begin{align}
    h(x,z)\coloneqq \begin{dcases}
        7 & \text{ if }(x,z)=(1,1);\\
        -5 & \text{ if }(x,z)=(1,-1) \text{ or }(x,z)=(-1,1);\\
        4 & \text{ if }(x,z)=(-1,-1); 
    \end{dcases}\label{eqn:boundary-global-h}
\end{align}
with constraint value $\cc=\k\cdot h(-1,-1)=20\in (\k h_{\min},\k h_{\max})$. Then, as shown in Section \ref{subs:pf-boundary-global-minimizers},
\begin{align*} \pi_*\coloneqq\delta_{(-1,-1)}
\end{align*}
is the unique (global) minimizer of the edge optimization problem $\RRR_{edge}(\cc)$. 

Moreover,  using \eqref{eqn:opti-pi-to-opti-rho} of Proposition \ref{prop:int_local_min_to_Gibbs}, it follows that  $\optset(\cc)=\pr{\UGW_1(\mu^{(\delta_{(-1,-1)})})=\delta_{T^{\k,-}}}$. Together with Proposition \ref{prop:gibbs cond}, we conclude that $\{U_n\}$ given $\Conset_n(c)$ converges to $\delta_{T^{\k,-}}$, which is a degenerate $\TIS$ Gibbs measure with $\X^\downarrow=\pr{-1}$. 
\end{example}
\begin{remark}[A new type of degeneracy] \label{remark:degenerate-not-freezing}
The degeneracy that appears in Example \ref{example:boundary-global-minimizer} is different from the one that arises in 
 Theorem \ref{thm:Ising-phase-transition}(2) when $p\neq 1/2$, $h$ is the consensus function $m$ and the constraint value $\cc=\k=\k m_{\max}$ lies on the boundary of $(\k m_{\min},\k m_{\max})$).  In latter case, the degenerate conditional limit can be described by an Ising measure with freezing temperature $\beta=\infty$. In contrast, the degenerate conditional limit in Example 
\ref{example:boundary-global-minimizer} 
  does not correspond to any $\TIS$ Gibbs measures with state space $\X$ and specification $(\exp\{\beta h\},\nu)$ even if we allow freezing temperatures ($\beta=\infty$ or $-\infty$). This is because Gibbs measures with freezing temperatures would have edge marginals supported on $\argmin_{x,z\in \X}h(x,z)$ if $\beta=-\infty$, and on $\argmax_{x,z\in \X}h(x,z)$ if $\beta=\infty$, but the constraint value $\cc=\k\inprod{h,\pi_*}\in (\k h_{\min},\k h_{\max})$.
\end{remark}

\subsubsection{From boundary local minimizers to degenerate $\TIS$ Gibbs measures}\label{subs:boundary-local-minimizer-to-degenerate-TISGM}
Example \ref{example:boundary-global-minimizer} illustrates the emergence of degenerate $\TIS$ Gibbs measures as potential conditional limits.  
In this section, we provide a more systematic connection between boundary local minimizers of the edge optimization problem and the degenerate $\TIS$ Gibbs measures in $\Gibbsdeg$ of Theorem \ref{thm:Gibbs-deg-and-nondeg}.

We start by  computing the ``derivative'' of $J_\k^\nu$ in $\Psympos{\X}$.  Recall from the definition of $J^\nu_\k$ in \eqref{eqn:J-nu-k} that $J_\k^\nu$ is expressed as the sum of various relative entropies, which  blow up at the rate $\varepsilon\log \varepsilon$ near the boundary, so the correct scaling here should be $|\varepsilon\log \varepsilon|$.

\begin{lemma}\label{lemma:eloge-derivative}
Fix $\nu\in \P(\X)$ and $\k\geq 2$. Given $\pi\in \Psym{\X}$, for any $\xi\in \Psym{\X}$, define
\begin{equation*}
    \pi_\varepsilon=\pi_\varepsilon[\xi]\coloneqq (1-\varepsilon)\pi+\varepsilon\xi,\quad \forall \varepsilon\in [0,1].
\end{equation*}
Then
\begin{equation}
    \lim_{\varepsilon\rightarrow 0^+}\frac{1}{|\varepsilon\log \varepsilon|}\cpr{J_\k^\nu(\pi_\varepsilon)-J_\k^\nu(\pi)}=(\k-1)\sum_{x\in \X}\xi^o(x)\indf_{\pr{\pi^o(x)=0}}-\frac{\k}{2}\sum_{x,z\in \X}\xi(x,z)\indf_{\pr{\pi(x,z)=0}}.
\end{equation}
\end{lemma}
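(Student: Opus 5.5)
We will work from the entropy form of $J^\nu_\k$ already used in the proof of Proposition \ref{prop:int_local_min_to_Gibbs}:
\begin{equation*}
J^\nu_\k(\pi)=(\k-1)H(\pi^o)-\frac{\k}{2}H(\pi)-\sum_{x\in\X}\pi^o(x)\log\nu(x),
\end{equation*}
where $H$ is the Shannon entropy \eqref{defn:entropy}. This identity holds on all of $\Psym{\X}$, including the boundary, with the convention $0\log 0=0$. It follows by expanding $H(\pi^o\|\nu)$ and $H(\pi\|\pi^o\otimes\pi^o)$, using the symmetry of $\pi$ to write $\sum_{x,z}\pi(x,z)\log\pi^o(x)\pi^o(z)=2\sum_x\pi^o(x)\log\pi^o(x)$.

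Since $\pi\mapsto\pi^o$ is linear, $(\pi_\varepsilon)^o=(1-\varepsilon)\pi^o+\varepsilon\xi^o$. The last term of $J^\nu_\k$ is linear in $\pi$, so its increment is $O(\varepsilon)=o(|\varepsilon\log\varepsilon|)$. The problem therefore reduces to a single scalar fact applied coordinatewise.

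\textbf{Scalar fact.} For $a,b\in[0,1]$, set $\phi(t)=-t\log t$ and $a_\varepsilon=(1-\varepsilon)a+\varepsilon b$. Then
\begin{equation*}
\lim_{\varepsilon\to0^+}\frac{\phi(a_\varepsilon)-\phi(a)}{|\varepsilon\log\varepsilon|}=b\,\indf_{\{a=0\}}.
\end{equation*}
We check this in three cases.
\begin{itemize}
\item If $a>0$, then $\phi$ is $C^1$ near $a$, so the increment is $O(\varepsilon)$ and the ratio tends to $0$.
\item If $a=0$ and $b>0$, then $\phi(\varepsilon b)=-\varepsilon b\log\varepsilon-\varepsilon b\log b$, and dividing by $-\varepsilon\log\varepsilon$ (valid for $\varepsilon<1$) gives the limit $b$.
\item If $a=b=0$, both sides vanish.
\end{itemize}

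Applying the scalar fact to each $x\in\X$ with $(a,b)=(\pi^o(x),\xi^o(x))$ gives
\begin{equation*}
H(\pi_\varepsilon^o)-H(\pi^o)\sim|\varepsilon\log\varepsilon|\sum_x\xi^o(x)\indf_{\{\pi^o(x)=0\}}.
\end{equation*}
Applying it to each pair $(x,z)\in\X^2$ with $(a,b)=(\pi(x,z),\xi(x,z))$ gives
\begin{equation*}
H(\pi_\varepsilon)-H(\pi)\sim|\varepsilon\log\varepsilon|\sum_{x,z}\xi(x,z)\indf_{\{\pi(x,z)=0\}}.
\end{equation*}
Here $\sim$ means the stated limit holds after dividing by $|\varepsilon\log\varepsilon|$, with $0$ allowed as a limit. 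Both sums are finite since $\X$ is finite, so the limits add. Multiplying the first by $(\k-1)$ and the second by $-\k/2$, and adding the vanishing linear term, yields the claimed formula.

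No step should be a real obstacle. The only points needing care are that the entropy rewriting remains valid when $\pi$ has zeros (so that no infinite relative entropy appears), and that the sign convention $|\varepsilon\log\varepsilon|=-\varepsilon\log\varepsilon$ holds for small $\varepsilon$. The first is guaranteed because $\pi\ll\pi^o\otimes\pi^o$ always, and $\nu$ has full support.
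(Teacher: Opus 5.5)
Your proposal is correct and is essentially the paper's argument: the paper only says the formula follows ``by direct computation'' (pointing to the proof of Proposition 3.2 in \cite{DemMonSun13}), and your rewriting $J^\nu_\kappa(\pi)=(\kappa-1)H(\pi^o)-\frac{\kappa}{2}H(\pi)-\sum_x\pi^o(x)\log\nu(x)$, followed by the coordinatewise expansion of $t\mapsto -t\log t$, supplies exactly that computation. You also handle the delicate points correctly: the full support of $\nu$ (the paper's standing assumption) makes the linear term $O(\varepsilon)$, and $\pi\ll\pi^o\otimes\pi^o$ keeps the entropy identity valid on the boundary.
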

\begin{proof}
    This is easily  shown by direct computation 
    (e.g., see the  proof of \cite[Proposition 3.2]{DemMonSun13}). 
\end{proof}

The next lemma shows that we can exclude the cases  where there is not even a single interior probability measure in the constraint set $\BB_h(\cc)$, by adding a minimal assumption on the constraint value $\cc$.

\begin{lemma}\label{lemma:existence-of-degenerate-set}
    Fix any mark distribution $\nu \in \P(\X)$, edge potential $h$ and $\k\geq 2$. Then $\cc\in \cpr{\k h_{\min},\k h_{\max}}$ if and only if $ \Psympos{\X}\cap \BB_h(\cc)\neq \emptyset$. Moreover, if $\cc\in \cpr{\k h_{\min},\k h_{\max}}\setminus \mathsf{Range}(\k h)$, then for any $\X^\downarrow \subseteq \X$ such that $\Psym{\cpr{\X^\downarrow}}\cap \BB_h(\cc)\neq \emptyset$, we have $\Psympos{\cpr{\X^\downarrow}}\cap \BB_h(\cc)\neq \emptyset$.
\end{lemma}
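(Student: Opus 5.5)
The statement has two parts, and I plan to prove each by reducing it to a statement about convex combinations of measures in $\Psym{\X}$. The key observation is that $\pi\mapsto \k\E_\pi[h(X_o,X_1)]$ is linear.

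For the first part, the forward direction goes as follows. Assume $\cc\in(\k h_{\min},\k h_{\max})$. Choose $(x_1,z_1)$ attaining $h_{\min}$ and $(x_2,z_2)$ attaining $h_{\max}$. Form the symmetric point masses
\[
\sigma_i=\tfrac12(\delta_{(x_i,z_i)}+\delta_{(z_i,x_i)}),
\]
which lie in $\Psym{\X}$ and satisfy $\k\langle h,\sigma_1\rangle=\k h_{\min}$ and $\k\langle h,\sigma_2\rangle=\k h_{\max}$. Let $u$ be the uniform measure on $\X^2$, which lies in $\Psympos{\X}$. Take $\pi=(1-\varepsilon)\big(t\sigma_1+(1-t)\sigma_2\big)+\varepsilon u$. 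For small $\varepsilon>0$ this has full support, and its $h$-value is an affine function of $t\in[0,1]$. At $t=1$ that value lies within $O(\varepsilon)$ of $\k h_{\min}$, and at $t=0$ within $O(\varepsilon)$ of $\k h_{\max}$. Since $\cc$ is strictly between these endpoints, once $\varepsilon$ is small enough some $t\in[0,1]$ gives exactly $\cc$. This produces an element of $\Psympos{\X}\cap\BB_h(\cc)$.

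For the converse, suppose $\pi\in\Psympos{\X}\cap\BB_h(\cc)$. Then $\cc/\k$ is a weighted average of all values of $h$ with strictly positive weights. If $\cc$ equaled $\k h_{\min}$, every value would have to equal $h_{\min}$, so $h$ would be constant, and the same argument applies to $h_{\max}$. For a constant $h$ the open interval is empty, and in that case I would treat the statement as the vacuous comparison in which $\BB_h(\cc)$ is either empty or all of $\Psym{\X}$. I should note this degenerate case explicitly, or assume $h$ is non-constant as the paper does elsewhere. Otherwise we get $\cc\in(\k h_{\min},\k h_{\max})$ directly.

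For the second part, fix $\X^\downarrow$ and some $\pi\in\Psym{(\X^\downarrow)}\cap\BB_h(\cc)$. Then $\cc$ is a convex combination of the values $\k h(x,z)$ with $x,z\in\X^\downarrow$. Let $h^\downarrow_{\min}$ and $h^\downarrow_{\max}$ be the extremes of $h$ restricted to $\X^\downarrow\times\X^\downarrow$, so that $\cc\in[\k h^\downarrow_{\min},\k h^\downarrow_{\max}]$. Both endpoints are values of $\k h$, while by hypothesis $\cc\notin\mathsf{Range}(\k h)$, so $\cc$ must lie in the open interval $(\k h^\downarrow_{\min},\k h^\downarrow_{\max})$. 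Now I apply the first part with $\X$ replaced by $\X^\downarrow$ and $h$ replaced by $h^\downarrow$; this is legitimate because the first part did not depend on $\nu$. It yields an element of $\Psympos{(\X^\downarrow)}\cap\BB_h(\cc)$, as required.

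There is no real obstacle here. The only care needed is the endpoint bookkeeping: using $\cc\notin\mathsf{Range}(\k h)$ to pass from the closed interval to the open one, and handling constant $h$ in the "only if" direction.
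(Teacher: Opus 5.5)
Your proposal is correct and follows essentially the paper's argument: the first part rests on linearity of $\pi\mapsto\kappa\langle h,\pi\rangle$ and on mixing in full-support symmetric measures, and your second part matches the paper's. The only difference is the construction in the forward direction: the paper mixes the uniform measures on $\{\kappa h>c\}$, $\{\kappa h<c\}$ and $\{\kappa h=c\}$ to hit $c$ exactly, whereas you perturb symmetrized extreme point masses by $\varepsilon u$ and use the intermediate value theorem.

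One correction to your caveat about constant $h$. For $h\equiv a$ and $c=\kappa a$, the ``if'' direction is not vacuous but false: $\mathcal{B}_h(c)$ is then all symmetric measures, so it meets the interior, while the open interval is empty. Non-constancy of $h$ is therefore a genuine hypothesis. The paper's proof also uses it tacitly when it asserts $\langle \kappa h,\xi\rangle<\kappa h_{\max}$ for every interior $\xi$.
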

\begin{proof}
    We begin with the ``only if'' direction of the first assertion of the lemma.  Partition $\X^2$ into the sets $\calH_>(\cc),\calH_<(\cc)$ and $\calH_=(\cc)$  defined by
    \begin{align*}
\calH_\gtreqqless(\cc)\coloneqq \pr{(x,z)\in \X^2:\k h(x,z)\gtreqqless \cc}.
    \end{align*}
    Then the fact that $\cc\in \cpr{\k h_{\min},\k h_{\max}}$ implies that $\calH_>(\cc)\neq \emptyset$ and $\calH_<(\cc)\neq \emptyset.$ Let $\xi^+$ and $\xi^-$ be the uniform measure on $\calH_>(\cc)$ and $\calH_<(\cc)$ with mean $\inprod{\k h,\xi^+}=\cc^+ > c$ and $\inprod{\k h,\xi^-}=\cc^- < c$, respectively. Note that both $\xi^+$ and $\xi^-$ lie in $\Psym{\X}$ since $h$ is symmetric. Moreover, since $\cc^+>\cc>\cc^-$, there exists  $\alpha\in (0,1)$ such that $\alpha \cc^++(1-\alpha) \cc^-=\cc$. Then $\xi\coloneqq \alpha \xi^++(1-\alpha)\xi^-$ 
    satisfies $\inprod{\k h,\xi}=\cc$. If $\calH_=(\cc)=\emptyset$, then $\xi$ belongs to $ \Psympos{\X}\cap \BB_h(\cc)$. Otherwise, the average of $\xi$ and the uniform measure on $\calH_=(\cc)$ would lie in $\Psympos{\X}\cap \BB_h(\cc)$. In either case,  $\Psympos{\X}\cap \BB_h(\cc) \neq \emptyset$, and the ``only if'' direction of the first assertion of the lemma follows.

    For the ``if'' direction of the first assertion, note that if $\cc\notin (\k h_{\min},\k h_{\max})$, then we even have $\P\cpr{\X^2}\cap \BB_h(\cc)=\emptyset$. 
    Indeed, if $\cc=\k h_{\max}$, then for any $\xi\in \Psympos{\X}$, we have $\inprod{\k h,\xi}<\k h_{\max}$, thus $ \Psympos{\X}\cap \BB_h(\cc)=\emptyset$, contradiction.

    We now turn to the proof of the second assertion. Assume  $\cc\in \cpr{\k h_{\min},\k h_{\max}}\setminus \mathsf{Range}(\k h)$, and  $\X^\downarrow\subseteq \X$ is such that 
    $\Psym{\cpr{\X^\downarrow}}
    \cap \BB_h(\cc)\neq \emptyset$. Then it follows that \begin{equation*}
        \cc\in [\k \cdot\min_{x,z\in \X^\downarrow}h(x,z),\k\cdot\max_{x,z\in \X^\downarrow}h(x,z)]\setminus \mathsf{Range}(\k h)\subseteq (\k\cdot \min_{x,z\in \X^\downarrow}h(x,z),\k \cdot\max_{x,z\in \X^\downarrow}h(x,z)).
    \end{equation*}
    Consequently, from the first assertion of this lemma, it follows that $\Psympos{\cpr{\X^\downarrow}}\cap \BB_h(\cc)\neq \emptyset$. This completes the proof of the lemma.  
\end{proof}

The next lemma shows that for any spin system 
the marginal of any boundary local minimizer must assign zero mass to at least one of the spin values.

\begin{lemma}[Boundary local minimizers must be degenerate] \label{lemma:pos_marginal_cannot_be_locmin}
    Let $\nu\in \P(\X)$ and $\k\geq 2$. Fix  an 
edge potential $h$ and $\cc\in \R$ such that $\cc\in (\k h_{\min},\k h_{\max})$.  If $\pi_*$ is a boundary local minimizer of the edge optimization problem $\RRR_{edge}(\cc)$ in \eqref{opt:pi}, then $\pi_*^o\in \Pzero(\X)$. Moreover, if we further assume that $\cc\in (\k h_{\min},\k h_{\max})\setminus \mathsf{Range}(\k h)$, then for any boundary local minimizer $\pi_*$ of the edge optimization problem $\RRR_{edge}(\cc)$ in \eqref{opt:pi}, there exists a unique nonempty set $\X^\downarrow \subsetneq \X$ such that
    \begin{align*}
        \pi_*\in \Psympos{\cpr{\X^\downarrow}}.
    \end{align*}
\end{lemma}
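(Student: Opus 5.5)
We argue by contradiction using the $|\varepsilon\log\varepsilon|$-directional derivative from Lemma \oldref{lemma:eloge-derivative}. Let $\pi_*$ be a boundary local minimizer and suppose $\pi_*^o\in\Ppos(\X)$. Then the first term in Lemma \oldref{lemma:eloge-derivative} vanishes for every direction $\xi$. Since $\pi_*$ lies on the boundary, the set $Z_*:=\{(x,z):\pi_*(x,z)=0\}$ is nonempty and symmetric. For any $\xi\in\Psym{\X}$ with $\xi(Z_*)>0$, we therefore get
\[
\lim_{\varepsilon\to0^+}\frac{J^\nu_\k(\pi_\varepsilon)-J^\nu_\k(\pi_*)}{|\varepsilon\log\varepsilon|}=-\frac{\k}{2}\,\xi(Z_*)<0 .
\]
We need such a $\xi$ for which the path $\pi_\varepsilon=(1-\varepsilon)\pi_*+\varepsilon\xi$ stays in $\BB_h(\cc)$, i.e.\ with $\k\langle h,\xi\rangle=\cc$. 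Lemma \oldref{lemma:existence-of-degenerate-set} supplies $\xi\in\Psympos{\X}\cap\BB_h(\cc)$, using $\cc\in(\k h_{\min},\k h_{\max})$. Since $\xi$ has full support, $\xi(Z_*)>0$. As $\BB_h(\cc)$ is convex, $\pi_\varepsilon\in\BB_h(\cc)$, and $\pi_\varepsilon\to\pi_*$, so $\pi_\varepsilon$ eventually lies in any relatively open neighbourhood $\mathcal U$ of $\pi_*$. This gives $J^\nu_\k(\pi_\varepsilon)<J^\nu_\k(\pi_*)$ for small $\varepsilon$, contradicting local minimality. Hence $\pi_*^o\in\Pzero(\X)$.

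For the second assertion, set $\X^\downarrow:=\{x:\pi_*^o(x)>0\}$. It is nonempty, and by the first part it is a strict subset of $\X$. Since $\pi_*^o(x)=\sum_z\pi_*(x,z)$, every $(x,z)$ with $x\notin\X^\downarrow$ has $\pi_*(x,z)=0$, and by symmetry the same holds when $z\notin\X^\downarrow$. So $\pi_*$ is supported on $\X^\downarrow\times\X^\downarrow$, and we may view $\pi_*\in\Psym{\cpr{\X^\downarrow}}\cap\BB_h(\cc)$. Uniqueness of $\X^\downarrow$ is then automatic: any $\X'$ with $\pi_*\in\Psympos{\cpr{\X'}}$ must equal the support of $\pi_*^o$.

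It remains to show $\pi_*$ has full support on $(\X^\downarrow)^2$. We repeat the argument above inside $\X^\downarrow$. Suppose $Z_*^\downarrow:=\{(x,z)\in(\X^\downarrow)^2:\pi_*(x,z)=0\}$ is nonempty. Because $\cc\notin\mathsf{Range}(\k h)$ and $\Psym{\cpr{\X^\downarrow}}\cap\BB_h(\cc)\neq\emptyset$, the second assertion of Lemma \oldref{lemma:existence-of-degenerate-set} gives $\xi\in\Psympos{\cpr{\X^\downarrow}}\cap\BB_h(\cc)$. We extend $\xi$ by zero to $\X^2$. Now $\xi^o$ vanishes off $\X^\downarrow$, where $\pi_*^o=0$, and on $\X^\downarrow$ we have $\pi_*^o>0$. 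So the first term of Lemma \oldref{lemma:eloge-derivative} again vanishes, and the second term equals $-\frac{\k}{2}\xi(Z_*^\downarrow)<0$. The convex-path argument then gives the same contradiction. Hence $\pi_*\in\Psympos{\cpr{\X^\downarrow}}$.

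The main point needing care is the choice of perturbation direction. It must respect the linear constraint exactly and must not charge any vertex where $\pi_*^o=0$, since that would make the positive $(\k-1)$ term appear. Restricting $\xi$ to $\X^\downarrow$, and using the non-range assumption to guarantee an interior feasible $\xi$ there, handles both requirements.
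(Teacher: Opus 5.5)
Your proof is correct and follows the paper's argument. For the first assertion you move from $\pi_*$ toward an interior feasible point supplied by Lemma \oldref{lemma:existence-of-degenerate-set} and apply the $|\varepsilon\log\varepsilon|$ expansion of Lemma \oldref{lemma:eloge-derivative}; for the second you restrict to $\X^\downarrow=\mathrm{supp}\,\pi_*^o$ and rerun the derivative argument there with the zero-extended direction $\xi$ (so the $(\k-1)$ term vanishes), which makes explicit a step the paper leaves implicit, since Lemma \oldref{lemma:existence-of-degenerate-set} alone only gives some interior feasible point on $\X^\downarrow$, not that $\pi_*$ itself has full support there.
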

\begin{proof} 
Fix a boundary local minimizer $\pi_*$ of the edge optimization problem and $\cc\in (\k h_{\min},\k h_{\max})$. We argue by contradiction to prove the first assertion.  
    Suppose, to the contrary, that  $\pi_*^o(x)>0$ for all $x\in \X$. 
    By Lemma \ref{lemma:existence-of-degenerate-set} and the fact that $\cc\in (\k h_{\min},\k h_{\max})$, there exists  $\xi\in \BB_h(\cc)\cap \Psympos{\X}$. Then set $\pi_\varepsilon\coloneqq (1-\varepsilon)\pi_*+\varepsilon\xi$, and note that  $\pi_\varepsilon \in \BB_h(\cc)\cap \Psympos{\X}$ and 
    by Lemma \ref{lemma:eloge-derivative}, we have
    \begin{align*}
        \lim_{\varepsilon\rightarrow 0^+}\frac{1}{|\varepsilon\log \varepsilon|}\cpr{J_\k^\nu(\pi_\varepsilon)-J_\k^\nu(\pi_*)}&=(\k-1)\sum_{x\in [q]}\xi^o(x)\indf_{\pr{\pi^o_*(x)=0}}-\frac{\k}{2}\sum_{x,z\in [q]}\xi(x,z)\indf_{\pr{\pi_*(x,z)=0}}\nonumber\\
        &=-\frac{\k}{2}\sum_{x,z\in [q]}\xi(x,z)\indf_{\pr{\pi_*(x,z)=0}}\nonumber\\
        &<0,
    \end{align*}
    where in the second line we used our assumption that $\pi_*^o (x)>0$ for all $x \in \X$  and in the last line, we used $\xi\in \Psympos{\X}$ and $\pi_*\in  \Psymzero{\X}$. This contradicts the supposition that $\pi_*$ is a local minimizer of the edge optimization problem, and the first assertion is proved. 
    
    The first assertion implies that if 
    $\X^{\downarrow} := \supp \pi_*^o$, then 
     $\X^{\downarrow} \subsetneq \X$, and 
    clearly $\pi_*(x,z)=0$ if $x\notin \X^\downarrow$ or $z\notin \X^\downarrow$   and hence,  $\pi_* \in 
    \Psym{\cpr{\X^\downarrow}}
    \cap \BB_h(\cc)$. 
    Thus, if  $\cc\in (\k h_{\min},\k h_{\max})\setminus \mathsf{Range}(\k h)$,  Lemma \ref{lemma:existence-of-degenerate-set} implies that $\pi_*\in \Psympos{\cpr{\X^\downarrow}}$.  This proves the second assertion. 
\end{proof}

\subsection{Proof of Theorem \ref{thm:Gibbs-deg-and-nondeg}}
\label{subs:pf-of-Gibbs-deg-nondeg}

Fix an arbitrary edge potential $h$, mark distribution $\nu \in \P(\X)$ and 
$ \cc \in (\k h_{\min}, \k h_{\max}) \setminus  \textsf{Range} (\k h)$  such that $\cc \neq \sref$. 
 By Definition \ref{def-condlim} and the Gibbs conditioning principle in Proposition \ref{prop:gibbs cond}, it suffices to show that  $\Gibbsdeg(\cc)$ contains the set 
 $\optset(\cc)$ of minimizers of the  minimization problem for $\RRR(c)$ in \eqref{eqn:Iinf-optimization}.
 Applying Proposition \ref{prop:optsimp}  
 we see that 
  \[ \optset (\cc) = \{\UGW_1 (\mup ): \pi \in \calM_{edge}(c)\}. \]
 Also, Lemma \ref{lemma:pos_marginal_cannot_be_locmin} shows that for every $\pi_* \in \calM_{edge}(c)$, there exists $\X^\downarrow \subseteq \X$ such that 
  $\pi_*\in \Psympos{\X^\downarrow}\cap \BB_h(\cc)$. Together with the identity $J^{\nu^\downarrow}_\k(\pi)=J^{\nu}_\k(\pi)+\log \nu(\X^\downarrow)$ for any $\pi\in \Psym{\X^\downarrow}$ from 
  \eqref{eqn:J-nu-k}, and 
  the   fact that $\pi_*$ is a (global) minimizer of the map $\Psym{\X}\cap \BB_h(\cc)\ni \pi\mapsto J^\nu_\k(\pi)$, it follows that $\pi_*$ is necessarily also a (global) minimizer of the restricted map $\Psym{\X^\downarrow}\cap \BB_{h^\downarrow}(\cc)\ni \pi\mapsto J^{\nu^\downarrow}_\k(\pi)$ with edge potential $h^\downarrow \coloneqq h\mid_{\X^\downarrow\times \X^\downarrow}$ and reference measure $\nu^\downarrow=\nu(\cdot\mid \X^\downarrow)$ 
  Together with Proposition 
\ref{prop:int_local_min_to_Gibbs} this implies that 
\begin{align*}
    \calM_{edge}(c) \subseteq 
\bigcup_{\X^\downarrow \subseteq \X} \bigcup_{\beta \in \R} \pr{ \pi_{\rho_1}:  \rho \in \TISGM^{\exp\pr{\beta h^\downarrow},\nu^\downarrow}_{T^\k}} \bigcap \BB_h(c).
\end{align*}  
On the other hand, Lemma 
\ref{lemma:unimodular-extension-of-splitting-Gibbs-measure} 
 shows that every $\rho \in \TISGM^{\exp\pr{\beta h^\downarrow},\nu^\downarrow}_{T^\k}$ 
 satisfies $\rho = \UGW_1 (\mu^{(\pi_{\rho_1})})$. 
Together with   the last two displays and the definition of $\Gibbsdeg(\cc)$ 
 in \eqref{eqn:TIGSMs-deg-and-nondeg}, 
 this proves the theorem. 

\section{Sufficient conditions for non-degeneracy of conditional limits}\label{sec:sufficient-condition-for-nondegeneracy}

 In Section \ref{subs:condition1} 
 we introduce a sufficient condition 
for spin systems to have non-degenerate  conditional limits. 
This condition is verified  for 
two-spin systems with uniform mark distribution 
in Section \ref{subs:suffcond}.  Then, as shown in Section \ref{sec:verification}, the proof of Theorem \ref{thm:Gibbs-component-two-spin-uniform} 
follows as an 
immediate corollary. 

\subsection{A sufficient condition for non-degeneracy}
\label{subs:condition1}

\begin{proposition}
[Conditional limits when (global) minimizers lie  in the interior]\label{prop:constrained-min-interior}
Suppose the mark space $\X$, mark distribution $\nu \in \P (\X)$, edge potential $h$, and constraint value $c \neq \sref$ satisfy  the following condition: 
\begin{equation}
\label{eqn:cond}
  c \in (\k h_{\min}, \k h_{\max}) \quad \mbox{ and } \quad      \optset_{edge}(\cc)\subseteq \Psympos{\X}, 
    \end{equation} 
    where 
$\Psympos{\X}$ is the collection of interior symmetric measures on 
$\X^2$ introduced in \eqref{eqn:Psympos}, and 
${\mathcal M}_{edge} (c)$ is the set of minimizers of the optimization problem 
$\RRR_{edge}(c)$ in \eqref{opt:pi}.  Then 
 the sequence $\{U_n\}$ given $\Conset_n(h,\cc)$ lies asymptotically in the set $\Gibbsnondeg(\cc)$  defined in 
\eqref{eqn:Gibbs-limit-splitting}. 
    \end{proposition}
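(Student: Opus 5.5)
The plan is to follow the proof of Theorem \ref{thm:Gibbs-deg-and-nondeg} in Section \ref{subs:pf-of-Gibbs-deg-nondeg}, with the reduction to degenerate state spaces removed because \eqref{eqn:cond} rules out boundary minimizers. By Definition \ref{def-condlim} and Proposition \ref{prop:optsimp}, it suffices to show that the minimizing set
\[ \optset(\cc)=\pr{\UGW_1(\mu^{(\pi)}):\pi\in \optset_{edge}(\cc)} \]
is contained in $\Gibbsnondeg(\cc)$. Indeed, \eqref{eq:general cond limit}--\eqref{eq:general cond limit other way} hold for every open set $\openset\supset\optset(\cc)$. Hence, if $\optset(\cc)\subseteq\Gibbsnondeg(\cc)$, then every open set containing $\Gibbsnondeg(\cc)$ also contains $\optset(\cc)$, which gives the conclusion.

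First I check that $\optset_{edge}(\cc)$ is nonempty, so that the argument is not vacuous. The set $\BB_h(\cc)$ is a closed subset of the compact simplex $\P(\X^2)$, and it is nonempty by Lemma \ref{lemma:existence-of-degenerate-set}, since $\cc\in(\k h_{\min},\k h_{\max})$. The function $J^\nu_\k$ is continuous on $\Psym{\X}$ because $\X$ is finite, so a minimizer exists.

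Next, fix $\pi_*\in\optset_{edge}(\cc)$. By \eqref{eqn:cond}, $\pi_*\in\Psympos{\X}$. A global minimizer is in particular a local minimizer in the sense of Definition \ref{def:local-minimizer}, so $\pi_*$ is an interior local minimizer. Proposition \ref{prop:int_local_min_to_Gibbs} applies because $\cc\in(\k h_{\min},\k h_{\max})$. It yields $\beta\in\R$ and $\pmb\rho\in\TISGM^{\exp(\beta h),\nu}_{T^\k}$ with $\pi_{\pmb\rho_1}=\pi_*\in\BB_h(\cc)$.

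Finally, I identify the corresponding element of $\optset(\cc)$ with $\pmb\rho$. Lemma \ref{lemma:unimodular-extension-of-splitting-Gibbs-measure} gives $\pmb\rho_1=\mu^{(\pi_{\pmb\rho_1})}=\mu^{(\pi_*)}$ and $\pmb\rho=\UGW_1(\pmb\rho_1)$. Therefore $\UGW_1(\mu^{(\pi_*)})=\pmb\rho$, and this measure lies in $\TISGM^{\exp(\beta h),\nu}_{T^\k}\cap\{\rho:\pi_{\rho_1}\in\BB_h(\cc)\}\subseteq\Gibbsnondeg(\cc)$. Since $\pi_*$ was arbitrary, $\optset(\cc)\subseteq\Gibbsnondeg(\cc)$, which completes the proof.

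I expect no real obstacle here, because the hard analytic work is contained in Proposition \ref{prop:int_local_min_to_Gibbs} and in the identification via unimodular extension. The only points needing care are that global minimizers qualify as local minimizers, and that Proposition \ref{prop:optsimp} is applied with $\cc\neq\sref$ in both the upper-tail and lower-tail cases.
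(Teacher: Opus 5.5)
Your proposal is correct and follows essentially the same route as the paper's proof. You reduce, via the Gibbs conditioning principle and Proposition \ref{prop:optsimp}, to showing $\optset(\cc)\subseteq\Gibbsnondeg(\cc)$, and then use the interiority assumption with Proposition \ref{prop:int_local_min_to_Gibbs} and Lemma \ref{lemma:unimodular-extension-of-splitting-Gibbs-measure} to identify each $\UGW_1(\mu^{(\pi_*)})$ with a non-degenerate $\TIS$ Gibbs measure whose edge marginal lies in $\BB_h(\cc)$; your extra checks (nonemptiness of $\optset_{edge}(\cc)$, and that global minimizers are local minimizers) are harmless details the paper leaves implicit.
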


\begin{proof} 
Let $\optset(\cc)$ be the set of minimizers of the  minimization problem  for $\RRR (\cc)$ in  \eqref{eqn:Iinf-optimization}. By Definition \ref{def-condlim} and the Gibbs conditioning principle in Proposition \ref{prop:gibbs cond},  it suffices to show that $\optset(\cc)$ is contained in $\Gibbsnondeg(\cc)$.  To this end, first apply \eqref{eqn:opti-pi-to-opti-rho} of Proposition \ref{prop:optsimp}, next the assumption that $\optset_{edge} (\cc) \subseteq \Psympos{\X}$, the fact that $\optset_{edge} (\cc) \subseteq \BB_h(\cc)$ and  
  Proposition  \ref{prop:int_local_min_to_Gibbs}, and finally invoke Lemma \ref{lemma:unimodular-extension-of-splitting-Gibbs-measure}, 
  to obtain 
    \begin{align*}\calM (c) &= \{\UGW_1 (\mup ): \pi \in \calM_{edge}(c)\} \\
    & \subseteq 
     \left\{\UGW_1 (\mu^{(\pi_{\rho_1})} ): \rho \in \bigcup_{\beta \in \R} \TISGM^{\exp\pr{\beta h},\nu}_{T^\k}\right\} \bigcap 
     \{ \rho \in \P(\TkaXinf): \pi_{\rho_1} \in \BB_h(\cc)\}\\
&     =  \bigcup_{\beta \in \R}\TISGM^{\exp\pr{\beta h},\nu} _{T^\k} \bigcap 
     \{ \rho \in \P(\TkaXinf): \pi_{\rho_1} \in \BB_h(\cc)\}.
    \end{align*} 
Since the right-hand side of the last display coincides with $\Gibbsnondeg(\cc)$, this proves the proposition. 
   \end{proof}

\begin{remark} 
\label{rem:conj} 
In view of the last proposition,  
to show Conjecture \ref{conj:Gibbs-component} it suffices to show that 
multi-spin systems with uniform mark distributions  satisfy 
\eqref{eqn:cond} of Proposition \ref{prop:constrained-min-interior}, or equivalently that any (global) in $\optset_{edge}$ lies in the interior, for all $h$ and valid $c$.   
\end{remark}

In Proposition \ref{prop:two-spin-local-not-global} below, we verify  
\eqref{eqn:cond} of Proposition \ref{prop:constrained-min-interior} 
for two-spin systems with uniform mark distribution. 
It is natural to ask if this restriction to a uniform mark distribution is necessary. 
Theorem \ref{thm:Ising-phase-transition}(1) shows that for certain two-spin systems with $h=m$ being the consensus functional, 
this condition is satisfied  even when the mark distribution is not uniform. 
However, as is evident from Example \ref{example:boundary-global-minimizer}, if one wants the condition to be satisfied by  all $h$ and admissible $c$, then indeed the 
restriction to a uniform mark distribution may be necessary.

\subsection{Verification of the conjecture for certain two-spin models}
\label{subs:suffcond}

\begin{proposition}[Case when  minimizers lie in the interior] \label{prop:two-spin-local-not-global}
    Let $|\X|=2$ and $\nu=\mathsf{Unif}(\X)$. For any edge potential $h$ and $\cc\in (\k h_{\min},\k h_{\max})$, the set of  
    minimizers 
    $\optset_{edge}(\cc)$ defined in \eqref{opt:pi} 
    satisfies 
    \begin{equation}\label{eqn:constrained-min-interior}
        \optset_{edge}(\cc)\subseteq \Psympos{\X}.
    \end{equation}
    In particular, Conjecture \ref{conj:Gibbs-component} is valid when $|\X| = 2.$
\end{proposition}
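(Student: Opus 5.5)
The plan is to rule out boundary global minimizers by a direct comparison. By Lemma~\ref{lemma:pos_marginal_cannot_be_locmin}, such a minimizer is forced to be a point mass, and for a uniform $\nu$ a product measure always beats a point mass.

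Write $\X=\{a,b\}$. Every global minimizer is in particular a local minimizer in the sense of Definition~\ref{def:local-minimizer}. Suppose $\pi_*\in\optset_{edge}(\cc)$ lies in $\Psymzero{\X}$. The first assertion of Lemma~\ref{lemma:pos_marginal_cannot_be_locmin} needs only $\cc\in(\k h_{\min},\k h_{\max})$, and it gives $\pi_*^o\in\Pzero(\X)$. Since $|\X|=2$, this forces $\pi_*^o=\delta_x$ for some $x\in\{a,b\}$, and hence $\pi_*=\delta_{(x,x)}$; without loss of generality $x=a$. Membership in $\BB_h(\cc)$ then reads $\cc=\k h(a,a)$. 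Because $\cc$ lies in the open interval $(\k h_{\min},\k h_{\max})$, $h(a,a)$ is neither the minimum nor the maximum of $h$. So $D_1:=h(b,b)-h(a,a)$ and $D_2:=h(a,b)-h(a,a)$ are both nonzero and have opposite signs.

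Next compute the objective. For uniform $\nu$, \eqref{eqn:J-nu-k} gives
\[
J_\k^\nu(\pi)=\log 2-H(\pi^o)+\tfrac{\k}{2}H(\pi\|\pi^o\otimes\pi^o),
\]
where $H(\cdot)$ is Shannon entropy. In particular $J_\k^\nu(\delta_{(a,a)})=\log 2$, and for any product measure $J_\k^\nu(\mu\otimes\mu)=H(\mu\|\nu)=\log 2-H(\mu)$. It therefore suffices to exhibit a non-degenerate $\mu\in\P(\X)$ with $\mu\otimes\mu\in\BB_h(\cc)$. Then $J_\k^\nu(\mu\otimes\mu)<\log 2$, which contradicts the minimality of $\delta_{(a,a)}$.

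To find $\mu$, set $\mu=(s,1-s)$ and consider
\[
f(s):=s^2h(a,a)+2s(1-s)h(a,b)+(1-s)^2h(b,b).
\]
The condition $\mu\otimes\mu\in\BB_h(\cc)$ is $f(s)=h(a,a)$. The quadratic $f(s)-h(a,a)$ has leading coefficient $A=D_1-2D_2$ and vanishes at $s=1$. By Vieta its other root is $s_0=(h(b,b)-h(a,a))/A=D_1/(D_1-2D_2)$. Since $D_1$ and $-2D_2$ are nonzero with the same sign, $s_0\in(0,1)$. Thus $\mu=(s_0,1-s_0)$ has full support, and $\mu\otimes\mu\in\BB_h(\cc)\cap\Psympos{\X}$ with $J_\k^\nu(\mu\otimes\mu)<\log 2$. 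This contradiction proves \eqref{eqn:constrained-min-interior}.

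The validity of Conjecture~\ref{conj:Gibbs-component} for $|\X|=2$ then follows from Proposition~\ref{prop:constrained-min-interior}. The case $\cc=\sref$ is excluded by the definition of the conditioning events and does not affect the argument.

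The main obstacle is that a first-order ($\varepsilon|\log\varepsilon|$) perturbation from $\delta_{(a,a)}$ along the one-dimensional constraint set, via Lemma~\ref{lemma:eloge-derivative}, can actually have positive slope: the leading coefficient is $(\k/2-1)\xi(b,b)-\xi(a,b)$, and it is positive when the admissible ratio $\xi(b,b)/\xi(a,b)$ is large. So $\delta_{(a,a)}$ may well be a local minimizer, and a purely local argument cannot work. The product-measure comparison above is the global substitute, and it depends essentially on $\nu$ being uniform. This is consistent with Example~\ref{example:boundary-global-minimizer}: there, for non-uniform $\nu$, $J^\nu_\k(\delta_{(-1,-1)})=\log(3/2)$ is smaller than the $J^\nu_\k$-value of the competing product measure.
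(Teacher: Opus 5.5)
Your proof is correct and is essentially the paper's argument. The paper's curve $\mathcal{C}=\{(s,t)\in\Delta: t=s(1-s)\}$, on which $\partial_t J^{(2)}_\kappa=0$, is exactly the set of product measures $\mu\otimes\mu$, where $J^{(2)}_\kappa=H(\mu\|\nu)=\log 2-H(\mu)$; the paper likewise shows that the constraint segment through the point mass meets $\mathcal{C}$ at an interior point with strictly smaller objective value. Your version is a bit more self-contained: you evaluate $J$ on $\mathcal{C}$ in closed form and locate the intersection via Vieta, whereas the paper first invokes the slope characterization of Lemma~\ref{lemma:two-spin-existence-boundary-locmin} and then integrates $\frac{d}{ds}J^{(2)}_\kappa(s,s(1-s))=\log\frac{s}{1-s}$.
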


\begin{remark}
    Note that the property \eqref{eqn:constrained-min-interior} that global minimizers lie in the interior  need not hold when $\cc$ lies 
    on the {\it boundary} of the interval $(\k h_{\min},\k h_{\max})$. For example, take $\X=\pr{1,-1}$ and edge potential $h$ such that $h(1,1)>h(x,z)$ for all $(x,z)\neq (1,1)$. If $\cc=\k h_{\max}=\k h(1,1)$, then $\BB_h(\cc)=\pr{\delta_{(1,1)}}$. Hence, $\delta_{(1,1)}$ is the unique minimizer, but $\delta_{(1,1)}\notin \Psympos{\X}$.
\end{remark}

As shown in Example \ref{example:boundary-local-minimizers}, 
even for the sub-class 
of models considered in Proposition \ref{prop:two-spin-local-not-global}, local minimizers need not lie in the interior. 
 In Section \ref{subsub:two-spin-local}, we obtain a characterization of   local minimizers that is then used in Section \ref{sec:two-spin-local-global-uniform} to show 
 that 
boundary local minimizers can never be  global minimizers, thus proving 
 Proposition \ref{prop:two-spin-local-not-global}.  
The arguments 
 make use of a reparameterization of the optimization problem for two-spin systems  introduced in Section \ref{subs:reparam}.
\subsubsection{Reparameterization of the edge optimization problem for two-spin models}
\label{subs:reparam}

When  $|\X| = 2$,  
the elements of $\Psym{\X}$  are symmetric probability measures on a four-dimensional space and hence, have only two degrees of freedom. 
For concreteness, 
assume  henceforth that $\X =  \{1,-1\}$, and   reparameterize 
$\Psym{\X}$ 
as  follows:

\begin{equation}
\label{eqn:pist}
  \pi=\pi[s,t]\in \Psym{\X}, \qquad (s,t) \in \Delta := \{[s',t']
 \in [0,1]^2: s' \geq t', s'+t' \leq 1  \},
 \end{equation}
 with   
\begin{equation}\label{eqn:two-spin-parametrization}
    \pi(1,1)=s-t,\quad \pi(1,-1)=\pi(-1,1)=t,\quad \pi(-1,-1)=1-s-t.
\end{equation}
Note that under this parametrization, we have $\pi^o(1)=s$, where $\pi^o$ is the marginal law of $\pi$, defined in Section \ref{sec:notation}. 
Any functional on  or subset of $\Psym{\X}$ can then be expressed as a functional on  or subset of $\Delta$.  In particular,  for the  objective function from  \eqref{eqn:J-nu-k}, we set (using the same  notation for the reparameterized version) 
\begin{align}\label{eqn:J-in-st}
    J_\k^\nu(s,t)\coloneqq J_\k^\nu(\pi[s,t]),\quad \forall (s,t)\in \Delta, 
\end{align}
which takes the explicit form   
\begin{align}
    J^\nu_\k(s,t)= \cpr{s\log\cpr{\frac{\nu(-1)}{\nu(1)}}-\log\cpr{2\nu(-1)}}+J^{(2)}_\k(s,t),\quad \forall (s,t)\in \Delta\label{eqn:Jnu-two-spin}, 
\end{align}
where $J^{(2)}_\k(s,t)\coloneqq J^{\mathsf{Unif(\X)}}_\k(\pi[s,t])$ is equal to 
\begin{align}
    J^{(2)}_\k(s,t)& = \log2 - (\k-1)\cpr{ s \log s + (1-s) \log (1-s)} \nonumber \\
& \qquad + \frac{\k}{2} \cpr{(s-t)\log (s-t) + 2t \log t + (1-s-t) \log(1-s-t)},\quad \forall (s,t)\in \Delta. \label{eqn:Jst}
\end{align}
For future purposes, define the interior of $\Delta$ by
\begin{equation}
    int(\Delta)\coloneqq \pr{(s',t')\in (0,1)^2:s'>t',s'+t'<1}\label{eqn:Delta-interior}.
\end{equation}
Then we have
\begin{align}
    \partial_sJ_\k^{(2)}(s,t)&=(1-\k)\log\cpr{\frac{s}{1-s}}+\frac{\k}{2}\log\cpr{\frac{s-t}{1-s-t}},\quad \forall (s,t)\in int(\Delta)\label{eqn:Jst-ds}\\
    \partial_tJ_\k^{(2)}(s,t)&=-\frac{\k}{2}\log\cpr{\frac{(s-t)(1-s-t)}{t^2}},\quad \forall (s,t)\in int(\Delta).\label{eqn:Jst-dt}
\end{align}

Lastly,  note that the constraint set 
 $\BB_h(\cc)$ can be 
  reparameterized as  
 \begin{equation}
 \label{eqn:Bhst}
 \{  (s,t) \in \Delta: (h(1,1)-h(-1,-1))s+h(-1,-1)-\frac{\cc}{\k}=(h(1,1)+h(-1,-1)-2h(1,-1))t\}.
\end{equation}
Therefore, if $h(1,1)+h(-1,-1)-2h(1,-1)\neq 0$, we have
\begin{equation}
    \BB_h(\cc)=\pr{(s,t)\in \Delta: t=w(h)s}, \quad w(h)\coloneqq \dfrac{h(1,1)-h(-1,-1)}{h(1,1)+h(-1,-1)-2h(1,-1)},\label{eqn:two-spin-slope}
\end{equation}
where $w(h)$ is called the \textit{slope of the constraint segment}.

\subsubsection{Characterization of boundary local minimizers for two-spin systems}
\label{subsub:two-spin-local}

\begin{lemma}[Characterization of boundary local minimizers] \label{lemma:two-spin-existence-boundary-locmin}
    Let $\X=\{1,-1\}$ and $\nu=\mathsf{Unif}(\X)$. Given an edge potential $h$ and $\cc\in \cpr{\k h_{\min},\k h_{\max}}$, 
    there exists a boundary local minimizer $\pi_*$ of the edge optimization problem $\RRR_{edge}(\cc)$ in \eqref{opt:pi} if and only if \[ h(1,1)+h(-1,-1)-2h(1,-1)\neq 0,  \]
and the slope of the constraint segment $w=w(h)$, defined in \eqref{eqn:two-spin-slope}, and the value $c$ are  such that either one of the  following properties holds:
    \begin{align}\label{eqn:two-spin-condition-for-boundary-locmin}
        \begin{dcases}
            -\frac{\k-2}{\k}<w<0 &\text{and}\quad  c=\k h(1,1);\\
            0<w< \frac{\k-2}{\k}& \text{and}\quad  c=\k h(-1,-1).
        \end{dcases}
    \end{align}
    Moreover, when \eqref{eqn:two-spin-condition-for-boundary-locmin} is satisfied, then $\pi_*$ is unique and given explicitly by
    \begin{align}
        \pi_*=\begin{dcases}
            \delta_{(1,1)}& \text{if }-\frac{\k-2}{\k}<w<0;\\
            \delta_{(-1,-1)}& \text{if }0<w< \frac{\k-2}{\k}.
        \end{dcases}
    \end{align}
\end{lemma}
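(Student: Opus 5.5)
We first use Lemma \ref{lemma:pos_marginal_cannot_be_locmin} to cut down the candidates. Any boundary local minimizer $\pi_*$ has $\pi_*^o\in\Pzero(\X)$. With $|\X|=2$ this forces $\pi_*^o=\delta_{1}$ or $\delta_{-1}$, so $\pi_*=\delta_{(1,1)}$ or $\pi_*=\delta_{(-1,-1)}$. In the coordinates \eqref{eqn:two-spin-parametrization} these are the corners $(s,t)=(1,0)$ and $(s,t)=(0,0)$ of $\Delta$. Membership in $\BB_h(\cc)$ then forces $c=\k h(1,1)$ or $c=\k h(-1,-1)$, respectively. Uniqueness is immediate once we show that these two cases cannot hold simultaneously under \eqref{eqn:two-spin-condition-for-boundary-locmin}, since the sign of $w$ distinguishes them. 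By the spin-flip symmetry $x\mapsto -x$, which preserves $\nu=\mathsf{Unif}(\X)$ and sends $w\mapsto -w$, it suffices to treat $\pi_*=\delta_{(-1,-1)}$, i.e.\ the corner $(0,0)$ with $c=\k h(-1,-1)$.

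Next we describe the constraint set near $(0,0)$ using \eqref{eqn:Bhst} with $c/\k=h(-1,-1)$. The constraint reads $(h(1,1)-h(-1,-1))s=(h(1,1)+h(-1,-1)-2h(1,-1))t$. If the coefficient of $t$ vanishes, there are two subcases. If $h(1,1)\ne h(-1,-1)$, then $s=0$ and hence $t=0$, so $\BB_h(\cc)=\{\delta_{(-1,-1)}\}$. But this contradicts Lemma \ref{lemma:existence-of-degenerate-set}, which guarantees an interior point of $\BB_h(\cc)$ because $c\in(\k h_{\min},\k h_{\max})$. If both coefficients vanish, $h$ is constant; this is excluded, since then the interval $(\k h_{\min},\k h_{\max})$ is empty. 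So $h(1,1)+h(-1,-1)-2h(1,-1)\ne0$, and the constraint segment is $t=ws$ through $(0,0)$. For this segment to contain points of $\Delta$ other than the corner, and hence interior points as required, we need $0<w<1$. The value $w=0$ gives $t=0$, which is a boundary edge. That edge is again excluded, because then $h(1,1)=h(-1,-1)$, and combined with $c=\k h(-1,-1)$ and $c\in(\k h_{\min},\k h_{\max})$ this yields a contradiction once we check signs. We should double-check this subcase carefully.

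The remaining task is to decide, for $0<w<1$, when $(0,0)$ is a local minimum of $f(s):=J^{(2)}_\k(s,ws)$ on $s\in[0,\varepsilon)$. Expanding \eqref{eqn:Jst} near $s=0$, the leading non-analytic terms are $-(\k-1)s\log s$ and $\frac{\k}{2}\bigl((1-w)s\log((1-w)s)+2ws\log(ws)\bigr)$. Their total $s\log s$ coefficient is
\[
-(\k-1)+\frac{\k}{2}(1+w)=\frac{\k w-(\k-2)}{2}.
\]
This agrees with Lemma \ref{lemma:eloge-derivative} applied in direction $\xi$ on the segment. The remaining terms are $O(s)$. Since $s\log s<0$ and is the dominant term, $f(s)-f(0)>0$ for small $s$ exactly when the coefficient is negative, i.e.\ $w<(\k-2)/\k$. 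If $w>(\k-2)/\k$, then $f$ decreases at first, so $(0,0)$ is not a local minimizer.

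The main obstacle is the critical slope $w=(\k-2)/\k$, where the $s\log s$ terms cancel. There we must compute the $O(s)$ coefficient explicitly, namely
\[
\frac{\k}{2}\bigl((1-w)\log(1-w)+2w\log w\bigr)+\text{linear terms from }\log(1-s),\ \log(1-s-t),
\]
and show it is negative, so that $(0,0)$ fails to be a local minimizer. We expect this to follow from a concavity bound on $(1-w)\log(1-w)+2w\log w$ combined with the linear contributions $\k-1-\frac{\k}{2}(1-w)$. Finally, for the converse direction, one verifies that under \eqref{eqn:two-spin-condition-for-boundary-locmin} the point $\delta_{(-1,-1)}$ lies in $\BB_h(\cc)$ and is a local minimizer by the computation above. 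This also confirms that the relatively open neighborhood in Definition \ref{def:local-minimizer} is just an initial piece of the segment.
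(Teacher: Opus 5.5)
\textbf{There is a genuine gap, at the critical slope.} Most of your route coincides with the paper's:
\begin{itemize}
\item you reduce to the corners via Lemma \ref{lemma:pos_marginal_cannot_be_locmin} and use spin-flip symmetry;
\item you obtain the segment $t=ws$ with $0<w<1$ from Lemma \ref{lemma:existence-of-degenerate-set}; this already excludes $w=0$, since an interior point needs $t>0$ and $s>t$, so no sign check is needed there;
\item you read off the $s\log s$ coefficient $\frac{\kappa w-(\kappa-2)}{2}$, which is exactly what Lemma \ref{lemma:eloge-derivative} gives.
\end{itemize}
The gap is at $w=(\kappa-2)/\kappa$, which is precisely where the strict inequality in \eqref{eqn:two-spin-condition-for-boundary-locmin} is decided. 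You do not carry out the computation there, and the expression you propose for it is wrong. Along $t=ws$, the term $\frac{\kappa}{2}(1-s-t)\log(1-s-t)$ contributes $-\frac{\kappa}{2}(1+w)s$, not $-\frac{\kappa}{2}(1-w)s$. With your version, the $O(s)$ coefficient at the critical slope would be $(\kappa-2)+\log\frac{2}{\kappa}+(\kappa-2)\log\frac{\kappa-2}{\kappa}$. For $\kappa=5$ this is about $0.55>0$, so you would conclude that $\delta_{(-1,-1)}$ is a local minimizer there, contradicting the lemma.

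\textbf{How to repair it.} With the correct term, the linear contribution is $(\kappa-1)-\frac{\kappa}{2}(1+w)=-\frac{\kappa w-(\kappa-2)}{2}$. This is exactly minus the $s\log s$ coefficient, so it vanishes at $w=(\kappa-2)/\kappa$. What remains is
\[
\frac{\kappa}{2}\bigl((1-w)\log(1-w)+2w\log w\bigr)=\log\frac{2}{\kappa}+(\kappa-2)\log\frac{\kappa-2}{\kappa}.
\]
This is strictly negative for $\kappa\ge 3$ with no concavity argument needed, since both logarithms are negative for $w\in(0,1)$. For $\kappa=2$ the critical slope is $0$, which is already excluded. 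Hence $f(s)<f(0)$ for small $s>0$, and $(0,0)$ is not a local minimizer. This is the value the paper obtains as $\lim_{s\to0^+}\frac{d}{ds}J^{(2)}_\kappa\bigl(s,\frac{\kappa-2}{\kappa}s\bigr)$ using \eqref{eqn:Jst-ds}--\eqref{eqn:Jst-dt}. Once this step is corrected, the rest of your argument goes through, including uniqueness and the converse direction.
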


\begin{proof}
Fix $\cc\in \cpr{\k h_{\min},\k h_{\max}}$.  
Then by  Lemma \ref{lemma:existence-of-degenerate-set}, 
$\BB_h(\cc)\cap \Psympos{\X}\neq \emptyset$  and by Lemma \ref{lemma:pos_marginal_cannot_be_locmin}, any boundary local minimizer $\pi_*$ of the edge optimization problem is such that $\pi_*^o\in \Pzero(\X)$. Since $\X=\pr{1,-1}$, the only candidate boundary local minimizers are $\delta_{(1,1)}$ and $\delta_{(-1,-1)}$.  
We focus on the case $\pi_* = \delta_{(-1,-1)}$, as the other case can be analyzed exactly analogously by symmetry. 
 Using the parametrization in \eqref{eqn:two-spin-parametrization}, we have $\pi_* = \pi [0,0]$. Moreover,   $(0,0)$ lies in the (reparameterized) constraint set $\BB_h(c)$ given in \eqref{eqn:Bhst} 
  if and only if $h(-1,-1) = \k c$, in which case the constraint set reduces to 
\begin{equation*}
   \{ (s,t) \in \Delta: (h(1,1)-h(-1,-1))s=(h(1,1)+h(-1,-1)-2h(1,-1))t \}. 
\end{equation*}
If $h(1,1) + h(-1,-1)  - 2h(-1,1) = 0$, then the constraint set becomes $\{s=0\} \cap \Delta = \{0,0\}$.   
 By \eqref{eqn:pist},  this corresponds to $\BB_h(c) = \{\delta_{\{-1,-1\}}\}$,  which  contradicts the 
property $\BB_h(\cc)\cap \Psympos{\X}\neq \emptyset$. 
Thus, we must have $h(1,1) + h(-1,-1)  - 2h(-1,1) \neq 0$ and in this case, 
 the  constraint set takes the form  
$\{(s,t) \in \Delta: t = ws\},$ where $w=w(h)$ due to \eqref{eqn:two-spin-slope}. The property   $\BB_h(\cc)\cap \Psympos{\X}\neq \emptyset$ then implies that  we must also have $0 <  w < 1$. 

Next apply Lemma \ref{lemma:eloge-derivative} with 
$\pi = \pi_* = \delta_{(-1,-1)}$ and the measure $\xi := \pi[1,\frac{w}{1+w}]$, 
that is, with 
 \[  \xi(1,1) = \frac{1-w}{1+w},  \qquad 
 \xi(-1,1) = \xi(1,-1) = \frac{w}{1+w}, \qquad \xi(-1,-1) = 0, \]
and $\pi_\varepsilon = (1-\varepsilon) \pi_* + \varepsilon \xi$, we have
\begin{align*}
    \lim_{\varepsilon\rightarrow 0^+}\frac{1}{|\varepsilon\log\varepsilon|}\cpr{J^{(2)}_\k(\pi_\varepsilon)-J^{(2)}_\k(\pi_*)}&=(\k-1)\sum_{x\in \X}\xi^o(x)\indf_{\pr{\pi^o_*(x)=0}}-\frac{\k}{2}\sum_{x,z\in \X}\xi(x,z)\indf_{\pr{\pi_*(x,z)=0}}\\
    &=(\k-1)\frac{1}{1+w}-\frac{\k}{2}
    \\
    &=\frac{1}{2(1+w)}\cpr{\k-2-\k w}. 
\end{align*} 
The latter quantity is strictly positive 
if and only if $-1<w<\frac{\k-2}{\k}$, and is zero if and only if $w=\frac{\k-2}{\k}$. Moreover, since $w \in (0,1),$ this implies that $\pi_* = \delta_{(-1,-1)}$ 
is a local minimizer if $0<w<\frac{\k-2}{\k}$ and $c=\k h(-1,-1)$; and is not a local minimizer if $w>\frac{\k-2}{\k}$. The case when  $w=\frac{\k-2}{\k}$ cannot be determined from the above computation, yet we can directly compute the derivative of $J^{(2)}_\k(s,\frac{\k-2}{\k}s)$ at $s=0$, that is, using  the mean value theorem, \eqref{eqn:Jst-ds} and \eqref{eqn:Jst-dt}, we have
\begin{align}
    \lim_{s\rightarrow 0^+}\frac{1}{s}&\cpr{J^{(2)}_\k\cpr{s,\frac{\k-2}{\k}s}-J^{(2)}_\k(0,0)}\nonumber\\
    &= \lim_{s\rightarrow 0^+}
    \left[ \frac{d}{ds} J_\k^{(2)} \left(s, \frac{\k-2}{\k}s \right)\right]  \\
    &= \lim_{s\rightarrow 0^+} 
    \left[ \partial_s J_\k^{(2)} \left( s, \frac{\k-2}{\k}s\right) + \frac{\k-2}{\k}\partial_t J_\k^{(2)} \left( s, \frac{\k-2}{\k}s\right) \right]
 \\
    &=\log\cpr{\frac{2}{\k}}+(\k-2)\log\cpr{\frac{\k-2}{\k}},\label{eqn:J2-crit}
\end{align}
which is strictly negative. 
This shows that when $w=\frac{\k-2}{\k}$ and $c=\k h(-1,-1)$, $\pi_*=\delta_{\cpr{-1,-1}}$ is not a local minimizer. 
Note that the derivative computed in \eqref{eqn:J2-crit} for $J^{(\nu)}_\k = J^{(2)}_\k = J^{\mathsf{Unif}(\X)}_\k$ might not be negative if $\nu\neq \mathsf{Unif}(\X)$.
In conclusion, we have shown that $\pi_* = \delta_{(-1,-1)}$ 
is a local minimizer if and only if $0<w<\frac{\k-2}{\k}$ and $c=\k h(-1,-1)$. The argument for the case $\pi_* = \delta_{(1,1)}$ is exactly analogous, and thus omitted. 
\end{proof}

\begin{remark}\label{remark:pf-non-pos-line}
    From the above proof, we see that when $\nu=\mathsf{Unif(\pr{1,-1})}$,  $\pi[0,0] = \delta_{\{-1,-1\}}$ is a local minimizer  if and only if  
    \begin{equation}
        \label{eqn:pf-non-pos-line}
    0 < w < \frac{\k -2}{\k} \quad \mbox{ and } \quad \BB_h(\cc) = \{(s,t) \in \Delta: t  = w s\}, \end{equation}
    and   an analogous argument shows that $\delta_{(1,1)}$ is a local minimizer if and only if 
    \begin{equation} 
    \label{eqn:pf-non-pos-line2}
    -\frac{\k-2}{\k}<w< 0 \quad  \mbox{ and } \quad \BB_h(\cc) = \{(s,t) \in \Delta:  t=w(s-1)
\}. \end{equation}
\end{remark}

\subsubsection{Proof of Proposition \ref{prop:two-spin-local-not-global}}\label{sec:two-spin-local-global-uniform}
The main idea behind the proof is to show that if there exists a boundary local minimizer of $J^{(2)}_\k$ in $\BB_h(\cc)$, then there must also exist an element in $\BB_h(\cc)\cap \Psympos{\X}$ that has a strictly smaller value, and hence the boundary minimizer cannot be a global minimizer.  
By Remark \ref{remark:pf-non-pos-line}, it follows that boundary local minimizers exist if and only if the slope of the constraint segment $w = w(h)$, defined in \eqref{eqn:two-spin-slope}, satisfies 
    $|w| < \frac{\kappa-2}{\kappa}, 
    w \neq 0.$
    Without loss of generality, assume that $w \in (0, \frac{\k-2}{\k})$. 
Then, using  the parameterizations for      $\pi=\pi[s,t]$, $J^{(2)}_\k$ and $\BB_h(c)$  in Section \ref{subs:reparam} and invoking   
    Lemma \ref{lemma:two-spin-existence-boundary-locmin} 
    and Remark \ref{remark:pf-non-pos-line},  it follows that  $(s,t) = (0,0)$ is a strict local minimizer of $J^{(2)}_\k(s,t)$, and 
    $ \BB_h(\cc)$ is given by \eqref{eqn:pf-non-pos-line},  which we call the constraint segment. Also, consider the curve 
    \[ {\mathcal C} := \{(s,t) \in \Delta: \partial_t J_\k^{(2)}(s,t) = 0 \} = 
    \{ (s,t) \in \Delta: t = s(1-s) \},  
    \]
    where the equality holds by \eqref{eqn:Jst-dt}. 

Then we show that the following two properties hold: 
\begin{enumerate}
    \item[(i)]
 the value of $J_\k^{(2)}(s,t)$ on the curve ${\mathcal C}$ attains strict maxima at $(0,0)$ and $(1,0)$; 
 \item [(ii)] the constraint segment  intersects 
 ${\mathcal C}$ at some point $(s^\dagger,t^\dagger)$, with $s^\dagger\notin \pr{0,1}$. 
 \end{enumerate}
 Together,  these two properties imply that 
 \begin{equation*}
    \min\pr{J_\k^{(2)}(s,t): t=ws, (s,t) \in \Delta} \leq J_\k^{(2)}(s^\dagger,t^\dagger)<J_\k^{(2)}(0,0), 
\end{equation*}
which shows that 
 $(0,0)$ is not a global minimizer.

It only remains to prove properties (i) and (ii).  We start with (i).  
Taking the derivative along the curve ${\mathcal C}$,  and using \eqref{eqn:Jst-ds} and the fact that $\partial_t J_\k^{(2)}(s,s(1-s))=0$ in the second line, we see that for all $s \in (0,1),$   
\begin{align}
    \frac{d}{ds}J_\k^{(2)}(s,s(1-s))
    =&\partial_s J_\k^{(2)}(s,s(1-s))+(1-2s)\partial_t J_\k^{(2)}(s,s(1-s))\nonumber\\
    =&(1-\k)\log\cpr{\frac{s}{1-s}}+\frac{\k}{2}\log\cpr{\frac{s-s(1-s)}{1-s-s(1-s)}}\nonumber\\
    =&\log\cpr{\frac{s}{1-s}}, \label{eqn:J-ds-s-1-s}
    \end{align}
   whose sign coincides with that of $s-\frac{1}{2}.$
This implies that the value of $J_\k^{(2)}(s,t)$ along the curve ${\mathcal C}$  attains a maximum at either $(0,0)$ or $(1,0)$. 

Next, we turn to property (ii). This is straightforward since the curve ${\mathcal C}$ is quadratic, contains the points $(0,0)$ and $(1,0)$ and has derivative equal to $1$ at $s=0$. Therefore, any constraint segment passing through $(0,0)$ with slope $0<w<(\k-2)/\k$ has to intersect with ${\mathcal C}$ again in the interior. 
\qed

\subsection{Verification  of 
Conjecture \ref{conj:Gibbs-component} for two-spin systems}
\label{sec:verification}

We now obtain Theorem \ref{thm:Gibbs-component-two-spin-uniform} as an easy consequence of the results proved thus far. 

\begin{proof}[Proof of  Theorem \ref{thm:Gibbs-component-two-spin-uniform}] 
    This follows from the Gibbs conditioning principle in Proposition \ref{prop:gibbs cond}, the simplification of the minimization problem from \eqref{eqn:Iinf-optimization} to \eqref{opt:pi} in Proposition \ref{prop:optsimp}, the fact  that (global) minimizers of \eqref{opt:pi} lie in the interior by Proposition \ref{prop:two-spin-local-not-global}, the characterization of interior local minimizers $\pi_*\in \optset_{edge}(\cc)$ as marginals of  non-degenerate $\TIS$ Gibbs measures in Proposition  \ref{prop:int_local_min_to_Gibbs}, the corresponding characterization of $\optset(\cc)$ in \eqref{eqn:opti-pi-to-opti-rho}, and the relation between $\TIS$ Gibbs measures and their edge marginals established in Lemma \ref{lemma:unimodular-extension-of-splitting-Gibbs-measure}.
\end{proof}

\section{Two-spin systems with consensus as the edge potential}\label{sec:Ising-phase-transition-proof}

In this section, we prove Theorem \ref{thm:Ising-phase-transition} by characterizing the set 
$\optset_{edge}(\cc)$ of global minimizers of the edge optimization problem 
$\RRR_{edge}(\cc)$  in 
\eqref{opt:pi} when 
$\X=\pr{1,-1}$ and  $h(x,y)=\spech(x,y)=xy$. 
We split the proof of Theorem \ref{thm:Ising-phase-transition} into two cases, the case when $\cc$ is an  extreme point of the convex hull of the range of $m$,  considered in Section \ref{subs:Isingextreme}, and the case when $\cc$ lies in the interior, considered in 
Section \ref{subs:Isinginterior}.  
Throughout we will use  the  parameterizations for two-spin systems of edge measures $\pi=\pi[s,t]$  and the objective function $J^{\nu}_\k(s,t): \Delta\rightarrow \R,$ as specified in  \eqref{eqn:pist}-\eqref{eqn:two-spin-parametrization} and \eqref{eqn:J-in-st}-\eqref{eqn:Jst},  respectively. Note that under this parametrization,
\begin{equation}
    \bbE_{\pi[s,t]}\sqpr{X_o}=2s-1,\quad \forall (s,t)\in \Delta.\label{eqn:vertex-marginal-of-pist}
\end{equation}
Also, note that the parameterized version \eqref{eqn:Bhst} of $\BB_h(\cc)$,  when $h = m$, reduces to 
    \begin{equation}
       \BB_m(\cc)=\pr{\cpr{s,t} \in \Delta: s\in \sqpr{t(c),1-t(c)},  
        t=t(c)},
        \quad 
        t(c)\coloneqq \frac{1}{4}\cpr{1-\frac{c}{\k}}.   \label{eqnising-constraint-set}
    \end{equation}

\subsection{The case when $\cc \in \{-\k, \k\}$}
\label{subs:Isingextreme}

Note that the general result of Theorem \ref{thm:Gibbs-component-two-spin-uniform} does not cover the case when 
$\cc$  is an extreme point of the range of the (generalized) consensus functional.  We now address this case in the 
two-spin setting. 

\begin{proof}[Proof of Theorem \ref{thm:Ising-phase-transition} when $c \in \{-\k, \k\}$] 
      First, fix $\cc = -\k$, and  $\pi \in  \BB_m(-\k)$.  Then by the definitions of $\BB_m$ and $m$ in \eqref{eqn:Medge} and \eqref{def-spech}, respectively, and the fact that $\pi \in \Psym{\X}$,  we have 
        $-\k=\k\bbE_{\pi}[X_oX_1]= \k(\pi(1,1) + \pi(-1,-1) - 2 \pi (1,-1)$ $=
        \k\cpr{1-4\pi(1,-1)},$ 
    which implies $\pi(1,-1)=\pi(-1,1)=1/2$,  which then implies that $\pi(1,1)=\pi(-1,-1)=0$. Therefore, $\BB_m(\cc)$ contains just the one element,         $\pi^{\texttt{alt}}\coloneqq\frac{1}{2}\delta_{(-1,1)}+\frac{1}{2}\delta_{(1,-1)}$. Hence, the same is true of ${\mathcal M}_{edge}(c)$, 
    and  Proposition \ref{prop:optsimp} implies that $\optset(\cc)=\{\UGW_1(\mu^{(\pi^{\texttt{alt}})})\}$. By \eqref{eqn:1MRF}, we have 
    \begin{equation*}
        \mu^{(\pi^{\texttt{alt}})}=\frac{1}{2}\delta_{T^{\k,\pm}_1}+\frac{1}{2}\delta_{T^{\k,\mp}_1},
    \end{equation*}
    where $T^{\k,\pm}$ and $T^{\k,\mp}$ are the infinite $\k$-regular tree with alternating vertex marks (defined prior to Definition \ref{def:freezing-ising}), 
    and the subscript $1$ denotes their depth-$1$ subtree (defined in Section \ref{subsubsec:topology}).
    To identify its unimodular extension, note that the conditional measure on stars $(\widehat{P}_{\mu^{(\pi^{\texttt{alt}}}}(x, x'))_{x,x'\in \X}$ from \eqref{eqn:PU} 
    are given by 
    \begin{align*}
        \widehat{P}_{\mu^{(\pi^{\texttt{alt}})}}(1, -1)=\delta_{T^{\k-1,\pm}_1} \quad \text{ and } \quad 
        \widehat{P}_{\mu^{(\pi^{\texttt{alt}})}}(-1, 1)=\delta_{T^{\k-1,\mp}_1},
    \end{align*}
    and so, recalling the definition of $\ising_\k^{\sharp}(-\infty)$ in \eqref{eqn:freezing-anti}, we have  
    \begin{equation*}
\UGW_1(\mu^{(\pi^{\texttt{alt}})})=\frac{1}{2}\delta_{T^{\k,\pm}}+\frac{1}{2}\delta_{T^{\k,\mp}}=\ising_\k^\sharp(-\infty). 
    \end{equation*}
    
    Proposition \ref{prop:gibbs cond} then  implies that  ${U_n}$ given $\Conset_n(\cc)$ converges to $\ising_\k^{\sharp}(-\infty)$.

    Next, suppose $\cc=\k$. Then  $t(\cc) = 0$ by \eqref{eqnising-constraint-set}, and using \eqref{eqn:Jnu-two-spin} and \eqref{eqn:Jst}, it follows that 
    \begin{align*}
        J^\nu_\k(s,0)=\pr{-s\log\nu(1)-(1-s)\log\nu(-1)+\cpr{\frac{\k}{2}-1}H([s,1-s])}.
    \end{align*}
    Using the concavity of the map $s\mapsto H([s,1-s])=-s\log s-(1-s)\log(1-s)$,  it is easy to verify that 
    $[0,1] \ni s \to J^\nu_\k(s,0)$
     has two local minima, achieved at $s=0$ and $s=1$.

          We now consider three sub-cases. If $\nu(1)>\nu(-1)$, then the global minimum is attained at $s=1$, in which case   $\optset_{edge}(\cc)=\{\delta_{(1,1)}\}$ and hence, again by \eqref{eqn:opti-pi-to-opti-rho}, 
     $\optset(\cc)=\{\UGW_1(\mu^{(\delta_{(1,1)})})\}$. Then, arguing as above, by \eqref{eqn:1MRF} we have  $\mu^{(\delta_{(1,1)})}=\delta_{T^{\k,+}_1},$
    where $T^{\k,+}_1$ is the depth $1$-subtree of the infinite $\k$-regular tree with all vertex marks being $1$, 
    and hence, by Definition \ref{defn:UniExt},  $\UGW_1(\mu^{(\delta_{(1,1)})})=\delta_{T^{\k,+}}=\ising_\k^+(\infty),$
    where $\ising_\k^{+}(\infty)$ is as specified in \eqref{eqn:freezing-ferro}. 
     Next, if $\nu(-1)>\nu(1)$, then one can analogously argue that $\optset(\cc)=\{\delta_{T^{\k,-}}=\ising_\k^{-}(\infty)\}$, where $\Phi_\k^-(\infty)$ is as specified in \eqref{eqn:freezing-ferro}. 
          Lastly, in the uniform case when  $\nu(1)=\nu(-1)$,  clearly $J^\nu_\k(1,0) = J^\nu_\k(0,0)$ and so $\optset_{edge}(\cc)=\{\delta_{(1,1)},\delta_{(-1,-1)}\}$. Thus by \eqref{eqn:opti-pi-to-opti-rho} and the facts (argued above) that $ \UGW_1(\delta_{(1,1)})=\ising_k^+(\infty)$ and $\UGW_1(\delta_{(-1,-1)})=\ising_k^-(\infty)$, it follows that $\optset(\cc)=\{\ising_\k^+(\infty),\ising_\k^-(\infty)\}$.  Together with Proposition \ref{prop:gibbs cond}, this completes the proof of Theorem \ref{thm:Ising-phase-transition} in the case when $|c|=\k.$ 
\end{proof}

\subsection{The case when $\cc \in (-\k, \k)$}
\label{subs:Isinginterior}

 Before proceeding with the proof in this setting, we 
 numerically plot the objective function in 
 Figure \ref{fig:two-spin-phase-transition-2x2} to gauge the behavior of global minimizers. First, when $\nu\neq \mathsf{Unif}(\X)$, the plot of the maps $s\mapsto J^\nu_\k(s,t(c))$ for three different values of $\cc$ in Figure \ref{fig:nonuniform-section}  suggest that there is always a unique (global) minimizer even though $s \mapsto J^\nu_\k(s,t(c))$ might not be convex in $s.$ Next, the corresponding plot when $\nu=\mathsf{Unif}(\X)$ illustrated in Figure \ref{fig:uniform-section} suggests that there should be a phase transition for  all $\cc$ sufficiently large, and there are exactly two global minimizers after the phase transition. 

\begin{figure}[htbp]
  \centering

  \begin{subfigure}{0.46\textwidth}
    \centering
    \includegraphics[width=\linewidth,height=5.5cm]{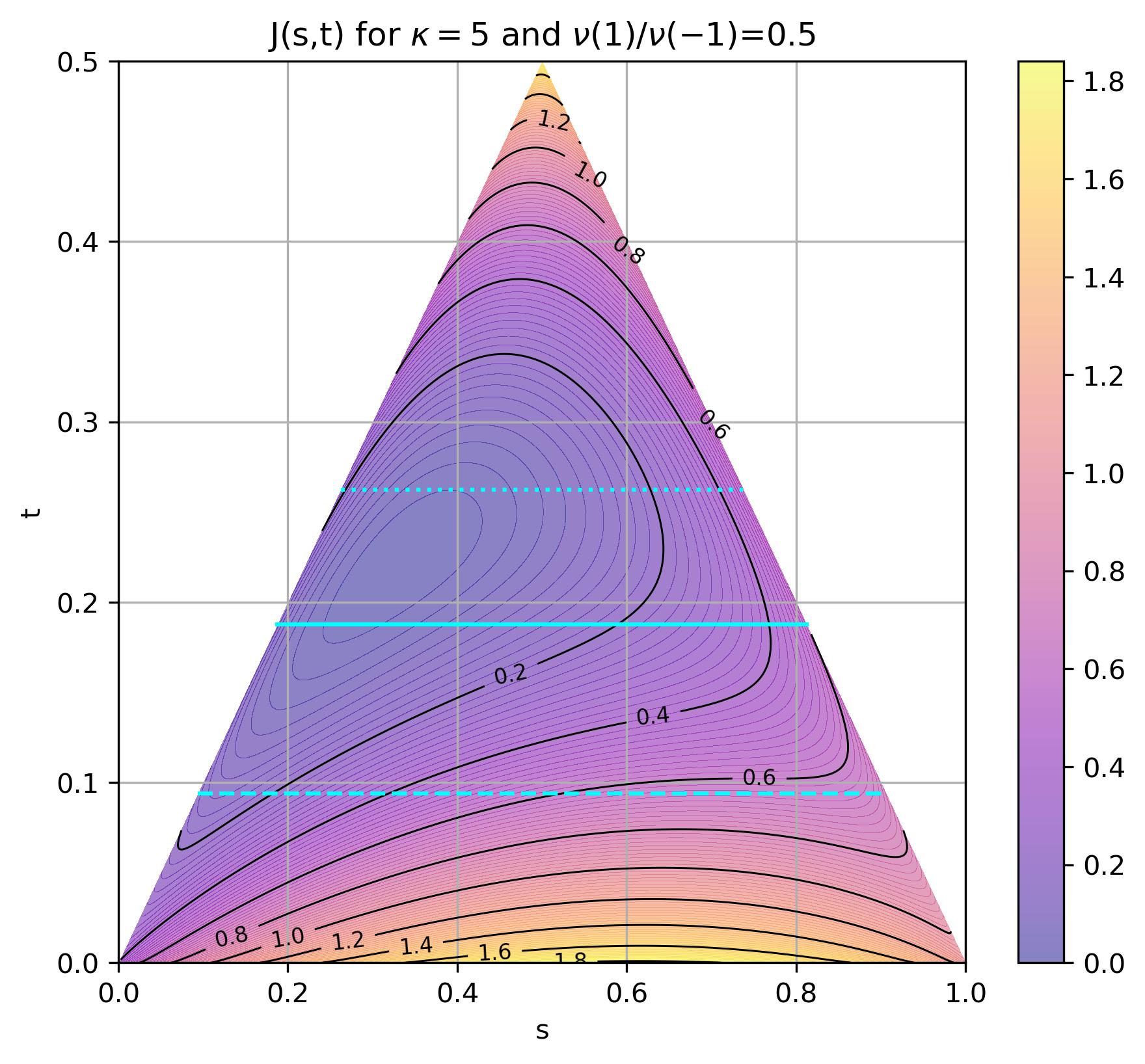}
    \caption{Contour plot for $J^\nu_\k(s,t)$, $p<1/2$.}
    \label{fig:nonuniform-heatmap}
  \end{subfigure}
  \hspace{0.5cm}
    \begin{subfigure}{0.41\textwidth}
    \centering
    \includegraphics[width=\linewidth,height=5.5cm]{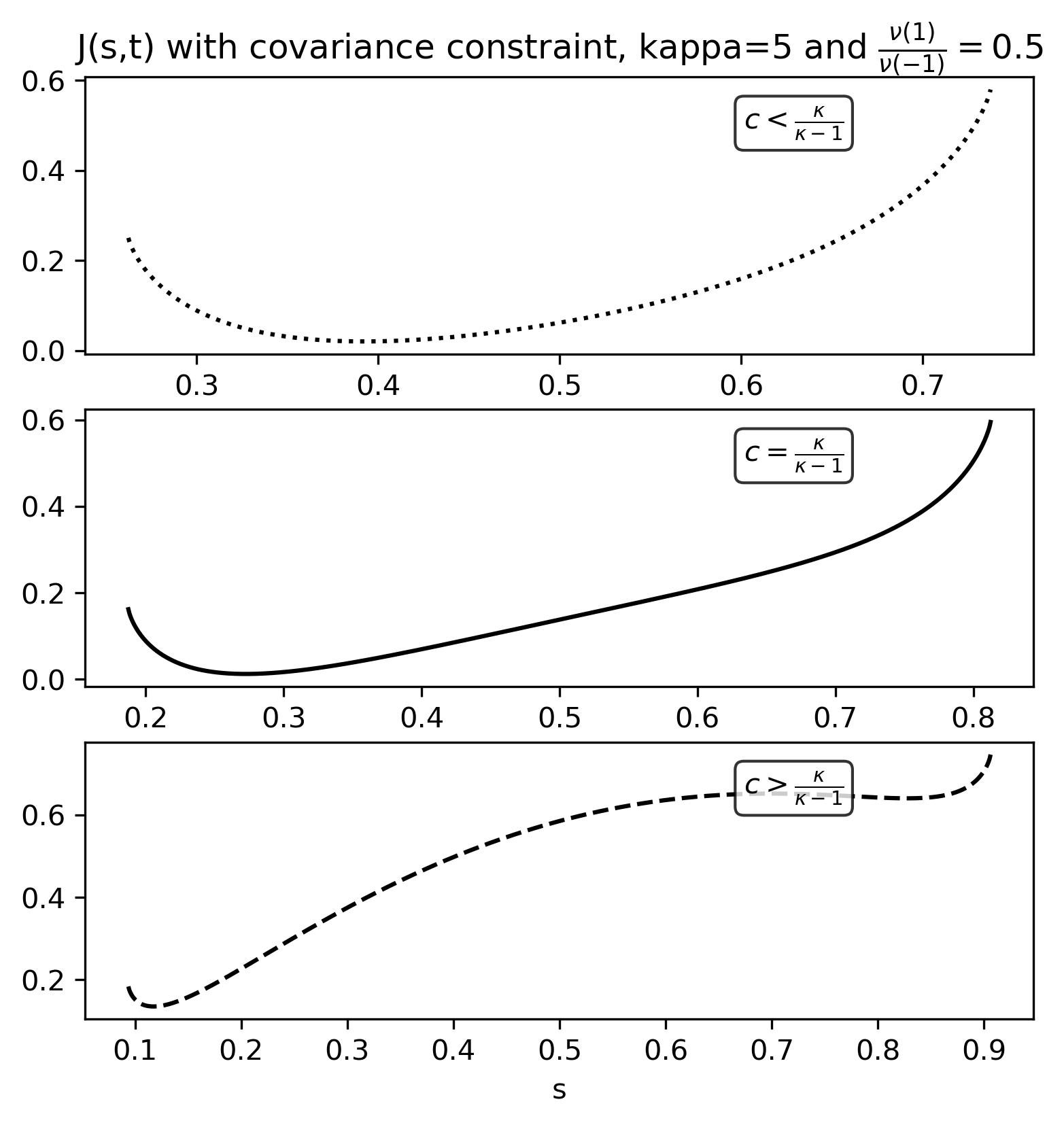}
    \caption{$s\mapsto J^{\nu}_\k(s,t(c))$, $p<1/2$}
    \label{fig:nonuniform-section}
  \end{subfigure}

  \begin{subfigure}{0.46\textwidth}
    \centering
    \includegraphics[width=\linewidth,height=5.5cm]{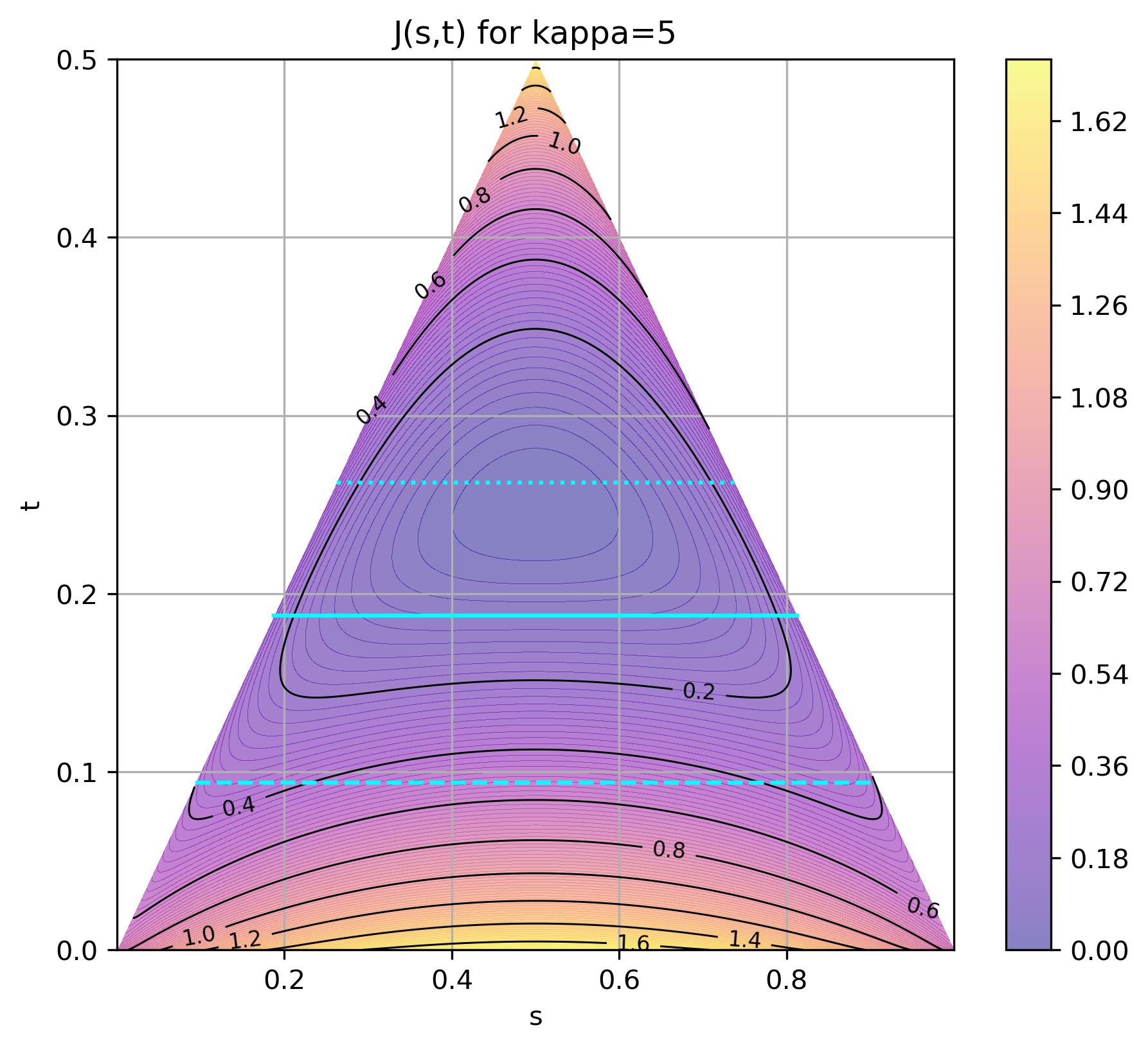}
    \caption{Contour plot for $J^{(2)}_\k(s,t)$}
    \label{fig:uniform-heatmap}
  \end{subfigure}
  \hspace{0.5cm}
  \begin{subfigure}{0.41\textwidth}
    \centering
    \includegraphics[width=\linewidth,height=5.5cm]{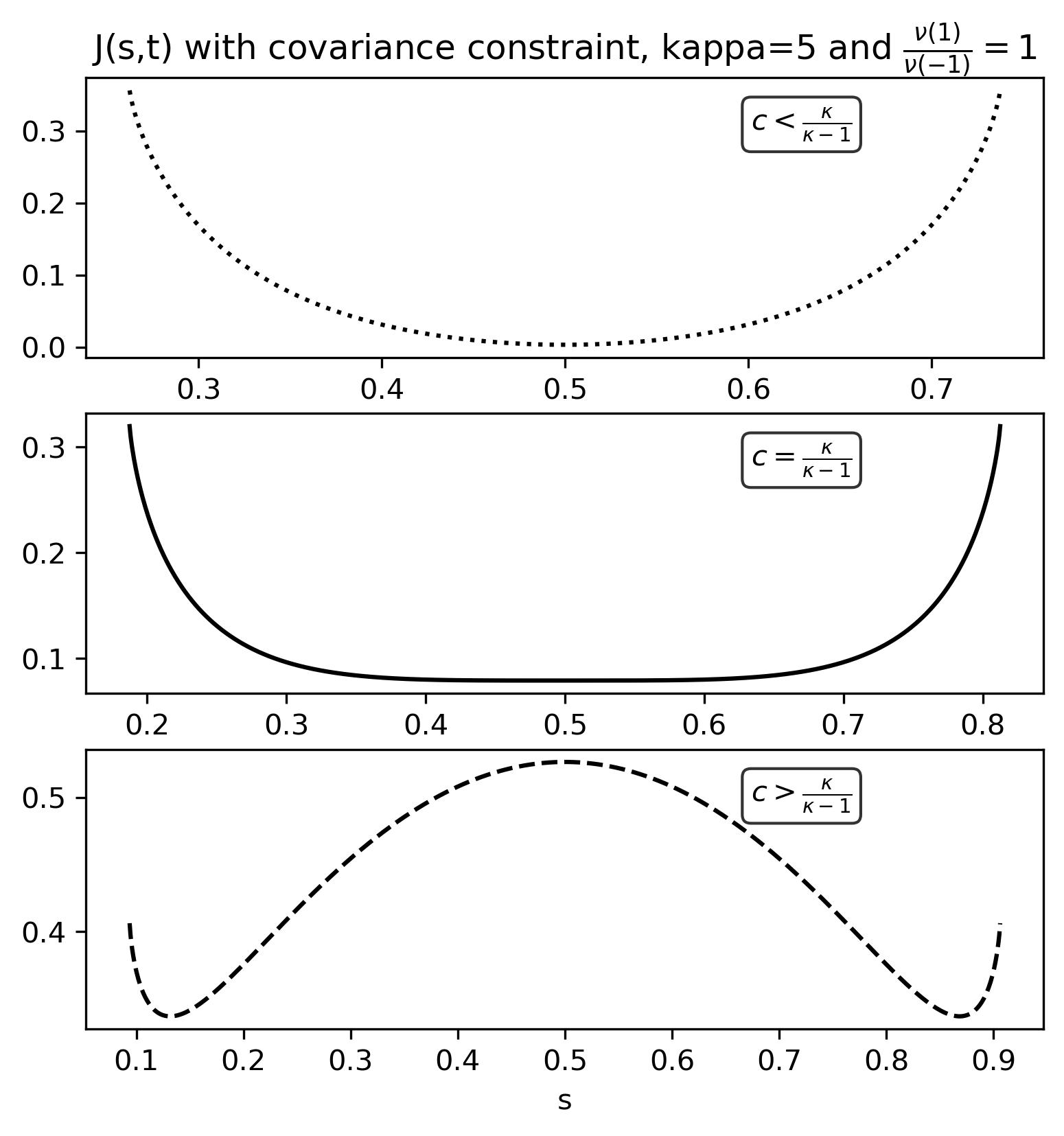}
    \caption{$s\mapsto J^{(2)}_\k(s,t(c))$}
    \label{fig:uniform-section}
  \end{subfigure}

  \caption{Two-spin with consensus functional for $\k=5$. Top: non-uniform mark distribution $\nu$ (no phase transition). Bottom: uniform mark distribution $\nu$ (phase transition).}
  \label{fig:two-spin-phase-transition-2x2}
\end{figure}

From Theorem \ref{thm:Gibbs-deg-and-nondeg}, we see that the conditional limits are $\TIS$ Ising measures (possibly degenerate), but, according to Lemma \ref{lem:cavitymap} and Definition \ref{def:Ising-on-tree}, there could be multiple $\TIS$ Ising measures in certain regimes. The next lemma is then used to identify the exact $\TIS$ Ising measures in each case.

 \begin{lemma}[Root magnetization and consensus under certain Ising measures]\label{lemma:root-mag-consensus-for-ising}
 Let $\ising$ be a $\TIS$ Ising measure $\ising$ on $T^\k$ with temperature parameter $\beta$, external field $B$ and boundary law $\ell\in \P\cpr{\pr{1,-1}}$ corresponding to a fixed point $\theta\in \R$ of the Ising cavity map, as specified in Remark \ref{remark:BP-and-ising-cavity} below. According to Remark \ref{rmk:external-field-p}, let $\nu\in \P(\{1,-1\})$ be such that $\nu(1)=p$ with $B=\log(p/(1-p))$ and define $\smref$ with respect to $\nu$ in each cases. 
 Then the following statements hold:  
 \begin{enumerate}
     \item In the ferromagnetic regime $(\beta>0)$:
     \begin{enumerate}
         \item If $B>0$, then $\ising^+_\k(\beta,B)$ is the unique $\TIS$ Ising measure that has a positive root magnetization. Moreover, the consensus under $\ising^+_\k(\beta,B)$ is greater than typical, that is, 
         \begin{equation}
             \bbE_{\ising^+_\k(\beta,B)}\sqpr{S_m}>\smref.\label{eqn:ferro-plus>sref}
         \end{equation}
         \item If $B<0$, then $\ising^-_\k(\beta,B)$ is the unique $\TIS$ Ising measure that has negative root magnetization. Moreover, the consensus under $\ising^-_\k(\beta,B)$ is greater than typical,  that is, 
         \begin{equation}
             \bbE_{\ising^-_\k(\beta,B)}\sqpr{S_m}>\smref.\label{eqn:ferro-minus>sref}
         \end{equation}
         \item If $B=0$, then in the uniqueness regime $\beta\leq \betacrit(\k,0)$, we have
         \begin{align}
             \bbE_{\ising_\k^*(\beta,0)}[\pmb\sigma_o]=0\quad \text{and}\quad\bbE_{\ising_\k^*(\beta,0)}[S_m]=\k\tanh(\beta)>\smref=0\label{eqn:ferro-root-mag-consensus-unique-no-external}.
         \end{align}
         In the non-uniqueness regime $\beta> \betacrit(\k,0)$, we have
         \begin{align}
             \bbE_{\ising_\k^+(\beta,0)}[\pmb\sigma_o]>\bbE_{\ising_\k^\sharp(\beta,0)}[\pmb\sigma_o]=0>\bbE_{\ising_\k^-(\beta,0)}[\pmb\sigma_o]\label{eqn:ferro-root-mag-nonunique-no-external}
         \end{align}
         and
    \begin{align}
    \bbE_{\ising_\k^+(\beta,0)}[S_m]=\bbE_{\ising_\k^-(\beta,0)}[S_m]>\bbE_{\ising_\k^\sharp(\beta,0)}[S_m]=\k\tanh(\beta)>\smref=0\label{eqn:ferro-consensus-nonunique-no-external}.
    \end{align}
     \end{enumerate}
     \item In the antiferromagnetic regime ($\beta<0$):
     \begin{enumerate}
         \item If $B>0$, then
         \begin{equation}
             \bbE_{\ising_\k^\sharp(\beta,B)}[\pmb\sigma_o]>0\quad \text{and}\quad \bbE_{\ising_\k^\sharp(\beta,B)}[S_m]<\smref.\label{eqn:antiferro-root-mag-consensus-plus}
         \end{equation}
         \item If $B<0$, then
         \begin{equation}
             \bbE_{\ising_\k^\sharp(\beta,B)}[\pmb\sigma_o]<0\quad \text{and}\quad \bbE_{\ising_\k^\sharp(\beta,B)}[S_m]<\smref.\label{eqn:antiferro-root-mag-consensus-minus}
         \end{equation}
         \item If $B=0$, then
         \begin{equation}
             \bbE_{\ising_\k^\sharp(\beta,0)}[\pmb\sigma_o]=0\quad \text{and}\quad \bbE_{\ising_\k^\sharp(\beta,0)}[S_m]=\k\tanh(\beta)<\smref=0.\label{eqn:antiferro-root-mag-consensus-no-external}
         \end{equation}
     \end{enumerate}
 \end{enumerate}
     
 \end{lemma}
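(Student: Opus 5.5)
The plan is to reduce every claim to explicit two-spin formulas in terms of the fixed point $\theta$ of the Ising cavity map $\Gamma(\k,\beta,B)$.

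\emph{Parametrization.} By Lemma \ref{lemma:TISGM-to-cavity} with $\psi=e^{\beta xz}$ and $\bar\psi=\nu\propto e^{Bx/2}$, the boundary law satisfies $\ell(x)\propto e^{\theta x}$ up to the $\nu$-factor. Here $\theta$ is the cavity field of a vertex with one neighbour removed. I will check that this is consistent with $\BP\ell=\ell$ being equivalent to $\Gamma(\theta)=\theta$; this is the content of Remark \ref{remark:BP-and-ising-cavity}. By \eqref{eqn:edge-marginal-cavity} the edge marginal is
\begin{equation*}
\pi(x,z)\propto \exp\big(\beta xz+\theta x+\theta z\big).
\end{equation*}
Set $t=\tanh\beta$, $a=\tanh\theta$ and $g(\theta)=\tanh^{-1}(t\,a)$. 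The root field is $\theta+g(\theta)=B+\k g(\theta)$, so
\begin{equation*}
\E[\pmb\sigma_o]=\tanh\big(\theta+g(\theta)\big), \qquad \E[S_m]=\k f(t,a), \qquad f(t,a)=\frac{t+a^2}{1+t a^2}.
\end{equation*}
The typical consensus is $\smref=\k(2p-1)^2=\k\tanh^2(B/2)$. The proof consists of comparing these closed forms.

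\emph{Magnetization.} For $\beta>0$, $g(\theta)$ has the sign of $\theta$. Hence the root magnetization has the sign of $\theta$. Lemma \ref{lem:cavitymap} then gives the uniqueness statements in (1a) and (1b): for $B>0$, $\theta^+$ is the only positive fixed point, and symmetrically for $B<0$. It also gives the ordering \eqref{eqn:ferro-root-mag-nonunique-no-external}. For $B=0$, $\theta^\sharp=\theta^*=0$ gives zero magnetization and $f(t,0)=\tanh\beta$. For $\beta<0$, $\sgn(\theta^\sharp)=\sgn(B)$ by Lemma \ref{lem:cavitymap}. The fixed point equation also gives $|\theta^\sharp|<|B|$, since $(\k-1)g$ has the opposite sign. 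To fix the sign of the root field $B+\k g(\theta)$, I will use $B+\k g=\theta+g=\theta+g(\theta)$ together with $|g(\theta)|<|\theta|$, which holds because $|t|<1$. The root field therefore has the sign of $\theta$, and hence of $B$.

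\emph{Consensus.} A direct computation gives
\begin{equation*}
f(t,a)-a^2=\frac{t(1-a^4)}{1+ta^2},
\end{equation*}
so $f-a^2$ has the sign of $\beta$. Moreover $\partial_{a^2}f=(1-t^2)/(1+ta^2)^2>0$. In the ferromagnetic case with $B\neq 0$, we have $\theta^\pm=B+(\k-1)g(\theta^\pm)$ with $g$ of the same sign as $B$. Thus $|\theta^\pm|\geq|B|>|B|/2$, so $a^2>\tanh^2(B/2)$ and $f>a^2>\tanh^2(B/2)$, which proves \eqref{eqn:ferro-plus>sref} and \eqref{eqn:ferro-minus>sref}. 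For $B=0$ in the non-uniqueness regime, $\theta^\pm=\pm\theta_0\neq0$ by spin-flip symmetry. Monotonicity of $f$ in $a^2$ then gives \eqref{eqn:ferro-consensus-nonunique-no-external}. The antiferromagnetic case $B=0$ is immediate, since $\theta^\sharp=0$ and $f=t<0$.

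\emph{Main obstacle: (2a) and (2b) with $B\neq0$.} Here $f<a^2$, but $|\theta^\sharp|$ may exceed $|B|/2$. My first attempt is to eliminate $\theta$ using the fixed point equation. Write $\theta=\tfrac B2+\tfrac{\k-1}{2}\,\cdot$ and compare directly with the product measure $\pi\propto e^{(B/2)(x+z)}$. Equivalently, I will show that $\Phi(\beta):=f(\tanh\beta,\tanh\theta^\sharp(\beta))$ is strictly increasing on $(-\infty,0]$, with $\Phi(0)=\tanh^2(B/2)$. The value at $\beta=0$ holds because $\theta^\sharp(0)=B$ is the cavity field and the edge fields become $B/2$ per endpoint once the $\nu$-factor is split. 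The derivative $\Phi'(\beta)$ is computed with $d\theta^\sharp/d\beta$ obtained by implicit differentiation of $\Gamma$. Since $\Gamma'<0$ for $\beta<0$, the denominator $1-\Gamma'(\theta)$ is positive, and I expect $\Phi'>0$ to reduce to an elementary inequality.

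If that computation is unwieldy, I will fall back on an interpolation argument. By Proposition \ref{prop:int_local_min_to_Gibbs}, an edge measure coming from $\ising_\k^\sharp(\beta,B)$ is a critical point of $J^\nu_\k-\beta\k\langle m,\cdot\rangle$. With $\beta<0$, moving $\beta$ toward $0$ trades off consensus against $J^\nu_\k\geq0$, whose unique zero is $\nu\otimes\nu$ by \eqref{eqn:unique-minimizer-of-J}. A first-order comparison of the two critical points would then force $\langle m,\pi\rangle<\langle m,\nu\otimes\nu\rangle$.
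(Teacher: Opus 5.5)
\textbf{There is a genuine gap in (2a)/(2b).} Your ferromagnetic and magnetization arguments are sound. They follow essentially the paper's route: closed forms in the fixed point $\theta$, the sign of $f-a^2$ equal to the sign of $\beta$, and a comparison of $|\theta|$ with the field through the fixed-point equation.

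The gap comes from an inconsistent normalization in your setup. You take $\bar\psi=\nu\propto e^{Bx/2}$, but then use $\theta=B+(\kappa-1)g(\theta)$. The consistency check you deferred fails.
\begin{itemize}
\item With $\psi(x,z)=e^{\beta xz}$, $\bar\psi=\nu$ and $\ell(x)\propto e^{\theta x}$, the boundary-law equation reads $e^{2\theta}=\frac{p}{1-p}\left(\frac{\cosh(\theta+\beta)}{\cosh(\theta-\beta)}\right)^{\kappa-1}$.
\item Since $\frac12\log\frac{\cosh(\theta+\beta)}{\cosh(\theta-\beta)}=g(\theta)$, this is $\theta=\frac B2+(\kappa-1)g(\theta)$, not $\theta=B+(\kappa-1)g(\theta)$.
\item With the formulas you actually wrote, $\theta^\sharp(0)=B$, so $\Phi(0)=f(0,\tanh B)=\tanh^2 B$, not $\tanh^2(B/2)$. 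Your sentence about ``edge fields $B/2$ per endpoint'' contradicts your own $\pi\propto e^{\beta xz+\theta x+\theta z}$.
\item Worse, in that normalization the inequality you want is false. We have $|\theta^\sharp(\beta)-B|=(\kappa-1)|g(\theta^\sharp)|\le(\kappa-1)|\beta|$. So as $\beta\uparrow 0$ the consensus tends to $\kappa\tanh^2 B>\kappa\tanh^2(B/2)=\smref$.
\end{itemize}
No sign computation for $\Phi'$ can close this, and neither can the loosely sketched interpolation fallback.

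\textbf{The fix.} Use $\theta=\frac B2+(\kappa-1)g(\theta)$ throughout, and the ``main obstacle'' disappears.
\begin{itemize}
\item The root field becomes $\frac B2+\kappa g(\theta)=\theta+g(\theta)$, so your magnetization argument is unchanged.
\item In the ferromagnetic case you get $|\theta^\pm|>|B|/2$ directly.
\item For $\beta<0$, $\theta^\sharp$ has the sign of $B$ while $(\kappa-1)g(\theta^\sharp)$ has the opposite sign, so $0<|\theta^\sharp|<|B|/2$.
\item Your identity $f-a^2=t(1-a^4)/(1+ta^2)<0$ and monotonicity of $\tanh^2$ on $[0,\infty)$ then give $\mathbb{E}[S_m]<\kappa\tanh^2\theta^\sharp<\kappa\tanh^2(B/2)=\smref$.
\end{itemize}
This is exactly parallel to the ferromagnetic case. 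It is also precisely the paper's proof, which writes $|\theta^\sharp|<|B|$ and $\smref=\kappa\tanh^2 B$. In other words, the paper works with the field for which $\nu(x)\propto e^{Bx}$. Matching $\smref=\kappa(2p-1)^2$ requires $\tanh B=2p-1$, i.e.\ $B=\frac12\log\frac{p}{1-p}$. The stated identification $B=\log(p/(1-p))$ is off by that factor $\frac12$, which is presumably what led you to mix the two conventions.
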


The proof of the lemma, given below, will rely on the following two observations. 

\begin{remark}[Relation between the Ising cavity map $\Gamma$ and $\BP$ for Ising measure]\label{remark:BP-and-ising-cavity}
   Fix $\k\geq 2.$ The probability mass function $\ell\in \P(\pr{1,-1})$ is a fixed point of the map $\BP$ in \eqref{eqn:cavity} with $\psi=\exp\{\beta m\}$ and $\bar\psi=\nu$ if and only if $\ell(x)= e^{\exth\cdot \sgn (x)}/(e^\exth+e^{-\exth})$ for $x\in \pr{1,-1}$, where $\exth \in \R$ is a fixed point of the Ising cavity map $\Gamma=\Gamma(\k,\beta,B(p))$, defined in \eqref{eqn:Ising-cavity-map}, where $B(p)=\log(p/(1-p))$ is the external field corresponding to $\nu$ with $\nu(1)=p$ according to Remark \ref{rmk:external-field-p}.
\end{remark}

\begin{remark}[Root magnetization and consensus of $\TIS$ Ising measures]\label{remark:magnetization-and-consensus}
Fix $\k\geq 2$, $\beta\in \R$ and $B\in \R$. Given a $\TIS$ Ising measure $\ising$ on $T^\k$ with temperature parameter $\beta$, external field $B$ and boundary law $\ell\in \P(\pr{1,-1})$, using the characterization of the edge marginal given in \eqref{eqn:edge-marginal-cavity}, together with the correspondence between $\ell\in \P(\pr{1,-1})$ and $\exth\in \R$ as specified in Remark \ref{remark:BP-and-ising-cavity}, we obtain the following forms for root magnetization and consensus of $\ising$:
\begin{align}
    \bbE_{\ising}[\pmb \sigma_o]&=\cpr{e^{-2\beta}+\cosh(2\exth)}^{-1}\sinh(2\exth)\label{eqn:ising-magnetization}\\
    \bbE_{\ising}[S_m]&=\k\cdot\cpr{ \dfrac{e^{2\beta}\cosh(2\exth)-1}{e^{2\beta}\cosh(2\exth)+1}}=\k\cdot \cpr{\dfrac{2\sinh^2(\exth)+(1-e^{-2\beta})}{2\cosh^2(\exth)-(1-e^{-2\beta})}}\label{eqn:ising-covariance}.
 \end{align}
\end{remark}

 \begin{proof}[Proof of Lemma \ref{lemma:root-mag-consensus-for-ising}] 
{\em (1a)} From \eqref{eqn:ising-magnetization}, we see that $\sgn\cpr{\bbE_\ising[\pmb\sigma_o]}=\sgn(\exth)$. Together with Lemma \ref{lem:cavitymap} (1), this proves the uniqueness. Next, we show \eqref{eqn:ferro-plus>sref}. Note that in the ferromagnetic regime, since $1-e^{-2\beta}>0$, by \eqref{eqn:ising-covariance} we have
\begin{align}
    \bbE_{\ising}[S_m]>\k \tanh^2(\exth).\label{eqn:ferro-consensus-greater}
\end{align}
Moreover, since $B>0$, the unique positive fixed point $\exth^+$ of the Ising cavity map $\Gamma$ is such that
\begin{align*}
    \exth^+=B+(\k-1)\tanh^{-1}\cpr{\tanh(\beta)\tanh(\exth^+)}>B,
\end{align*}
together with \eqref{eqn:ferro-consensus-greater} and the monotonicity of $\tanh$, we have
\begin{align*}
    \bbE_{\ising_\k^+(\beta,B)}[S_m]>\k \tanh^2(\exth^+)>\k\tanh^2(B)=\smref.
\end{align*}

{\em (1b)} This follows analogously using Lemma \ref{lem:cavitymap} (2), \eqref{eqn:ferro-consensus-greater} and the monotonicity argument by replacing $\exth^+$ with $\exth^-$.

{\em (1c)} From \eqref{eqn:ising-magnetization} and Lemma \ref{lem:cavitymap} (3), we see that $\exth=0$ and $\bbE_{\ising_\k^*(\beta,0)}[\pmb\sigma_o]=0$ in the uniqueness regime. Moreover, since $\exth=0$ and $\beta>0$, \eqref{eqn:ising-covariance} implies that $\bbE_{\ising_\k^*(\beta,0)}[S_m]=\k\tanh(\beta)>0=\smref$. In the non-uniqueness regime, \eqref{eqn:ising-magnetization} and Lemma \ref{lem:cavitymap} (3) imply \eqref{eqn:ferro-root-mag-nonunique-no-external}. Moreover, since the Ising cavity map with $B=0$ is an odd function, we see that $|\exth^+|=|\exth^-|>0=|\exth^\sharp|$. Applying \eqref{eqn:ising-covariance} and the fact that $\cosh$ is an even function that is strictly increasing in $[0,\infty)$, we obtain \eqref{eqn:ferro-consensus-nonunique-no-external}.

{\em (2a)} By Lemma \ref{lem:cavitymap}, in the antiferromagnetic regime, since $B>0$, there is always a unique fixed point $\exth^\sharp$ and $\sgn(\exth^\sharp)=\sgn(B)>0$. Together with \eqref{eqn:ising-magnetization}, we conclude that $\bbE_{\ising_\k^\sharp(\beta,B)}[\pmb\sigma_o]>0$. Next, note that since $\exth^\sharp>0$ and $B>0$,
\begin{equation*}
    \exth^\sharp=B+(\k-1)\tanh^{-1}\cpr{\tanh(\beta)\tanh(\exth^\sharp)}<B,
\end{equation*}
thus $|\exth^\sharp|<|B|$. Together with \eqref{eqn:ising-covariance}, the fact that $1-e^{-2\beta}<0$ and the fact that $[0,\infty)\ni x\mapsto \tanh^2(x)$ is an increasing function, we have
\begin{align}
    \bbE_{\ising_\k^\sharp(\beta,B)}[S_m]<\k \tanh^2(\exth^\sharp)<\k\tanh^2(B)=\smref.\nonumber
\end{align}

{\em (2b)} This follows analogously but this time $\sgn(\exth^\sharp)=\sgn(B)<0$ so $\bbE_{\ising_\k^\sharp(\beta,B)}[\pmb\sigma_o]<0$. Moreover, $0>\exth^\sharp>B$ so again $|\exth^\sharp|<|B|$. Therefore, the consensus part of (2b) follows accordingly.

{\em (2c)} If $B=0$, then by the antiferromagnetic part of Lemma \ref{lem:cavitymap}, the unique fixed point is $\exth^\sharp=0$, hence \eqref{eqn:ising-magnetization} implies that $\bbE_{\ising_\k^\sharp(\beta,B)}[\pmb\sigma_o]=0$ and \eqref{eqn:ising-covariance} implies that 
\begin{equation*}
    \bbE_{\ising_\k^\sharp(\beta,0)}[S_m]=\k \tanh(\beta)<0=\smref.
\end{equation*}
 \end{proof}

We now turn to the setting where $c$ lies in the interior of the convex hull of the range of $S_m$.

 \begin{proof}[Proof of Theorem \ref{thm:Ising-phase-transition} when $c \in (-\k, \k)$]  We consider two cases based on whether $\nu$ is uniform or not, and then divide further into the  cases when $s \mapsto J^\nu_\k(s, t(c))$  is convex or not.  In each  case, the proof  will proceed via two major steps: we first solve for the (global) minimizer(s) of the edge optimization problem explicitly using the parameterization for two-spin systems,  then we identify the Ising measure(s) corresponding to the minimizer(s). Before we start, note that $\cc\in (-\k,\k)$ implies 
 \begin{equation}
     t(c)=\frac{1}{4}\cpr{1-\frac{\cc}{\k}}\in \cpr{0,\frac{1}{2}}. \label{eqn:tc-in-interior}
 \end{equation}
 {\em Case 1:}  Suppose  $\nu=\mathsf{Unif}(\X)$.  We start by establishing properties of the objective function $J_\k^{(2)}$. 
 First, note that by the definition  \eqref{eqn:Jst} of $J_\k^{(2)}$, it is easy to see that
 \begin{equation}
     s\mapsto J^{(2)}_\k(s,t(c))\text{ is continuous at } s=t(c)\text{ and }s=1-t(c)\label{eqn:J2-conti-at-boundary}
 \end{equation}
 and
 \begin{equation}
     s\mapsto J^{(2)}_\k(s,t(c)) \text{ is smooth  in } (t(c),1-t(c)).\label{eqn:J2-smooth}
 \end{equation}
 In addition, 
 \begin{equation}
     J^{(2)}_\k(s,t)=J^{(2)}_\k(1-s,t),\quad \forall (s,t)\in \Delta. \label{eqn:J2s=J21-s}
 \end{equation}
 Next, \eqref{eqn:Jst-ds} implies that
 \begin{equation}
     \partial_s J_\k^{(2)}(\frac{1}{2},t(c))=0\label{eqn:1/2-crit-pt}
 \end{equation}
 and
    \begin{equation}
        \partial_s J^{(2)}_\k(s,t(c))\rightarrow -\infty \text{ as }s\downarrow t(c)\quad \text{and}\quad \partial_s J^{(2)}_\k(s,t(c))\rightarrow\infty \text{ as }s\uparrow 1-t(c)\label{eqn:J-sym-boundary-derivative}.
    \end{equation}
     Moreover, since taking the derivative in \eqref{eqn:Jst-ds} yields 
    \begin{align*}
        \partial_s^2 J^{(2)}_\k(s,t)
        &=\frac{1-\k}{s(1-s)}+\frac{\k (1-2t)}{2(s-t)(1-s-t)},
    \end{align*}
    we have
    \begin{equation}\label{eqn:Jst-dsds-to-W}
        {\sgn} (\partial_s^2 J^{(2)}_\k(s,t(c))) = {\sgn} (W_c(s)),\quad \forall s\in \cpr{t(c),1-t(c)},
    \end{equation}
    where
    $W_c(s)$ is the following degree-two polynomial with positive leading coefficient:
    \begin{align}
        W_c(s)&\coloneqq 2(1-\k)(s-t(c))(1-s-t(c))+\k s(1-s)(1-2t(c))\nonumber\\
        &=\cpr{\frac{3\k-c-4}{2}}\cpr{s-\frac{1}{2}}^2+\frac{(\k+c)(\k+c-\k c)}{8\k^2},\label{eqn:Wc}
    \end{align}
    where the last equality uses $t(c)=\frac{1}{4}(1-\frac{c}{\k}).$ Note that
    \begin{align}
    \cc < \k \mbox{ and } \k \geq 2  \quad  &\Rightarrow   \quad 
    \frac{3\k-c-4}{2} > 0, \label{eqn:firterm}  \\
    c \leq \frac{\k}{\k-1} \quad & \Leftrightarrow \quad W_c(\frac{1}{2})=\frac{(\k+c)(\k+c-\k c)}{8\k^2} \geq 0.\label{eqn:secterm}
    \end{align} 
    In addition, due to \eqref{eqn:tc-in-interior}, we have
    \begin{equation}
\label{eqn:thirdterm} 
    \cc \in (-\k,\k)   \quad \Rightarrow \quad    
    W_c(t(c))=W_c(1-t(c))>0.
    \end{equation}
    Therefore, when $\cc\leq \k/(\k-1)$, by \eqref{eqn:Jst-dsds-to-W}, \eqref{eqn:firterm} and \eqref{eqn:secterm}, it follows that
      \begin{align}
        \partial_s^2 J^{(2)}_\k(s,t(c))> 0,\quad  \forall s\in (t(c),1-t(c))\setminus \{\frac{1}{2}\}\quad \text{if }\cc\leq \frac{\k}{\k-1} \label{eqn:convexity-J2}
      \end{align}
    On the other hand, when $\cc>\k/(\k-1)$, \eqref{eqn:Jst-dsds-to-W}, \eqref{eqn:firterm}--\eqref{eqn:thirdterm}, and the symmetry of $W_c$ around $s=1/2$ implied by \eqref{eqn:Wc}, together yield the existence of $r$ with $\{\frac{1}{2}-r,\frac{1}{2}+r\}\subset (t(c),1-t(c))$ such that
\begin{equation}
    \sgn(\partial_s^2J^{(2)}_\k(s,t(c)))=\sgn\cpr{(s-\frac{1}{2}+r)(s-\frac{1}{2}-r)},\quad \forall s\in (t(c),1-t(c))\quad \text{ if }c>\frac{\k}{\k-1}\label{eqn:sgn-Jst-dsds}
\end{equation}

{\bf Note: } Properties \eqref{eqn:J2-conti-at-boundary}-- \eqref{eqn:sgn-Jst-dsds} above can and will also be used in {\em Case 2} 
    
    We now further divide Case 1 into further cases, depending on whether or not  $J_\k^{(2)}$ is  convex. \\
    
    {\em Case 1A:} When $\cc\leq \k/(\k-1)$, using \eqref{eqn:1/2-crit-pt}, together with \eqref{eqn:J2-conti-at-boundary},\eqref{eqn:J-sym-boundary-derivative} and \eqref{eqn:convexity-J2}, it follows that $s=\frac{1}{2}$ is the unique global minimizer of the map $[t(c),1-t(c)]\ni s\mapsto J^{(2)}_\k(s,t(c))$. Hence, $\optset_{edge}(\cc)=\pr{\pi_*}$, where $\pi_*=\pi[\frac{1}{2},t(c)]$. Next, \eqref{eqn:opti-pi-to-opti-rho} implies that $\optset(\cc)=\pr{\UGW_1(\mu^{(\pi_*)})}$. Then  Proposition \ref{prop:int_local_min_to_Gibbs}, the fact that $\nu=\mathsf{Unif}(\X)$ with Remark \ref{rmk:external-field-p}, and Lemma \ref{lemma:unimodular-extension-of-splitting-Gibbs-measure} together imply that $\UGW_1(\mu^{(\pi_*)})=\ising$ is a $\TIS$ Ising measure for some $\beta\in \R$ and $B=0$.
    
    We now identify this  Ising measure $\ising$. Note that by \eqref{eqn:vertex-marginal-of-pist}, the root magnetization of $\ising$ is 
    \begin{equation*}
        \bbE_{\UGW_1(\mu^{(\pi_*)})}[\pmb\sigma_o]=\bbE_{\pi[\frac{1}{2},t(c)]}[\bfX_o]=0.
    \end{equation*}
    By (1c) and (2c) of Lemma \ref{lemma:root-mag-consensus-for-ising}, we see that either $\ising = \ising_\k^*(\beta,0)$ in the ferromagnetic uniqueness regime, that is, when  $\beta \in (0,\betacrit]$,  or $\ising = \ising^\sharp_\k(\beta,0)$ otherwise, that is,  when $\beta \in (-\infty, 0) \cup (\betacrit, \infty)$.  Moreover, in all cases, the consensus $\cc$ is equal to  $\k \tanh(\beta)$, which implies that  $\beta=\beta(c) := \tanh^{-1}(\cc/\k)$.  A comparision  with \eqref{eqn:ferro-root-mag-consensus-unique-no-external}, \eqref{eqn:ferro-consensus-nonunique-no-external} and \eqref{eqn:antiferro-root-mag-consensus-no-external} 
        then yields the following implications: \\
        (i) when $0<c\leq \k/(\k-1)$,  $\beta$ lies in the interval $(0,\tanh^{-1}(1/(\k-1)))$, which, according to Definition \ref{def:Ising-on-tree}(1a), is contained in the uniqueness regime and thus $\ising=\ising_\k^*(\tanh^{-1}(\cc/\k,0))$. \\
        (ii) When $c<0$, $\beta(c) <0$, and thus $\ising=\ising^\sharp_\k(\tanh^{-1}(\cc/\k),0)$. \\
        Together with Proposition \ref{prop:gibbs cond}, these observations respectively prove 
        (3a) and the case $p = 1/2$ in (1) of
        Theorem \ref{thm:Ising-phase-transition}.  
     
     {\em Case 1B:} When $c> \k/(\k-1)$, 
    by \eqref{eqn:1/2-crit-pt} and \eqref{eqn:sgn-Jst-dsds}, we see that $\partial_sJ_\k^{(2)}(\frac{1}{2}-r,t(c))>0$ 
    and that $(t(c),1-t(c))\ni s\mapsto J^{(2)}_\k(s,t(c))$ has exactly two inflection points at $\frac{1}{2}-r$ and $\frac{1}{2}+r$. When combined with \eqref{eqn:J-sym-boundary-derivative} and the fact that $(t(c), 1-t(c)) \ni s \to \partial_s J^{(2)}_\k(s,t(c))$ is continuous, due to \eqref{eqn:J2-smooth}, this implies that there exists $s_*\in (t(c),\frac{1}{2}-r)$ such that $s_*$, and thus $1-s_*$ due to \eqref{eqn:J2s=J21-s}, are both critical points of the map $(t(c),1-t(c))\ni s\mapsto J^{(2)}_\k(s,t(c))$. Moreover, since any function with exactly two inflection points can have at most three critical points, we know that $s_*,\frac{1}{2},1-s_*$ are the only three critical points of $J^{(2)}_\k(s,t(c))$. Furthermore, \eqref{eqn:J2-conti-at-boundary} and \eqref{eqn:J-sym-boundary-derivative} imply that any local minimizer must lie in $(t(c),1-t(c))$, \eqref{eqn:sgn-Jst-dsds} implies that $s\mapsto J^{(2)}_\k(s,t(c))$ is strictly concave at $\frac{1}{2}$ and strictly convex at $s_*$ and $1-s_*$, and \eqref{eqn:J2s=J21-s} implies that the two local minimizers have the same value $J_\k^{(2)} (s_*, t(c))=J_\k^{(2)} (1-s_*, t(c))$. Therefore, $s_*$ and $1-s_*$ must correspond to the two global minimizers. Hence, we have $\optset_{edge}(\cc)=\pr{\pi[s_*,t(c)],\pi[1-s_*,t(c)]}$. Next, \eqref{eqn:opti-pi-to-opti-rho} implies that $\optset(\cc)=\pr{\UGW_1\cpr{\mu^{(\pi[s_*,t(c)])}},\UGW_1\cpr{\mu^{(\pi[1-s_*,t(c)])}}}$. Then Proposition \ref{prop:int_local_min_to_Gibbs}, the fact that $\nu=\mathsf{Unif}(\X)$,  Remark \ref{rmk:external-field-p}, and Lemma \ref{lemma:unimodular-extension-of-splitting-Gibbs-measure} together imply that $\UGW_1\cpr{\mu^{(\pi[s_*,t(c)])}}$ and $\UGW_1\cpr{\mu^{(\pi[1-s_*,t(c)])}}$ are certain $\TIS$ Ising measures with some $\beta\in \R$ and $B=0$.

     We now identify the Ising measures corresponding to the two global minimizers in $\optset(\cc)$. By \eqref{eqn:vertex-marginal-of-pist} and the fact that $s_*<\frac{1}{2}$, we know that these two $\TIS$ Ising measures have nonzero root magnetization.  
     By \eqref{eqn:ferro-root-mag-consensus-unique-no-external}, \eqref{eqn:ferro-consensus-nonunique-no-external} and \eqref{eqn:antiferro-root-mag-consensus-no-external} of Lemma 
     \ref{lemma:root-mag-consensus-for-ising}, we see that  $\beta>\tanh^{-1}(1/(\k-1))$ (otherwise there is no $\TIS$ Ising measure with nonzero root magnetization) and their corresponding Ising measures should be $\ising_\k^-(\beta,0)$ and $\ising_\k^+(\beta,0)$, respectively. Together with Proposition \ref{prop:gibbs cond}, this completes the proof of (3b) in Theorem \ref{thm:Ising-phase-transition}.

 {\em Case 2:} 
Suppose $\nu\neq \mathsf{Unif}(\X)$. 
In this case, without loss of generality, we assume that $p = \nu(1) <1/2$. For any $s\in (t(c),1-t(c))$, by \eqref{eqn:tc-in-interior}, we have $(s,t(c))\in int(\Delta)$, where $int(\Delta)$ is the interior of $\Delta$ defined in \eqref{eqn:Delta-interior}. Together with \eqref{eqn:Jnu-two-spin} and \eqref{eqn:Jst-ds}, it follows that
    \begin{align}
        \partial_s J^\nu_\k(s,t(c))&=\log\cpr{\frac{\nu(-1)}{\nu(1)}}+\partial_s J^{(2)}_\k(s,t(c))>\partial_s J^{(2)}_\k(s,t(c)),\label{eqn:J-p<1/2-first-derivative}\\
        \partial_s^2 J^\nu_\k(s,t(c))&=\partial_s^2 J^{(2)}_\k(s,t(c)).\label{eqn:J-p<1/2-second-derivative}
    \end{align}
    We once again split into two further cases depending on $\cc\leq \k/(\k-1)$ or $\cc>\k/(\k-1)$.
    
    {\em Case 2A:} Suppose $c\leq \k/(\k-1)$. Note that \eqref{eqn:J-p<1/2-first-derivative} and \eqref{eqn:1/2-crit-pt} imply that $\partial_sJ^\nu_\k(\frac{1}{2},t(c))>0$, \eqref{eqn:J-p<1/2-first-derivative} and \eqref{eqn:J-sym-boundary-derivative} imply that $\partial_s J^\nu_\k(s,t(c))\rightarrow -\infty$ as $s\downarrow t(c)$. Since $(t(c),\frac{1}{2})\ni s\mapsto \partial_sJ^\nu_\k(s,t(c))$ is continuous due to \eqref{eqn:J-p<1/2-first-derivative} and \eqref{eqn:J2-smooth}, this implies that $s\mapsto J^\nu_\k(s,t(c))$ has a critical point at $s_*\in (t(c),\frac{1}{2})$.
    By \eqref{eqn:J-p<1/2-second-derivative} with \eqref{eqn:convexity-J2}, and \eqref{eqn:Jnu-two-spin} with \eqref{eqn:J2-conti-at-boundary}, it follows that $s_*$ is the unique global minimizer of the map $[t(c),1-t(c)]\ni s\mapsto J^\nu_\k(s,t(c))$. Hence, $\optset_{edge}(\cc)=\pr{\pi_*}$, where $\pi_*=\pi[s_*,t(c)]$. In turn, \eqref{eqn:opti-pi-to-opti-rho} implies that $\optset(\cc)=\pr{\UGW_1\cpr{\mu^{(\pi_*)}}}$, and then Proposition \ref{prop:int_local_min_to_Gibbs}, Lemma \ref{lemma:unimodular-extension-of-splitting-Gibbs-measure} and Remark \ref{rmk:external-field-p} together imply that $\UGW_1\cpr{\mu^{(\pi_*)}}=\ising$ is a $\TIS$ Ising measure with some $\beta\in \R$ and $B=\log\cpr{\frac{\nu(1)}{\nu(-1)}}<0$.

    Next, in order to identify the Ising measure $\ising$ corresponding to the global minimizer $\UGW_1\cpr{\mu^{(\pi_*)}}$, first note that by \eqref{eqn:vertex-marginal-of-pist}, the root magnetization of $\ising$ is given by 
    \begin{equation*}
        \bbE_{\UGW_1\cpr{\mu^{(\pi_*)}}}[\pmb\sigma_o]= \bbE_{\pi[s_*,t(c)]}[\bfX_o]= 2s_*-1<0.
    \end{equation*}
    By (1b) and (2b) of Lemma \ref{lemma:root-mag-consensus-for-ising}, it follows that that $\ising=\ising_\k^-(\beta,B)$ for some $\beta>0$ when $\cc>\smref$, and $\ising=\ising_\k^\sharp(\beta,B)$ for some $\beta<0$ when $\cc<\smref$. In conclusion, $\optset(\cc)=\{\ising_\k^-(\beta,B)\}$ for some $\beta>0$ if $\smref<\cc\leq \k/(\k-1)$ and $\optset(\cc)=\{\ising_\k^\sharp(\beta,B)\}$ for some $\beta<0$ if $-\k<\cc<\smref.$

    {\em Case 2B}: When $c> \k/(\k-1)$, due to \eqref{eqn:J-p<1/2-first-derivative} and \eqref{eqn:J-sym-boundary-derivative}, if we let $s_*$ be the critical point of $s\mapsto J^{(2)}_\k(s,t(c))$ that is in $(t(c),\frac{1}{2})$ (as argued in {\em Case 1B}), then
    \begin{equation}
        \partial_s J^\nu_\k(s,t(c))\rightarrow -\infty \text{ as }s\downarrow t(c )\quad \text{and}\quad  \partial_sJ^\nu_\k(s_*,t(c))=\log\cpr{\frac{\nu(-1)}{\nu(1)}}>0.\label{eqn:Jnuds-downlimit}
    \end{equation}
    Therefore, the continuity of the map $(t(c),1-t(c))\in s\mapsto \partial_s J^\nu_\k(s,t(c))$ implies that there exists a critical point $\tilde{s}_*$ of $s\mapsto J^\nu_\k(s,t(c))$ in $(t(c),s_*)$. In fact, though $\tilde{s}_*$ might not be the unique critical point of $J^\nu_\k(s,t(c))$, it is the unique critical point that lies in $(t(c),\frac{1}{2}]$ because \eqref{eqn:J-p<1/2-second-derivative} and \eqref{eqn:sgn-Jst-dsds} imply $\sgn(\partial_s^2J^{\nu}_\k(s,t(c)))>0$, $\forall s\in (t(c),s_*)$, and \eqref{eqn:J-p<1/2-first-derivative}, the fact that $s_*$ is the unique critical point of $s\mapsto J^{(2)}_\k(s,t(c))$ in $(t(c),\frac{1}{2})$ and the continuity of $(t(c),\frac{1}{2})\ni s\mapsto \partial_s J^{(2)}_\k(s,t(c))$ imply that $\partial_sJ^\nu_\k(s,t(c))>\partial_sJ^{(2)}(s,t(c))\geq 0$, $\forall s\in [s_*,\frac{1}{2}]$.  Next, we argue by contradiction to show that any global minimizer $s'_*$ of $J^\nu_\k(s,t(c))$ must lie in $[t(c),\frac{1}{2}]$. Indeed, if this were not the case, then   by \eqref{eqn:Jnu-two-spin}, the fact that $J_\k^{(2)}(1-s'_*,t(c))=J_\k^{(2)}(s'_*,t(c))$, due to \eqref{eqn:J2s=J21-s}, together with the assumption $\nu(-1)>\nu(1)$ and $s'_*>1/2$, it follows that $J_\k^\nu(1-s'_*,t(c))<J_\k^\nu(s'_*,t(c))$, contradicting the fact that $s'_*$ is a global minimizer of $J^\nu_\k(s,t(c))$. Moreover, note that, by \eqref{eqn:Jnu-two-spin}, \eqref{eqn:J2-conti-at-boundary} and \eqref{eqn:Jnuds-downlimit}, $s=t(c)$ cannot be a global minimizer. This ensures that the critical point $\tilde{s}_*$ is in fact the unique global minimizer of $[t(c),1-t(c)]\ni s\mapsto J_\k^\nu(s,t(c))$. Hence, $\optset_{edge}(\cc)=\pr{\pi_*}$, where $\pi_*=\pi[\tilde s_*,t(c)]$. Next, \eqref{eqn:opti-pi-to-opti-rho} implies that $\optset(\cc)=\pr{\UGW_1\cpr{\mu^{(\pi_*)}}}$. Then Proposition \ref{prop:int_local_min_to_Gibbs}, Lemma \ref{lemma:unimodular-extension-of-splitting-Gibbs-measure} and Remark \ref{rmk:external-field-p} together imply that $\ising = \UGW_1\cpr{\mu^{(\pi_*)}}$ is a $\TIS$ Ising measure with some $\beta\in \R$ and $B=\log\cpr{\frac{\nu(1)}{\nu(-1)}}<0$.
    
    To identify the  Ising measure corresponding to $\ising$, note that by \eqref{eqn:vertex-marginal-of-pist}, the root magnetization of $\ising$ is:
    \begin{equation*}
        \bbE_{\UGW_1\cpr{\mu^{(\pi_*)}}}[\pmb\sigma_o]= \bbE_{\pi[\tilde s_*,t(c)]}[\bfX_o]= 2\tilde s_*-1<0.
    \end{equation*}
    Then (1b) and (2b) of Lemma \ref{lemma:root-mag-consensus-for-ising} imply that $\ising=\ising_\k^-(\beta,B)$ for some $\beta>0$ if $\cc>\smref$, and $\ising=\ising_\k^\sharp(\beta,B)$ for some $\beta<0$ if $\cc<\smref$. In conclusion, $\optset(\cc)=\{\ising_\k^-(\beta,B)\}$ for some $\beta>0$ if $\cc>\max\pr{\smref, \k/(\k-1)}$ and $\optset(\cc)=\{\ising_\k^\sharp(\beta,B)\}$ for some $\beta<0$ if $\smref>\cc>\k/(\k-1).$

    Combining the conclusions in {\em Case 2A} and {\em Case 2B}, we see that $\optset(\cc)=\{\ising_\k^-(\beta,B)\}$ for some $\beta>0$ if $\cc>\smref$ and $\optset(\cc)=\{\ising_\k^\sharp(\beta,B)\}$ for some $\beta<0$ if $\cc<\smref$. Together with Proposition \ref{prop:gibbs cond}, this completes the proof of Theorem \ref{thm:Ising-phase-transition} in the case when $c\in (-\k,\k)$ and $\nu\neq \mathsf{Unif}(\X)$. 
\end{proof}

\appendix

\section{From  minimizing edge marginals to $\TIS$ Gibbs measures}\label{sec:pf-of-TISGM-consistency}

\begin{proof}[Proof of] Lemma \ref{lemma:TISGM-to-cavity}]
    The one-to-one correspondence and the first two assertions of the lemma follow from \cite[Theorem 12.12 and Corollary 12.17]{georgii2011gibbs}. In particular, for any $\ell\in \Delta^*$, the probability distributions on $T^\k_r, r \in \N,$ defined using \eqref{eqn:marginal-splitting-Gibbs} form a consistent family of probability measures. By the Kolmogorov extension theorem, they 
    are the marginals of some $\rho\in \P\cpr{\X^{T^\k}}$. By Definition \ref{def:TISGM}, $\rho$ belongs to $\TISGM^{\psi,\bar\psi}_{T^\k}$.
    Therefore, the depth-$1$ marginal of this  $\TIS$ Gibbs measure $\rho$ is
    \begin{equation*}
        \pmb\rho_1(\tau)=\pmb\rho_{T_1}(\tau)=\frac{1}{Z_{T_1}} \bar{\psi}(x_o)\prod_{i=1}^\k\psi(x_o,x_i)\ell(x_i),\quad \forall \tau=(x_o,x_1,\ldots,x_\k)\in \X^{T^\k_1}. 
    \end{equation*}
    Using this along with the symmetry of $\psi$, interchanging the sum and product in the second line below and using $\ell\in \Delta^*$  in the third line below, we conclude that for any $x,z\in \X$,
    \begin{align*}
        \pi_{\pmb\rho_1}(x,z)&=\frac{1}{Z_{T_1}} \sum_{x_o,x_1,x_2,\ldots,x_\k \in \X}\bar{\psi}(x_o)\prod_{i=1}^\k\psi(x_o,x_i)\ell(x_i)\indf_{\pr{x_1=x,x_o=z}}\\
        &=\frac{1}{Z_{T_1}}\cpr{\psi(x,z)\ell(x)}\cdot\bar{\psi}(z)\cpr{\sum_{y\in \X}\psi(z,y)\ell(y)}^{\k-1}\\
        &=\frac{\zeta}{Z_{T_1}}\ell(x)\psi(x,z)\ell(z),
    \end{align*}
    where $\zeta$ is the normalizing constant in \eqref{eqn:cavity}. This proves  \eqref{eqn:edge-marginal-cavity}.
    
\end{proof}

\section{Proof of the unimodular extension property for $\TIS$ Gibbs measures}\label{sec:proof-unimodular-extension-of-splitting-Gibbs-measure}
In this section, we prove Lemma \ref{lemma:unimodular-extension-of-splitting-Gibbs-measure}. We start with an observation. Recall $\eta \in \P(\TkaXinf)$ from Remark \ref{remark:true-law} and its depth-$h$ marginal $\eta_h \in \P(\TkappaX{h})$ from  Definition \ref{def:rho_h}.

\begin{remark}
     For any $h \geq 1,$ together with Remark \ref{rmk: labeling scheme}, $\eta_h$ is a $\TISGM$ as defined in \eqref{eqn:marginal-splitting-Gibbs}, with $\psi\equiv 1, \bar{\psi}(x) = \nu(x),$  $\ell(x) = \nu(x),$ and the normalizing  constant   $Z_{T_r} = 1.$
    \label{rmk:eta-tis}
\end{remark}
\begin{proof}[Proof of Lemma \ref{lemma:unimodular-extension-of-splitting-Gibbs-measure}]

Fix any specification $(\psi, \bar{\psi})$. Using the definition of $\mu^{(\pi)}$ in \eqref{eqn:1MRF} and viewing both $\mu^{(\pi_{\pmb\rho_1})}$ and $\eta_1$ as measures on $\X^{T^\k_1}$ through the same Ulam-Harris-Neveu labeling (as in Remark \ref{rmk: labeling scheme}), we see that
\begin{equation*}
    \mu^{(\pi_{\pmb\rho_1})}(\tau)=\pi^o_{\pmb\rho_1}(x_o)\cdot \prod_{v=1}^\k \cpr{\dfrac{\pi_{\pmb\rho_1}(x_o,x_v)}{\pi_{\pmb\rho_1}^o(x_o)}},\quad \tau=(x_o,x_1,\ldots,x_\k)\in \X^{T^\k_1}.
\end{equation*}
Then given $\pmb\rho\in \TISGM^{\psi,\bar\psi}_{T^\k}$ with (homogeneous) boundary law $\ell\in \P(\X)$, using \eqref{eqn:edge-marginal-cavity} in the second line and \eqref{eqn:cavity} in the fourth line, we obtain that, for any $\tau=(x_o,x_1,\ldots,x_\k)\in \X^{T^\k_1}$, 
\begin{align*}
    \mu^{(\pi_{\pmb\rho_1})}(\tau)&=\pi_{\pmb\rho_1}(x_o,x_1)\cdot \prod_{v=2}^\k \cpr{\dfrac{\pi_{\pmb\rho_1}(x_o,x_v)}{\pi_{\pmb\rho_1}^0(x_o)}}\\
    &\propto \ell(x_o)\psi(x_o,x_1)\ell(x_1)\prod_{v=2}^\k \cpr{\dfrac{\ell(x_o)\psi(x_o,x_v)\ell(x_v)}{\sum_{z\in \X}\ell(x_o)\psi(x_o,z)\ell(z)}}\\
    &= \ell(x_o)\psi(x_o,x_1)\ell(x_1)\cdot \cpr{\sum_{z\in \X}\psi(x_o,z)\ell(z)}^{1-\k}\cdot\prod_{v=2}^\k \psi(x_o,x_v)\ell(x_v)\\
    &\propto \bar\psi(x_o)\prod_{v=1}^\k \psi(x_o,x_v)\ell(x_v).
\end{align*}
This proves \eqref{eqn:Gibbs-MRF-1}.

Next, let $\rho'\coloneqq \UGW_1(\pmb\rho_1)$, we use induction to show that for every $r\in \N$, $\rho'_r=\pmb\rho_r$. Note that $\rho'_1=\pmb\rho_1$ by definition. Suppose that $\rho'_r=\pmb\rho_r$ for some $r\in \N$, then since $\pmb\rho_1=\mu^{(\pi_{\pmb\rho_1})}$, the conditional measure on stars, defined in \eqref{eqn:PU}, becomes
\begin{align}
    \widehat{P}_{\pmb\rho_1}(z,z')(\tau)=E_1(z',z)(\tau\oplus z')\prod_{v\in N_{\tau}(o)}\pi_{\pmb\rho_1}^{1|o}(x_v|x_o),\quad  \tau\in \Tstarone^{\k-1}[\X], z,z'\in \X,\nonumber
\end{align}
and thus
\begin{align}
    \frac{d\widehat{P}_{\pmb\rho_1}(z,z')}{d\widehat{P}_{\eta_1}(z,z')}(\tau)=\prod_{v\in N_{\tau}(o)}\frac{d\pi_{\pmb\rho_1}^{1|o}}{d\eta_1^{1|o}}(x_v|x_o),\quad  \tau\in \Tstarone^{\k-1}[\X], z,z'\in \X,\label{eqn:update-TSIGM-ext}
\end{align}
Recall the definition of ancestor $a(v)$ from Section \ref{subsubsec:topology}. Using the definition of $\rho'_{r+1}$ in \eqref{muh} in the first line, the induction hypothesis and \eqref{eqn:update-TSIGM-ext} in the second line, the definition of $\pmb\rho_r$ in \eqref{eqn:marginal-splitting-Gibbs},  \eqref{eqn:edge-marginal-cavity}, and Remark \ref{rmk:eta-tis} in the third line, the fact that $\ell$ is a fixed point of the cavity map in \eqref{eqn:cavity} in the fourth line with the fact that the number of children for each vertex (except the root) is $\k-1$, and finally the definition of $\pmb\rho_{r+1}$ from  \eqref{eqn:marginal-splitting-Gibbs} and Remark \ref{rmk:eta-tis} in the fifth line, we obtain
\begin{align*}
    \frac{d\rho'_{r+1}}{d\eta_{r+1}}(\tau_{r+1})&=\frac{d\rho'_r}{d\eta_r}(\tau_r)\prod_{v_r\in \tau_{r}\setminus \tau_{r-1}}\frac{d\widehat{P}_{\pmb\rho_1}(x_{v_r},x_{p(v_r)})}{d\widehat{P}_{\eta_1}(x_{v_r},x_{p(v_r)})}(\tau_{r+1}(v_r\setminus p(v_r))_1)\\
    &=\frac{d\pmb\rho_r}{d\eta_r}(\tau_r)\prod_{v_r\in \tau_r\setminus\tau_{r-1}}\prod_{v_{r+1}\in N_{\tau_{r+1}}(v_r)\cap \tau_{r+1}}\frac{d\pi_{\pmb\rho_1}^{1|o}}{d\eta_1^{1|o}}(x_{v_{r+1}}|x_{v_r})\\
    &=\frac{1}{Z_{T^\k_r}}\prod_{\pr{u,v}\in E(T^\k_r)}\psi(x_u,x_v)\prod_{v_{r-1}\in T^\k_{r-1}} \frac{\bar\psi(x_{v_{r-1}})}{\nu(x_{v_{r-1})}}\prod_{v_r\in T^\k_r\setminus T^\k_{r-1}}\cpr{ \frac{\ell(x_{v_r})}{\nu(x_{v_r})}\prod_{\stackrel{v_{r+1}\in T^\k_{r+1}}{v_{r+1}\sim v_r}}\dfrac{\psi(x_{v_{r}},x_{v_{r+1}})\frac{\ell(x_{v_{r+1}})}{\nu(x_{v_{r+1}})}}{\sum_{z\in \X}\psi(x_{v_r},z)\ell(z)}}\\
    &=\frac{1}{Z'}\prod_{\pr{u,v}\in E(T^\k_r)}\psi(x_u,x_v)\prod_{v_{r-1}\in T^\k_{r-1}}\bar\psi(x_{v_{r-1}})\prod_{v_r\in T^\k_r\setminus T^\k_{r-1}}\cpr{\frac{\bar\psi(x_{v_r})}{\nu(x_{v_r})}\prod_{\stackrel{v_{r+1}\in T^\k_{r+1}}{v_{r+1}\sim v_r}}\psi(x_{v_r},x_{v_{r+1}})\frac{\ell(x_{v_{r+1}})}{\nu(x_{v_{r+1}})}}\\
    &=\frac{d\pmb\rho_{r+1}}{d\eta_{r+1}}(\tau_{r+1}),
\end{align*}
where $Z' = Z_{T_r}/\zeta$ with $\zeta$ given by \eqref{eqn:cavity}.
Finally, note that any measure $\rho\in\P(\Tstarone[\X])$ can be uniquely determined by its marginals $(\rho_r)_{r\in \N}$. This completes the proof.
    
\end{proof}

\section{Proof of the first reduction of the optimization problem}\label{section:kappa reg}

\begin{proof}[Proof of Lemma \ref{lemma:1MRF}]
From the definition of $\mup$ in \eqref{eqn:1MRF}, it is clear that the edge marginal $\pi_{\mup}=\pi$. Next, we show that $\mu^{(\pi)}$ is the unique minimizer of $\mu \mapsto H(\mu\|\eta_1)$ in the set $\{ \mu \in  \P(\TkappaX{1}): \pi_{\mu} = \pi\}$ by showing that for any $\mu\in \P(\TkappaX{1})$ such that $\pi_\mu=\pi$, one has \begin{equation}\label{eqn:relative-entropy-decomposition}
    H(\mu\|\eta_1)=H(\mu\|\mup)+H(\mup\|\eta_1)\geq H(\mup\|\eta_1),
\end{equation}
with a strict inequality unless $\mu=\mup.$
\par 

Let $\mu\in \P(\TkappaX{1})$ be such that $\pi_\mu=\pi$. We first show that $\mu\ll\mup$. Indeed, suppose that $(\tau,X)\in \TkappaX{1}$ and $\mup(\tau,X)=0$. Then  either $\piz(X_o)=0$ or $\pi(X_o,X_v)=0$ for some $v\in N_\tau(o).$ In the first case, $0=\piz(X_o)=\mu_o(X_o)\geq\mu(\tau,X)$. In the second case, $0=\pi(X_o,X_v)=\pi_\mu(X_o,X_v)=\frac{1}{\k}\bbE_\mu\sqpr{\sum_{u=1}^\k \indf_{\pr{\cpr{\bfX_o,\bfX_u}=\cpr{X_o,X_v}}}}\geq\frac{1}{\k}\mu(\tau,X).$
In either case, $\mu(\tau,X)=0$ and so we conclude $\mu\ll\mup$.

Using the fact that $\mu\ll\mup$ and the fact that $\eta_1$ has full support (since $\nu$ has full support), we can write 
\begin{align}
H(\mu\|\eta_1) = H(\mu\|\mup) + \E_\mu \sbrac{\logb{d\mup}{d\eta_1}}.
\label{kappa reg: mu star ineq}
\end{align}
Using the chain rule for relative entropy,   the form of $\mup$ in \eqref{eqn:1MRF},  the definition of $\eta_1$ from Remark \ref{remark:true-law}, and the fact that $\pi_\mu =\pi = \pi_{\mup}$, we obtain  
\begin{align*}
\E_\mu \sbrac{\logb {d\mup}{d\eta_1}} &= \E_{\piz}\sbrac{\logb{d\piz}{d\nu}} + \E_{\mup}\sbrac{\log\brac{{\prod_{v=1}^\kappa\frac{\pic(X_v\mid X_o)}{\nu(X_v)}}}}\\ &=\E_{\piz}\sbrac{\logb{d\piz}{d\nu}}+ \kappa \E_{\pi}\sbrac{\logb{\pic(X_1\mid X_o)}{\nu(X_1)}} \\
&= \E_{\mup} \sbrac{\logb {d\mup}{d\eta_1}}\\
     &= H (\mup \|\eta_1).
\end{align*}
When substituted back into \eqref{kappa reg: mu star ineq},  this yields  \eqref{eqn:relative-entropy-decomposition}.

Next, we show \eqref{eqn:J=I-mu-pi}. Using first the definition of $I^\k_1$ from \eqref{def-Ikk}, next the   definition of $\mup$  from \eqref{eqn:1MRF}, which implies 
$\pi_{\mup}=\pi$, and the definition of 
 $\eta_1$ from  Remark \ref{remark:true-law},  
 which implies $\pi_{\eta_1}=\nu \otimes \nu$,  and finally  the symmetry of $\pi$ in the fourth line, we conclude that
\begin{align*}
    I^\k_1(\mup)&=H(\mup\|\eta_1)-\frac{\k}{2}H(\pi_{\mup}\|\pi_{\eta_1})\\
    &=\bbE_{\mup}\sqpr{\log\dfrac{d\pi^o}{d\nu}(\bfX_o)+\sum_{v\in N_{\pmb\tau}(o)}\log\dfrac{d\pi^{1|o}}{d\nu}(\bfX_v|\bfX_o)}-\frac{\k}{2}H(\pi\|\nu\otimes\nu)\\
    &=H(\pi^o\|\nu)+\frac{\k}{2}\bbE_{\pi}\sqpr{2\log\dfrac{\pi(\bfX_1,\bfX_o)}{\nu(\bfX_1)\pi^o(\bfX_o)}-\log\dfrac{\pi(\bfX_1,\bfX_o)}{\nu(\bfX_1)\nu(\bfX_o)}}\\
    &=H(\pi^o\|\nu)+\frac{\k}{2}\bbE_{\pi}\sqpr{\log\dfrac{\pi(\bfX_1,\bfX_o)}{\pi^o(\bfX_1)\pi^o(\bfX_o)}}\\
    &=H(\pi^o\|\nu)+\frac{\k}{2}H(\pi\|\pi^o\otimes\pi^o).
\end{align*}
By \eqref{eqn:J-nu-k} this is equal to $J_\k^\nu(\pi).$  
    
\end{proof}

\section{Proof that minimizers achieve equality constraint}\label{sec:c=c'}

\begin{proof}[Proof of Lemma \ref{lemma:optimizing_c=c'}] 
Let $\cc > \sref.$  Clearly, we have    
    \[
    \qquad \min_{\cc' \geq \cc} \min_{\pi \in \BB_{h}(\cc')} J_\k^\nu(\pi) \leq  \min_{\pi \in \BB_{h}(\cc)} J_\k^\nu(\pi).\]  
    Suppose the minimum on the left-hand side of the last display is achieved for some  $\tilde{\pi} \in \BB_{h}(\cc')$ with  $\cc'> \cc.$ Since $\Exp_{\tilde{\pi}}[h(X_o, X_1)] = \cc' > \cc,$ there exists a small neighborhood around $\tilde\pi$ that lies completely inside the constraint set. Therefore, $\tilde{\pi}$ must be a local minimum (and thus stationary point) of the map 
    \begin{align}
     \Psym{\X} \ni    \pi \mapsto J_\k^\nu(\pi) \in \R. \label{eqn:pi-map}
    \end{align}
    Applying 
    \cite[Proposition 1.7]{DemMonSlySun14} 
     with $d$, $\Delta$, $\pmb{h}$, $\bar{h}$, $h$, $\bar{\psi}$ and $\psi$ therein corresponding to 
     $\k$, $\Psym{\X},$ $\pi$, $\pi^0$, $\ell$, $\nu$ and the constant function $1$, respectively, and, comparing \cite[equation (9)]{DemMonSlySun14} with \eqref{eqn:J-nu-k}, to conclude that  the functional $\Phi$ in \cite{DemMonSlySun14} corresponds to the map  $-J_\k^\nu$, we conclude that any stationary point $\tilde{\pi}$ 
     of $J_k^\nu$ takes the form 
     \[ \tilde{\pi} (x,z) \propto \ell (x) \ell(z), \quad x, z \in \X,  \]
    for some $\ell \in {\mathcal P}(\X)$ 
    that is a fixed point of the cavity map $\BP:\P(\X) \to \P(\X)$ defined in \eqref{eqn:cavity} with $(\bar\psi,\psi)=(\nu,1)$, that is,
    \begin{align*}
        \BP \hbar(x) = \frac{1}{Z}\nu(x) \cpr{ \sum_{x' \in \X}\hbar(x') }^{\k-1}, \quad x \in \X,
    \end{align*} 
    where $Z$ is the normalizing constant (note that the set of fixed points of the map $\BP$ is denoted by ${\mathcal H}^*$ in \cite{DemMonSlySun14}). 
     Since $\hbar \in {\mathcal P}(\X),$
    the above map reduces to the trivial map $\BP \hbar = \nu,$ which clearly has $\nu$ as its unique fixed point.   
        This proves that $\ell = \nu$ and hence, shows that $\tilde{\pi} = \nu \otimes \nu$ is the only local minimizer of the map in \eqref{eqn:pi-map}. However, since $c' > c > \sref$, by \eqref{eqn:ceta}, Remark \ref{remark:true-law}  and \eqref{eqn:Medge} it follows that  $\nu \otimes \nu \notin \BB_{h}(c'),$ which contradicts the initial supposition that $\tilde{\pi} \in \BB_{h}(c')$. Hence, every pair $(\cc',\tilde{\pi})$ that attains the minimum on the left-hand side of \eqref{eqn:=>1} must satisfy $\cc' = \cc$ and $\tilde{\pi} \in \BB_{h}(\cc).$ This proves \eqref{eqn:=>1}. An exactly analogous argument proves \eqref{eqn:=>2} when $\cc<\sref.$ 
\end{proof}

\section{Existence of boundary minimizers}
\label{sec:boundaryminimizers}

\subsection{Justification of Example \ref{example:boundary-local-minimizers}}\label{subs:pf-boundary-local-minimizers}
    For $q \in \N$, define 
  $J_\k^{(q)}\coloneqq J_\k^{\mathsf{Unif}([q])}$, which by 
\eqref{eqn:J-nu-k} is given by  
\begin{align}\label{eqn:J-qspin}
    J_\k^{(q)}(\pi) =\log q+(\k-1)H(\pi^o)-\frac{\k}{2}H(\pi),\quad \forall \pi\in \Psym{\X}, 
\end{align}
where $H$ is the entropy functional defined in \eqref{defn:entropy}.
    We first show that $\pi_*$ is indeed a  local minimizer by computing the change in $J^{(q)}_\k$ in all directions satisfying the constraint. In fact, any such perturbation of $\pi^*$ in a feasible direction  can be obtained as an  interpolation of $\pi_*$ and some $\xi\in \BB_h(\cc)$. Therefore, fix $\BB_h (\cc)$ and let $\varepsilon > 0$ be sufficiently small such that $\pi_\varepsilon\coloneqq (1-\varepsilon)\pi_*+\varepsilon\xi\in \BB_h(\cc)$. Then by Lemma \ref{lemma:eloge-derivative} and the form of $\pi_*$ given above, we have
\begin{align}
    \lim_{\varepsilon\rightarrow 0^+}\frac{1}{|\varepsilon\log \varepsilon|}\cpr{J_\k^{(q)}(\pi_\varepsilon)-J_\k^{(q)}(\pi_*)}&=(\k-1)\sum_{x\in [q]}\xi^o(x)\indf_{\pr{\pi^o_*(x)=0}}-\frac{\k}{2}\sum_{x,z\in [q]}\xi(x,z)\indf_{\pr{\pi_*(x,z)=0}}\nonumber\\
    &=(\k-1)\xi^o(q)-\frac{\k}{2}\cpr{\xi(q,q)+2\sum_{x\in [q-1]}\xi(q,x)}.\label{eqn:eloge-derivative-1}
\end{align}
Since any $\xi\in \BB_h(\cc)$ satisfies the constraint and is a probability measure, it follows that 
\begin{align}
    \xi(q,q)-2\sum_{x\in [q-1]}\xi(q,x)+\frac{1}{2}\sum_{x,z\in [q-1]}\xi(x,z)   &=\frac{1}{2} \nonumber\\
    \xi(q,q)+2\sum_{x\in [q-1]}\xi(q,x)+\sum_{x,z\in [q-1]}\xi(x,z)&=1.\nonumber
\end{align}
This implies that there exists some $p\in [0,1/8]$  such that  
\begin{equation}
    \cpr{\xi(q,q),\sum_{x\in [q-1]}\xi(q,x),\sum_{x,z\in [q-1]}\xi(x,z)}=\cpr{6p,p,1-8p}.\label{eqn:ex-constraint}
\end{equation}
In particular, this implies $\xi^0(q) = 7p$, which together with \eqref{eqn:eloge-derivative-1} and the fact that $\k\geq 3$, we have for $p>0,$
\begin{align}
    \lim_{\varepsilon\rightarrow 0^+}\frac{1}{|\varepsilon\log \varepsilon|}\cpr{J_\k^{(q)}(\pi_\varepsilon)-J_\k^{(q)}(\pi_*)}=3\k p-7p\geq 2p>0.\label{eqn:eloge-derivative-2}
\end{align}
If $p=0$, we have $\xi^o(q)=0$. Note that the constraint \eqref{eqn:ex-constraint}  is automatically satisfied for any measure $\pi\in \Psym{[q-1]}$. Therefore, by \eqref{eqn:unique-minimizer-of-J}, we know that $\pi_*|_{\Psym{[q-1]}}$, which is the uniform measure on $[q-1]^2$, is the unique global minimizer of $J_\k^{(q-1)}$. Moreover, for any $\pi\in \Psym{[q]}$ that puts no mass on spin value $q$, we have 
\begin{equation*}
    J_\k^{(q)}(\pi)=J_\k^{(q-1)}(\pi)+\log\frac{q}{q-1}.
\end{equation*}
As a result,
\begin{equation*}
    \pr{\pi_*}=\argmin \pr{J_\k^{(q)}(\pi):\pi \in \Psym{[q]}, \pi^o(q)=0}.
\end{equation*}
Together with \eqref{eqn:eloge-derivative-2}, we have shown that $\pi_*$ is a  local minimizer.
\qed

\subsection{Boundary global minimizers when $\nu$ is not uniform}\label{sec:degenerate}

In this section, we first illustrate the transition for global minimizers from being in the interior to becoming on the boundary using a particular example in Section \ref{subs:heuristic-boundary-global}. Then we justify Example \ref{example:boundary-global-minimizer} rigorously in Section \ref{subs:pf-boundary-global-minimizers}. 

\subsubsection{Heuristics for establishing  existence of degenerate global minimizers}\label{subs:heuristic-boundary-global}

Assume that $\X = \{1,-1\}$ and use the parametrization $\pi=\pi[s,t]$ in \eqref{eqn:two-spin-parametrization},   $J^\nu_\k(s,t)$ as in \eqref{eqn:J-in-st} and, for edge potentials satisfying 
$h(1,1)+h(-1,-1)-2h(1,-1)\neq 0$, 
the constraint segment  
$\BB_h(s, t) = \{t = w(h)s: (s,t) \in \Delta\}$ as defined in 
\eqref{eqn:two-spin-slope}. 
We plot the contours of $J^\nu_\k(s,t)$ in Figure \ref{fig:non-uniform-section-degenerate-heatmap} for the case 
 $\nu(-1)=2/3$, $\nu(1)=1/3$ and 
$\kappa = 5$, with each dashed line representing the constraint segment $t = w(h)s$ for some edge potential $h$ and constraint value $\cc=\k h(-1,-1)$. 
An inspection of 
Figure \ref{fig:non-uniform-section-degenerate-section}, which  contains the corresponding 
  plots of the function  $s\mapsto J^\nu_5(s,w(h)s)$, 
   suggests the following: (i)  there exists a critical slope $w^*$ such that when $w(h)<w^*$, $s=0$ is the unique global minimizer of $s\mapsto J^\nu_5(s,w(h)s)$, which implies that $\pi[0,0]=\delta_{(-1,-1)}$ is the unique global minimizer of the edge optimization problem; (ii) when $w(h)=w^*$, there are exactly two global minimizers, with one lying in the interior and one lying on the boundary; (iii) When $w(h)>w^*$, the global minimizer is unique and lies in the interior.  Below, we make rigorous these observations to  show the existence of the boundary global minimizer in Example \ref{example:boundary-global-minimizer}.

\begin{figure}[htbp]
  \centering

  \begin{subfigure}[t]{0.46\textwidth}
    \centering
    \includegraphics[width=\linewidth,height=5.5cm]{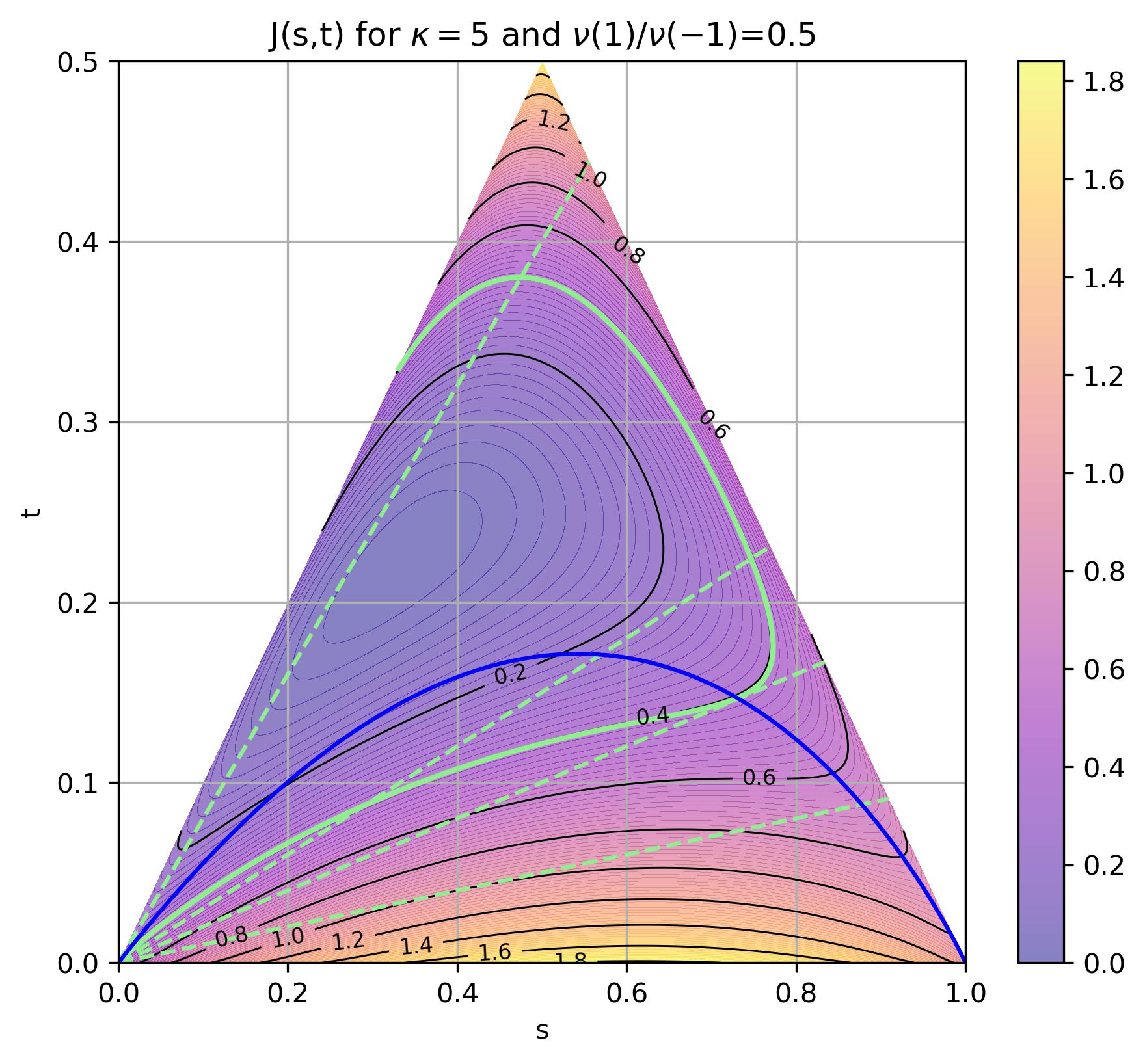}
    \caption{Contour plot for $J^\nu_5(s,t)$:  green solid contour represents  $\{(s,t):J^\nu_5(s,t)=J^\nu_5(0,0)\}$; green dashed lines represent different constraint segments.}
    \label{fig:non-uniform-section-degenerate-heatmap}
  \end{subfigure}
  \hspace{0.5cm}
  \begin{subfigure}[t]{0.41\textwidth}
    \centering
    \includegraphics[width=\linewidth,height=5.5cm]{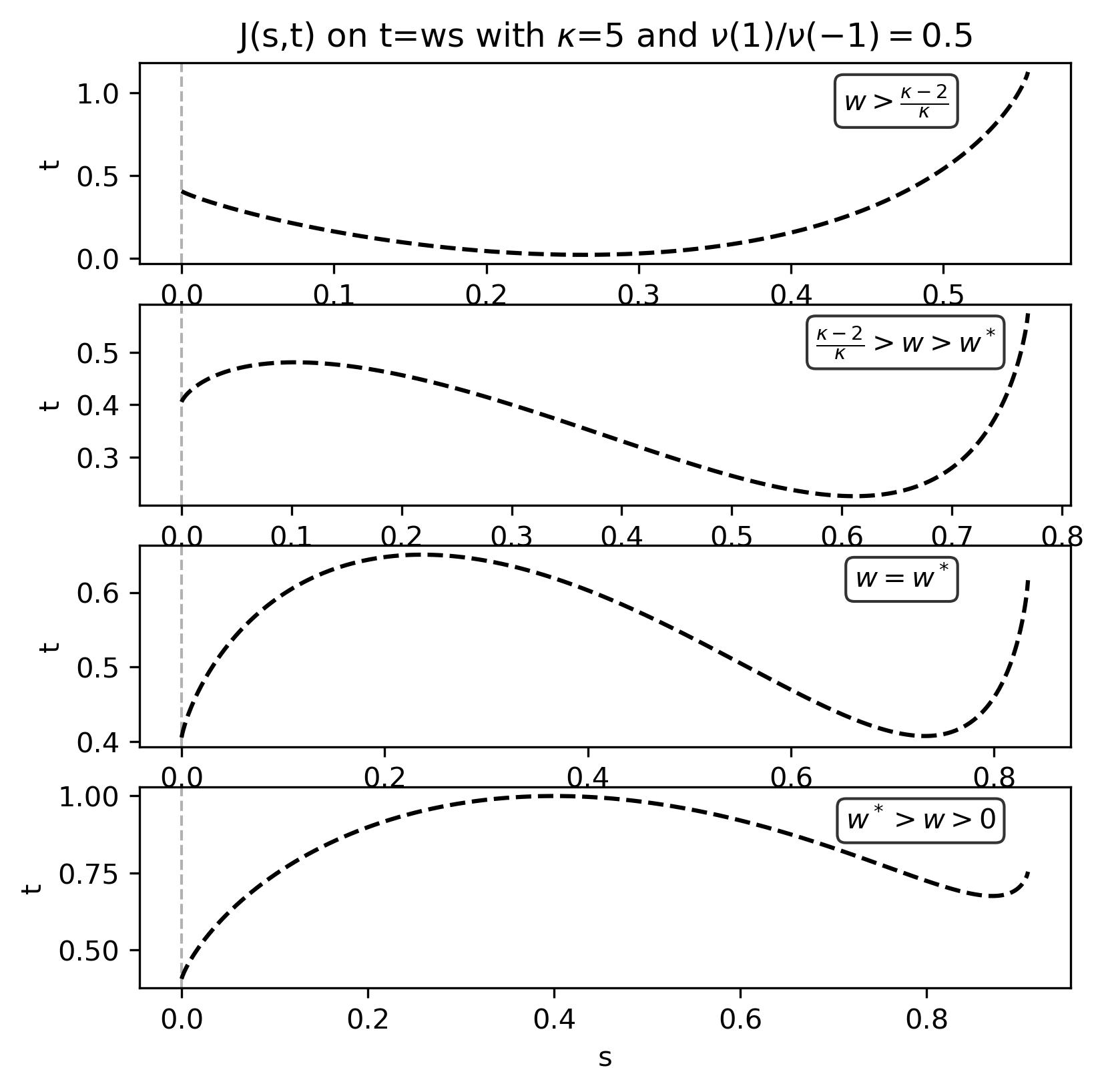}
    \caption{Value of $J^\nu_5(s,t)$ along the  constraint segments $t=ws$}
    \label{fig:non-uniform-section-degenerate-section}
  \end{subfigure}

  \caption{$J^\nu_5(s,t)$ with non-uniform mark distribution $\nu (1)  = 1/3, \nu(-1)  = 2/3$}
  \label{fig:non-uniform-section-degenerate}
\end{figure}

\vspace{-0.05in}

\subsubsection{Justification of Example \ref{example:boundary-global-minimizer}}\label{subs:pf-boundary-global-minimizers}

Define 
\begin{equation}
    w^*\coloneqq \inf \pr{w: \exists s\in (0,1] \text{ such that }(s,ws)\in \Delta \text{ and }J^\nu_\k(s,ws)\leq J^\nu_\k(0,0)}.\label{eqn:crit-phase-transition}
\end{equation}
We claim that in the setting of $\k=5$ and $\nu(1)=1/3$, $\nu(-1)=2/3$, we have $w^*=1/5$. Then, with $h$ as defined in \eqref{eqn:boundary-global-h}, by \eqref{eqn:two-spin-slope}, the slope $w(h)$, defined in \eqref{eqn:two-spin-slope},
is equal to
\begin{equation*}
    w(h)=\dfrac{7-4}{7+4-2\cdot(-5)}=\frac{1}{7}<\frac{1}{5}.
\end{equation*}
Therefore, $s=0$ is the unique global minimizer of the map $s\mapsto J^\nu_\k(s,w(h)s)$, otherwise the existence of another $s'\neq 0$ with $J^\nu_\k(s',w(h)s')\leq J^\nu_\k(0,0)$ would contradict our claim that $w^*=1/5$. As a result, $\pi_*=\pi[0,0]=\delta_{(-1,-1)}$ is the unique global minimizer of the edge optimization problem in this setting.

Below, we prove our claim that $w^*=1/5$ by showing that the minimal value of $J^\nu_\k(s,t)$ in the regime $D\coloneqq \pr{(s,t)\in \Delta: t\leq s/5}$ is $J^\nu_\k(0,0)$, achieved only at two points $(0,0)$ and $(4/5,4/25)$, that is
\begin{equation}
    \argmin\pr{J^\nu_\k(s,t):(s,t)\in D}=\pr{(0,0),(4/5,4/25)}.\label{eqn:optimizer-for-D}
\end{equation}
Note that the $\argmin$ is well-defined since $J^\nu_\k$ is a continuous function on the compact set $D$. To show \eqref{eqn:optimizer-for-D}, we 
partition $D$ into four regimes and separately analyze  the minimizers in each: 
    \begin{enumerate}
        \item $D_0\coloneqq D\cap \pr{(s,t)\in \Delta: s\neq 0, t\neq s/5, 1-s-t\neq 0}$:  We show below that $J^\nu_\k$ has no critical  point, and hence no local minimizer,      in $D_0$.  Note that  \eqref{eqn:Jnu-two-spin} and \eqref{eqn:Jst-dt} imply 
    \begin{equation*}
        \partial_t J^\nu_\k(s,t)=\partial_t J^{(2)}_\k(s,t)=0\quad \text{if and only if}\quad t=s(1-s).
    \end{equation*}
    Thus, using \eqref{eqn:Jnu-two-spin} and \eqref{eqn:Jst-ds} in the first line, and then substituting $t=s(1-s)$, we have
    \begin{align*}
        \partial_s J^\nu_\k(s,t)\mid_{t=s(1-s)}&=\log\cpr{\frac{\nu(-1)}{\nu(1)}}+\partial_s J^{(2)}_\k(s,t)\mid_{t=s(1-s)}\\
        &=\log\cpr{\frac{\nu(-1)}{\nu(1)}}+(1-\k)\log\cpr{\frac{s}{1-s}}+\frac{\k}{2}\log\cpr{\frac{s^2}{(1-s)^2}}\\
        &=\log\cpr{\frac{\nu(-1)}{\nu(1)}}+\log\cpr{\frac{s}{1-s}},
    \end{align*}
    which is zero if and only if $s=\nu(1)$. This shows that $\nabla J^\nu_\k(s,t)=(0,0)$ if and only if $(s,t)=(\nu(1),\nu(1)\nu(-1))$ and $(\nu(1),\nu(1)\nu(-1))=(1/3,2/9)\notin D$.
        \item $D_1\coloneqq D\cap \pr{(s,t)\in \Delta: 1-s-t=0,s\neq 1}$: For any point $(s,t)\in D_1$, pick the constraint segment $\Gamma$ that passes through $(0,0)$ and $(s,t)$. Then by Lemma \ref{lemma:pos_marginal_cannot_be_locmin}, $\pi[s,t]$ cannot be a local minimizer of the edge optimization problem corresponding to $\Gamma$. Together with the fact that $\Gamma\subset D$, it follows that $(s,t)$ cannot attain the minimal value of $J^\nu_\k(s,t)$ on $D$;
        \item $D_2\coloneqq D\cap \pr{(s,t)\in \Delta:t=0}$: For any point $(s,0)\in D_2$, using \eqref{eqn:Jnu-two-spin} in the first line and the fact that entropy is nonnegative in the second line, we have
        \begin{align*}
            J^\nu_\k(s,0)&=(s+1)\log 2-\log\cpr{\frac{4}{3}}-\frac{3}{2}\cpr{s\log s+(1-s)\log(1-s)}\\
            &\geq \log 2-\log\cpr{\frac{4}{3}},
        \end{align*}
    with equality attained if and only if $s=0$. This implies that the unique global minimizer of $J^\nu_\k$ in $D_2$ is $(0,0)$;
        \item $D_3\coloneqq D\cap \pr{(s,t)\in \Delta:t=s/5}$: By direct computation,  for $s\in (0,5/6)$ we have 
        \begin{align*}
            \frac{d}{ds}J^\nu_\k(s,\frac{1}{5}s)&=-3\log 2-\log s+4\log(1-s)-3\log(5-6s);\\
            \frac{d^2}{ds^2}J^\nu_\k(s,\frac{1}{5}s)&=\frac{1}{s(1-s)(5-6s)}\cpr{9s-5}.
        \end{align*}
        Thus $s=4/5$ is a local minimizer since the above implies 
        \begin{equation*}
            \frac{d}{ds}J^\nu_\k(s,\frac{1}{5}s)\mid_{s=\frac{4}{5}}=0\quad \text{and}\quad \frac{d^2}{ds^2}J^\nu_\k(s,\frac{1}{5}s)\mid_{s=\frac{4}{5}}>0.
        \end{equation*}
        Moreover, since $(0,5/6)\ni s\mapsto J^\nu_\k(s,\frac{1}{5}s)$ is a smooth function that has a unique inflection point at $s=5/9$,   $s=4/5$ must be the unique local minimizer in $(0,5/6)$. Furthermore, since $s\in [0,5/6]$ for every $(s,t)\in D_3$ and $J^\nu_\k(5/6,1/6)>J^\nu_\k(4/5,4/25)=J^\nu_\k(0,0)$, we conclude that $(0,0)$ and $(4/5,4/25)$ are the only two global minimizers in $D_3$.
        
    \end{enumerate}
    \qed

    \begin{remark}
         From Figure \ref{fig:non-uniform-section-degenerate},  note that the critical slope should coincide with the tangent line of the level curve passing through $(0,0)$, that is, 
        \begin{equation*}
            w^*=\inf\pr{\frac{t}{s}:J^\nu_\k(s,t)=J^\nu_\k(0,0),s\neq 0}.
        \end{equation*}
        However, showing this requires more delicate computations and is not the main focus of the above  justification, so we proceeded with a slightly less intuitive but simpler argument.
    \end{remark}

\section{An example of a ferromagnetic Ising measure with a consensus that is lower than typical}\label{sec:example-ferro-ising-lower-consensus}

In the ferromagnetic uniqueness regime, the consensus of the unique Ising measure is always higher than typical. However, this is not always true in the  non-uniqueness regime. Although the ferromagnetic Ising measure with boundary condition having the same bias as  the reference measure $\nu$ still leads to a higher consensus, the Ising measure with the opposite boundary condition could result in a consensus that is lower than atypical. Below, we demonstrate this phenomenon via  a numerical example.  

\begin{example}\label{example:ferro-lower-consensus}
   Set $\k=6$, $(\beta,B)=(1.75,5.4)$. Then $h^-\approx -3.22177$ is a fixed point of the cavity map $\Gamma$,  and by  \eqref{eqn:ising-covariance},
    \begin{equation*}
        \bbE_{\ising_\k^-(\beta,B)}\sqpr{\sum_{v=1}^\k m(X_o,X_v)}=\k\cdot \dfrac{e^{2\beta}\cosh(2h^-)-1}{e^{2\beta}\cosh(2h^-)+1}\approx 5.99884722<5.9995104 \approx \k\cdot \tanh^2(B)=\smref.
    \end{equation*}
\end{example}

\bibliographystyle{plain}
\bibliography{biblio}

\scriptsize{\textsc{Ivan Lee: Department of Statistics and Data Science, Yale University, New Haven CT 06511, USA\newline}}
email: \texttt{ivan.lee@yale.edu}

\scriptsize{\textsc{I-Hsun Chen: Division of Applied Mathematics, Brown University, Providence RI 02912, USA\newline}}
email: \texttt{i-hsun\_chen@brown.edu}

\scriptsize{\textsc{Kavita Ramanan: Division of Applied Mathematics, Brown University, Providence RI 02912, USA\newline}}
email: \texttt{kavita\_ramanan@brown.edu}

\scriptsize{\textsc{Sarath Yasodharan: Department of Industrial Engineering and Operations Research, Indian Institute of Technology Bombay, Powai, Mumbai 400076, India\newline}}
email: \texttt{sarath\_yasodharan@iitb.ac.in}

\end{document}